\DeclareMathAlphabet\oldmathcal{OMS}        {cmsy}{b}{n}
\SetMathAlphabet    \oldmathcal{normal}{OMS}{cmsy}{m}{n}
\DeclareMathAlphabet\oldmathbcal{OMS}       {cmsy}{b}{n}
\newtheorem{theorem}{Theorem}[section]
\newtheorem{lemma}[theorem]{Lemma}
\newtheorem{proposition}[theorem]{Proposition}
\newtheorem{corollary}[theorem]{Corollary}
\newtheorem{def/prop}[theorem]{Definition/Proposition}
\theoremstyle{definition}
\newtheorem{definition}[theorem]{Definition}
\newtheorem{remark}[theorem]{Remark}
\newtheorem*{ack}{Acknowledgements}
\newtheorem{example}{Example}[section]
\DeclareSymbolFont{bbold}{U}{bbold}{m}{n}
\DeclareSymbolFontAlphabet{\mathbbold}{bbold}
\def\BOne{\mathchoice{\scalebox{1.16}{$\displaystyle\mathbbold 1$}}{\scalebox{1.16}{$\textstyle\mathbbold 1$}}{\scalebox{1.16}{$\scriptstyle\mathbbold 1$}}{\scalebox{1.16}{$\scriptscriptstyle\mathbbold 1$}}}
\def\fract#1#2{\raise4pt\hbox{$ #1 \atop #2 $}}
\def\bbc{{\mathbb C}}
\def\bbn{{\mathbb N}}
\def\bbp{{\mathbb P}}
\def\bbq{{\mathbb Q}}
\def\bbr{{\mathbb R}}
\def\bbt{{\mathbb T}}
\def\bbz{{\mathbb Z}}
\def\gra{\alpha}
\def\grb{\beta}
\def\grg{\gamma}
\def\gri{\iota}
\def\grk{\kappa}
\def\gro{\omega}
\def\grz{\zeta}
\def\grD{\Delta}
\def\grG{\Gamma}
\def\grL{\Lambda}
\def\grO{\Omega}
\def\bfl{{\bf l}}
\def\bfm{{\bf m}}
\def\bfv{{\bf v}}
\def\bfw{{\bf w}}
\def\bfV{{\bf V}}
\def\bfS{{\bf S}}
\def\cala{{\mathcal A}}
\def\calc{{\mathcal C}}
\def\cald{{\mathcal D}}
\def\cale{{\mathcal E}}
\def\calf{{\mathcal F}}
\def\calg{{\mathcal G}}
\def\calh{{\mathcal H}}
\def\cali{{\mathcal I}}
\def\calm{{\mathcal M}}
\def\calo{{\mathcal O}}
\def\calp{{\mathcal P}}
\def\calq{{\mathcal Q}}
\def\calr{{\mathcal R}}
\def\cals{{\oldmathcal S}}
\def\calw{{\mathcal W}}
\def\calz{{\mathcal Z}}
\def\calS{{\mathcal S}}
\def\gh{{\mathfrak h}}
\def\gr{{\mathfrak r}}
\def\gt{{\mathfrak t}}
\def\gu{{\mathfrak u}}
\def\gz{{\mathfrak z}}
\def\gA{{\mathfrak A}}
\def\gC{{\mathfrak C}}
\def\gR{{\mathfrak R}}
\def\tTheta{\tilde{\Theta}}
\def\lcm{{\rm lcm}}
\def\ttheta{\tilde{\theta}}
\def\gtz{\tilde{\mathfrak z}}
\def\<{\langle}
\def\>{\rangle}
\def\ra#1{\to}
\def\g{\gamma}
\def\fract#1#2{\raise4pt\hbox{$ #1 \atop #2 $}}
\def\decdnar#1{\phantom{\hbox{$\scriptstyle{#1}$}}
\left\downarrow\vbox{\vskip15pt\hbox{$\scriptstyle{#1}$}}\right.}
\def\lcm{{\rm lcm}}
\def\hook{\mathbin{\hbox to 6pt{%
                 \vrule height0.4pt width5pt depth0pt
                 \kern-.4pt
                 \vrule height6pt width0.4pt depth0pt\hss}}}
\begin{document}

\title{The $S^3_\bfw$ Sasaki Join Construction}

\author[Charles Boyer]{Charles P. Boyer}
\address{Charles P. Boyer, Department of Mathematics and Statistics,
University of New Mexico, Albuquerque, NM 87131.}
\email{cboyer@math.unm.edu} 
%\author[David Calderbank]{David M. J. Calderbank}
%\address{David M. J. Calderbank, Department of Mathematical Sciences,
%University of Bath, Bath BA2 7AY, UK}
%\email{D.M.J.Calderbank@bath.ac.uk}
\author[Christina T{\o}nnesen-Friedman]{Christina W. T{\o}nnesen-Friedman}
\address{Christina W. T{\o}nnesen-Friedman, Department of Mathematics, Union
College, Schenectady, New York 12308, USA }
\email{tonnesec@union.edu}
\thanks{The authors were partially supported by grants from the
Simons Foundation, CPB by (\#519432), and CWT-F by (\#422410)}
%\keywords{generalized Calabi construction, extremal K\"ahler metrics}
%\subjclass[2000]{Primary: 53D42; Secondary:  53C25}
%\date{\today}

\begin{abstract}
The main purpose of this work is to generalize the $S^3_\bfw$ Sasaki join construction $M\star_\bfl S^3_\bfw$ described in the authors’ 2016 paper when the Sasakian structure on $M$ is regular, to the general case where the Sasakian structure is only quasi-regular. This gives one of the main results, Theorem \ref{admjoincsc}, which describes an inductive procedure for constructing Sasakian metrics of constant scalar curvature. In the Gorenstein case ($c_1(\cald)=0$) we construct a polynomial whose coeffients are linear in the components of $\bfw$ and whose unique root in the interval $(1,\infty)$ completely determines the Sasaki-Einstein metric. In the more general case  we apply our results to prove that there exists infinitely many smooth 7-manifolds each of which admit infinitely many inequivalent contact structures of Sasaki type admitting constant scalar curvature Sasaki metrics (see Corollary \ref{infcon}). We also discuss the relationship with a recent paper of Apostolov and Calderbank as well as the relation with K-stability.

\end{abstract}

\maketitle
\vspace{-7mm}

%\addtocontents{toc}{\protect\setcounter{tocdepth}{0}}

%\tableofcontents

\section*{Introduction}\label{intro}
It is often said that Sasaki geometry is the odd dimensional version of K\"ahler geometry; nevertheless, there are substantial differences. The most obvious difference concerns products. The product of K\"ahler manifolds (orbifolds) is K\"ahler. This simple formulation does not occur in Sasaki geometry for dimensional reasons. The closest analogue is the so-called Sasaki join operation which amounts to a product of the corresponding transverse K\"ahler structures and which by now has been developed in some detail \cite{BG00a,BGO06,BG05,HeSu12b,BoTo14a,BHLT16}. However, the Sasaki join is much richer as we shall now discuss. The K\"ahler and Sasaki cones are analogous in the sense that they provide a family of K\"ahler (Sasaki) structures that are associated to a fixed underlying complex, (respectively, CR) structure. They are, however, very different in nature. The K\"ahler cone is a cone in the vector space of 2-dimensional de Rham cohomology classes; whereas, the Sasaki cone consists of nowhere vanishing Killing vector fields. Now consider the following examples. First, let $(N,\gro_N)$ be a K\"ahler-Einstein manifold or orbifold and consider the product with the standard complex projective line $(\bbc\bbp^1,\gro_{FS})$ with its Fubini-Study metric. Then there is a choice of positive integers $l_1,l_2$ such that $(N\times \bbc\bbp^1,l_1\gro_N+l_2\gro_{FS})$ is K\"ahler-Einstein. A similar analogue holds for the Sasaki join $M\star_\bfl S^3$. Now if we replace $(\bbc\bbp^1,\gro_{FS})$ by a weighted projective line $(\bbc\bbp^1[\bfw],\gro_{ext})$ with an extremal K\"ahler orbifold metric, there is no pair of integers $l_1,l_2$ nor deformation in the K\"ahler cone that produces a K\"ahler-Einstein orbifold metric on the product $N\times \bbc\bbp^1[\bfw]$. However, as shown explicitly in \cite{BoTo14a} when $M$ is a regular Sasaki-Einstein manifold, for each weight vector $\bfw=(w_1,w_2)$ there are positive integers $l_1,l_2$ such that the corresponding Sasaki join $M\star_\bfl S^3_\bfw$ admits a Sasaki-Einstein metric in the Sasaki cone. The purpose of this paper is to generalize the study of the $S^3_\bfw$ join construction in \cite{BoTo14a} to the case where $M$ in $M_{\bfl,\bfw}=M\star_\bfl S^3_\bfw$ is an arbitrary quasi-regular Sasaki manifold or orbifold. This has important consequences for GIT stability in the Sasaki category that does not hold in the K\"ahler category. Namely, in a certain sense the $S^3_\bfw$ Sasaki join operation preserves K-stability for all weight vectors $\bfw$. This amounts to the join of an unstable, but relatively K-stable manifold, producing a K-stable Sasaki manifold.

The complexity of the multiplicative structure described above, the $l$-monoid, adds to the complexity of the moduli space of Sasakian structures which increases drastically with dimension. In dimension 3 things are well understood and there is a classifcation of Sasakian structures \cite{Bel01,BG05}. In dimension 5 it is already much more complex. See Chapter 10 of \cite{BG05} and Section 6 of \cite{Boy18} for a recent update. There is, however, a classification of simply connected compact 5-manifolds due to Smale and Barden, not all of which admit Sasakian structures. Precisely which ones do and which don't is still an open question. Reducibility essentially begins in dimension 5 where the so-called cone decomposable Sasakian structures are lens space bundles over Riemann surfaces \cite{BoTo13,BHLT16}. See Corollary \ref{5cor} below. In dimension 7 things become yet even more complex, since there are many irreducible Sasaki 5-manifolds. Given this situation it seems prudent to understand which Sasakian structures can be built up from lower dimensional ones and how stability propagates. This is precisely what the join accomplishes. 

\begin{ack}
We thank Vestislav Apostolov and David Calderbank for many very helpful discussions and their interest in our work. We also thank the anonymous referees for their careful reading of our manuscript, and especially for catching an error in an earlier version of Proposition \ref{reglem} and for clarifications in Section 5. 
\end{ack}

\section{Brief Review of Sasaki Geometry}
Recall that a Sasakian structure on a contact manifold $M^{2n+1}$ of dimension $2n+1$ is a special type of contact metric structure $\cals=(\xi,\eta,\Phi,g)$ with underlying almost CR structure $(\cald,J)$ where $\eta$ is a contact form such that $\cald=\ker\eta$, $\xi$ is its Reeb vector field, $J=\Phi |_\cald$, and $g=d\eta\circ (\BOne \times\Phi) +\eta\otimes\eta$ is a Riemannian metric. $\cals$ is a Sasakian structure if $\xi$ is a Killing vector field and the almost CR structure is integrable, i.e. $(\cald,J)$ is a CR structure. We refer to \cite{BG05} for the fundamentals of Sasaki geometry. We call $(\cald,J)$ a {\it CR structure of Sasaki type}, and $\cald$ a {\it contact structure of Sasaki type}. We shall always assume that the Sasaki manifold $M^{2n+1}$ is compact and connected.

\subsection{The Sasaki Cone}
Within a fixed contact CR structure $(\cald,J)$ there is a conical family of Sasakian structures known as the Sasaki cone. We are also interested in a variation within this family. To describe the Sasaki cone we fix a Sasakian structure $\cals_o=(\xi_o,\eta_o,\Phi_o,g_o)$ on $M$ whose underlying CR structure is $(\cald,J)$ and let $\gt$ denote the Lie algebra of a maximal torus $\bbt$ in the automorphism group of $\cals_o$. The {\it (unreduced) Sasaki cone} \cite{BGS06} is defined by
\begin{equation}\label{sascone}
\gt^+(\cald,J)=\{\xi\in\gt~|~\eta_o(\xi)>0~\text{everywhere on $M$}\},
\end{equation}
which is a cone of dimension $k\geq 1$ in $\gt$. The reduced Sasaki cone $\grk(\cald,J)$ is $\gt^+(\cald,J)/\calw$ where $\calw$ is the Weyl group of the maximal compact subgroup of $\gC\gR(\cald,J)$ which, as mentioned previously, is the moduli space of Sasakian structures with underlying CR structure $(\cald,J)$. However, it is more convenient to work with the unreduced Sasaki cone $\gt^+(\cald,J)$. It is also clear from the definition that $\gt^+(\cald,J)$ is a cone under the transverse scaling defined by
\begin{equation}\label{transscale}
\cals=(\xi,\eta,\Phi,g)\mapsto \cals_a=(a^{-1}\xi,a\eta,g_a),\quad g_a=ag+(a^2-a)\eta\otimes\eta, \quad a\in\bbr^+.
\end{equation}
So $\gt^+(\cald,J)$ is a cone and since the Reeb vector field $\xi$ is Killing $\dim\gt^+(\cald,J)\geq 1$. Moreover, it follows from contact geometry that $\dim\gt^+(\cald,J)\leq n+1$. When $\dim\gt^+(\cald,J)=n+1$ we have a toric contact manifold of Reeb type studied in \cite{BM93,BG00b,Ler02a,Ler04,Leg10,Leg16}. In this case there is a strong connection between the geometry and topology of $(M,\cals)$ and the combinatorics of $\gt^+(\cald,J)$. Much can also be said in the complexity 1 case ($\dim\gt^+(\cald,J)=n$) \cite{AlHa06}.
 
We often have need to deform the contact structure $\cald\mapsto \cald_\varphi$ by a contact isotopy $\eta\mapsto \eta +d^c\varphi$ where $\varphi\in C^\infty(M)^\bbt$ is a smooth function invariant under the torus $\bbt$. We note that the Sasaki cone $\gt^+(\cald,J)$ is invariant under such contact isotopies, that is, $\gt^+(\cald_\varphi,J_\varphi)= \gt^+(\cald,J)$. For each $\varphi\in C^\infty(M)^\bbt$, $\cald_\varphi\longrightarrow TM$ gives a splitting of the exact sequence
$$0\longrightarrow L_{\xi_o}\longrightarrow TM\longrightarrow Q\longrightarrow 0$$
with $J_\varphi=\Phi |_{{\cald}_{\varphi}}$.  Furthermore, each choice of Reeb vector field $\xi\in\gt^+(\cald,J)$ gives rise to an infinite dimensional contractible space $\calS(M,\xi)$ of Sasakian structures \cite{BG05}. We shall often make such a choice $\cals=(\xi,\eta,\Phi,g)\in\calS(M,\xi)$ and identify it with the element $\xi\in\gt^+(\cald,J)$.

\section{The $S^3$-Join Construction}
The join construction is the Sasaki analogue for products in K\"ahler geometry. It was first described in the context of Sasaki-Einstein manifolds \cite{BG00a}, but then developed more generally in \cite{BGO06}, see also Section 7.6.2. of \cite{BG05}. Its relation to the de Rham decomposition Theorem and reducibility questions \cite{HeSu12b} were studied in \cite{BHLT16}. We consider the set $\cals\calo$ (respectively, $\cals\calm$) of all Sasaki orbifolds (manifolds), respectively. $\cals\calo$ is the object set of a groupoid whose morphisms are orbifold diffeomorphisms. Moreover, $\cals\calo$ is graded by dimension and has an additional multiplicative structure described in Section 2.4 of \cite{BHLT16} which we call the {\it Sasaki $l$-monoid}. The positive irreducible generators in dimension 3 are of the form $\star_\bfl S^3_\bfw$ where the weight vector $\bfw=(w^0,w^\infty)$ has relatively prime components. We apply this to the submodule $\cals\calm$ which is not an ideal in general. However, as we shall see $\star_\bfl S^3_\bfw$ does give a map $\cals\calm\ra{1.8}\cals\calm$ when certain conditions hold. We have used the join construction \cite{BoTo13,BoTo14P,BoTo14a} to produce new cscS metrics from known cscS metrics. In most cases (see  \cite{BoTo18c} for an exception) in constructing the join $M\star_\bfl S^3_\bfw$ we have assumed that the Sasakian structure on $M$ is regular. This is a restrictive assumption which is not necessary. We only need the Sasakian structure $\cals$ to be quasiregular in which case it is described by an $S^1$ orbibundle $M\longrightarrow N$ which has order $\Upsilon_\cals$. The procedure as outlined for example in \cite{BoTo14a} essentially works in the more general quasiregular case; however, one needs to proceed with a bit  more care at several stages, in particular, in handling the smoothness conditions. Before discussing these conditions we briefly discuss the orbibundle $M\longrightarrow N$. We assume that $N$ is a normal (compact) projective algebraic variety with a fixed orbifold structure which we write as the pair $(N,\grD_N)$ where $\grD_N$ is a sum of irreducible branch divisors, viz. 
$$\grD_N= \sum_{j=1}^k\bigl(1-\frac{1}{m_j}\bigr)D_j$$
where $k=\dim {\rm Div}(N)$,  the group of Weil divisors on $N$, $m_j$ are the ramification indices, and $D_j\in {\rm Div}(N)$. The algebraic singular locus is also given an orbifold structure. They are finite cyclic quotient singularities.

By Theorem 4.3.15 of \cite{BG05} the isomorphism classes of $S^1$ orbibundles over $(N,\grD_N)$ are uniquely determined by their orbifold first Chern class denoted by $c_1^{orb}(M/N)\in H^2(N,\bbq)$. Furthermore, when $M$ has a Sasakian structure, $c_1^{orb}(M/N)$ is an orbifold K\"ahler class with K\"ahler form denoted by $\gro_N$.

\begin{definition}\label{primdef}
We say that a compact Sasaki manifold $M=(M,\cals)$ with $\cals$ quasi-regular is {\it Sasaki primitive} or just {\it primitive} if there is no nontrivial finite cyclic Sasaki cover $\tilde{M}$ of $M$. By a finite cyclic Sasaki cover we mean a Sasaki manifold $(\tilde{M},\tilde{\cals})$ such that $M=\tilde{M}/G$ with $G$ a finite cyclic subgroup of the circle group induced by the Reeb vector field $\xi$. Here $\cals$ is the Sasakian structure on the quotient $M$ that is naturally induced from $\tilde{\cals}$.
\end{definition}

\begin{remark}\label{primrem}
It is well known that a compact regular Sasaki manifold $(M,\cals)$ is primitive if and only if the induced K\"ahler class $[\gro_N]$ is a primitive element of $H^2(N,\bbz)$. Moreover, there is a family $\{(M_n,\cals_n)\}_{n\in\bbz^+}$ of Sasaki manifolds with $(M_1,\cals_1)=(M,\cals)$ associated to the primitive K\"ahler class $[\gro_N]$ whose members have K\"ahler class $n[\gro_N]$ and $M_n=M/\bbz_n$ where $\bbz_n$ acts freely on $M$ and there is a short exact sequence
$$0\longrightarrow \pi_1(M)\longrightarrow \pi_1(M_n)\longrightarrow \bbz_n\longrightarrow 1.$$
However, an analogous result does not hold for quasi-regular Sasaki manifolds unless $\gcd(n,\Upsilon_N)=1$ where $\Upsilon_N$ is the order of the orbifold $N$. This is easily seen by the following example.  Consider $(N,\grD)$ with $N$ smooth and choose a primitive K\"ahler class $[\gro_N]\in H^2_{orb}(N,\bbz)$. Then $\Upsilon_N[\gro_N]$ is a primitive K\"ahler class in $H^2(N,\bbz)$ and as such the total space of the corresponding $S^1$ bundle over $N$ has a regular primitive Sasakian structure, but $\Upsilon_N[\gro_N]$ is clearly not primitive as an orbifold K\"ahler class. 
\end{remark}

%When $\gcd(n,\Upsilon_N)>1$ the action of $\bbz_n$ on $M$ is only locally free, so one gets orbifolds $M_n$ as quotients. The Sasaki manifolds obtained as the $S^1$ orbibundles with orbifold K\"ahler class $n[\gro_N]$ can be quite different. For example, if $n=n'\Upsilon_N$, $n[\gro_N]$ will be an integer class which for $n'=1$ gives a primitive integer K\"ahler class on $N$. In turn we get a regular Sasaki manifold $(M_0,\cals_0)$ over $(N,\emptyset)$ which typically has $c_1(\cald)\neq 0$.

%These branch divisors give a shifted orbifold canonical divisor which adds to the usual canonical divisor, and dually the orbifold first Chern class
%\begin{equation}\label{c1orbN}
%c_1^{orb}(N,\grD)=c_1(N) +\sum_j\bigl(\frac{1}{m_j}-1\bigr)PD(D_j).
%\end{equation}

\subsection{The Construction}
We now turn to the $S^3_\bfw$ join construction. Let $M$ be a compact quasi-regular Sasaki manifold whose $S^1$ action generated by its Reeb vector field is denoted by $\cala(\theta,x)$. The assumption for the rest of the paper is that $c_1^{orb}(M/N) = [\gro_N]\in H^2(N,\bbq)$ is a primitive orbifold K\"ahler class, where $\gro_N$ denotes the transverse K\"ahler form 
on $N$. Likewise, $c_1^{orb}(S^3_\bfw/\bbc\bbp^1[\bfw]) = [\omega_\bfw]=[\frac{\omega_{FS}}{w^0w^\infty}]$, where $\omega_{FS}$ is the K\"ahler form of the standard Fubini-Study metric on $\bbc\bbp^1$ such that $[\omega_{FS}] \in H^2(\bbc\bbp^1,\bbz)$ is primitive and $\omega_\bfw$ is the transverse extremal K\"ahler form on $\bbc\bbp^1[\bfw]$ of the canonical extremal Sasaki structure on $S^3_\bfw$.

For any pair of relatively prime positive integers $l^0,l^\infty$ we define the $S^3_\bfw$ join with $M$ as the quotient $M_{\bfl,\bfw}$ of
\begin{equation}\label{S3joineqn}
S^1\longrightarrow M\times S^3\longrightarrow M_{\bfl,\bfw}
\end{equation}
where the $S^1$ action is given by 
\begin{equation}\label{S1action}
(x;z_1,z_2)\mapsto (\cala(l^\infty\theta,x);e^{-il^0w^0\theta}z_1,e^{-il^0w^\infty\theta}z_2)
\end{equation}
with $|z_1|^2+|z_2|^2=1$. The join is then constructed from the following  commutative diagram
\begin{equation}\label{s2comdia}
\begin{matrix}  M\times S^3_\bfw &&& \\
                          &\searrow\pi_L && \\
                          \decdnar{\pi_{2}} && M_{\bfl,\bfw} &\\
                          &\swarrow\pi_1 && \\
                           (N,\grD_N)\times\bbc\bbp^1[\bfw] &&& 
\end{matrix} 
\end{equation}
where the $\pi_2$ is the product of the projections of the standard Sasakian projections $\pi_M:M\ra{1.6}  (N,\grD_N)$ and $S^3_\bfw\ra{1.6} \bbc\bbp^1[\bfw]$. The circle action \eqref{S1action} on $M\times S^3$ is generated by the vector field 
\begin{equation}\label{Lvec}
L_{\bfl,\bfw}=\frac{1}{2l^0}\xi_M-\frac{1}{2l^\infty}\xi_\bfw, 
\end{equation}
and its quotient orbifold $M_{\bfl,\bfw}$, which is called the $\bfl=(l^0,l^\infty)$-join of $M$ and $S^3_\bfw$ and denoted by $M_{\bfl,\bfw}=M\star_\bfl
S^3_\bfw$, has a naturally induced quasi-regular Sasakian structure $\cals_{\bfl,\bfw}$ with contact 1-form $\eta_{\bfl,\bfw}$ and Reeb vector field
\begin{equation}\label{Reebjoin}
\xi_{\bfl,\bfw}=\frac{1}{2l^0}\xi_M+\frac{1}{2l^\infty}\xi_\bfw.
\end{equation}
On the join $M_{\bfl,\bfw}=M\star_\bfl S^3_\bfw$ there is a direct sum decomposition of the tangent bundle \cite{HeSu12b, BHLT16}
\begin{equation}\label{tbunsplit}
TM_{\bfl,\bfw}=\cald_1\oplus \cald_2\oplus L_\xi
\end{equation}
which for $i=1,2$ gives rise to two foliations $\cale_i=\cald_i\oplus L_\xi$ of $M_{\bfl,\bfw}$ whose leaves are totally geodesic Sasaki submanifolds isomorphic, up to transverse scaling, as Sasakian structures to $M$ and $S^3_\bfw$, respectively. Note that the intersection $\cale_1\cap \cale_2$ is just $\calf_\xi$, and both the transverse metric $g^T$ and the contact bundle $\cald_{\bfl,\bfw}=\ker\eta_{\bfl,\bfw}$ split as direct sums. The natural numbers $l^0,l^\infty,w^0,w^\infty$ are generally contact invariants. Since the CR-structure $(\cald_{\bfl,\bfw},J_{\bfw})$ is the horizontal lift of  the complex structure on $N\times\bbc\bbp^1[\bfw]$, this splits as well. The choice of $\bfw$ determines the transverse complex structure $J$.

\subsection{Some Elementary Topology}
Generally, the join is an orbifold; however, from Proposition 7.6.6 in \cite{BG05} we have
\begin{lemma}\label{smoothlem}
If $(M,\cals)$ is a quasi-regular Sasaki manifold of order $\Upsilon_\cals$, then the join $M_{\bfl,\bfw}=M\star_\bfl S^3_\bfw$ is smooth if and only if 
\begin{equation}\label{adcond}
\gcd(l^\infty\Upsilon_{\cals},l^0w^0w^\infty)=1
\end{equation}
where the order $\Upsilon_\cals$ is precisely the order $\Upsilon_N$ of the quotient orbifold $N$, i.e. $\Upsilon_\cals=\Upsilon_N$. 
\end{lemma}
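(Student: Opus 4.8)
The reduction I would use is the standard one (this is essentially Proposition 7.6.6 of \cite{BG05} with the second factor taken to be $S^3_\bfw$): by construction $M_{\bfl,\bfw}$ is the quotient of $M\times S^3$ by the circle generated by $L_{\bfl,\bfw}=\frac{1}{2l^0}\xi_M-\frac{1}{2l^\infty}\xi_\bfw$, and since this field is nowhere zero on $M\times S^3$ (its two summands live on different factors and each is nonvanishing) the $S^1$-action \eqref{S1action} is locally free, so $M_{\bfl,\bfw}$ is automatically a compact orbifold; it is a smooth manifold exactly when that action is free. Thus the statement comes down to deciding freeness of \eqref{S1action}.

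First I would compute the stabilizer of a point $(x;z_1,z_2)\in M\times S^3$, stratifying by the positions of $x$ and of $(z_1,z_2)$. Let $m_x$ be the order of the $\cala$-isotropy of $x$, so that $m_x\mid\Upsilon_\cals$ for all $x$ and $\Upsilon_\cals=\lcm_x m_x$. Then $\cala(l^\infty\theta,x)=x$ precisely for $\theta\in\frac{2\pi}{m_xl^\infty}\bbz$, while on the $S^3$-factor $e^{-il^0w^0\theta}z_1=z_1$ forces $\theta\in\frac{2\pi}{l^0w^0}\bbz$ when $z_1\neq0$ and imposes nothing when $z_1=0$, and similarly for $z_2$ with $w^\infty$. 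Using $\frac1a\bbz\cap\frac1b\bbz=\frac1{\gcd(a,b)}\bbz$ and $\gcd(w^0,w^\infty)=1$, the stabilizer is cyclic of order $\gcd(m_xl^\infty,\,l^0w^0)$ on $\{z_2=0\}$, of order $\gcd(m_xl^\infty,\,l^0w^\infty)$ on $\{z_1=0\}$, and of order $\gcd(m_xl^\infty,\,l^0)$ where both $z_i\neq0$. Since $\gcd(A,l^0w^0)=\gcd(A,l^0w^\infty)=1$ is equivalent to $\gcd(A,l^0w^0w^\infty)=1$ (and then the last stratum is automatically free), the action \eqref{S1action} is free if and only if $\gcd(m_xl^\infty,\,l^0w^0w^\infty)=1$ for every $x\in M$.

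It remains to pass from ``for all $x$'' to the single condition \eqref{adcond}. One direction is immediate: $m_x\mid\Upsilon_\cals$ gives $m_xl^\infty\mid\Upsilon_\cals l^\infty$, so $\gcd(\Upsilon_\cals l^\infty,\,l^0w^0w^\infty)=1$ forces all the pointwise conditions. For the converse, let a prime $p$ divide $\gcd(\Upsilon_\cals l^\infty,\,l^0w^0w^\infty)$; then $p\mid l^0w^0w^\infty$. If $p\mid l^\infty$, then $p\nmid l^0$ (since $\gcd(l^0,l^\infty)=1$), so $p$ divides $w^0w^\infty$, and then $p$ divides $\gcd(m_xl^\infty,\,l^0w^0w^\infty)$ for \emph{every} $x$ (choose a suitable coordinate axis in $S^3$ according to whether $p\mid w^0$ or $p\mid w^\infty$). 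If $p\nmid l^\infty$, then $p\mid\Upsilon_\cals=\lcm_x m_x$, so $p\mid m_{x_0}$ for some $x_0$, and for this $x_0$ a suitable point of $S^3$ (a coordinate axis if $p$ divides $w^0$ or $w^\infty$, a generic point if $p\mid l^0$) gives an orbit with nontrivial stabilizer. Either way \eqref{S1action} is not free. Hence freeness is equivalent to $\gcd(\Upsilon_\cals l^\infty,\,l^0w^0w^\infty)=1$, which is \eqref{adcond}; and $\Upsilon_\cals=\Upsilon_N$ because, $M$ being smooth, the $\cala$-isotropy order over a point of $N$ equals the order of the local uniformizing group of $(N,\grD_N)$ there, so taking least common multiples over $M$ and over $N$ produces the same number.

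The only real obstacle is the bookkeeping in the converse step: one must be sure that every prime dividing $\Upsilon_\cals$ genuinely appears in the isotropy stratification of $M\times S^3$ --- which is exactly where the lcm-description of $\Upsilon_\cals$ (hence its equality with $\Upsilon_N$) enters --- and one must keep the three $S^3$-strata and the relation $\gcd(w^0,w^\infty)=1$ straight so that the two weighted conditions collapse to the clean factor $w^0w^\infty$. Everything else is the elementary lattice identity for $\frac1a\bbz\cap\frac1b\bbz$ together with multiplicativity of coprimality.
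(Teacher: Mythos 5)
Your argument is correct. The paper itself gives no proof of this lemma --- it simply quotes Proposition 7.6.6 of \cite{BG05} --- and your stabilizer computation (reducing smoothness to freeness of the circle action \eqref{S1action}, computing the isotropy order on each of the three strata of $S^3$ via $\frac1a\bbz\cap\frac1b\bbz=\frac1{\gcd(a,b)}\bbz$, and then passing between the pointwise conditions and the single condition \eqref{adcond} through $\Upsilon_\cals=\lcm_x m_x=\Upsilon_N$) is exactly the argument underlying that cited result, specialized to the $S^3_\bfw$ factor.
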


From the long exact homotopy sequence of the fibration \eqref{S3joineqn} and the well known Hurewicz Theorem we deduce

\begin{proposition}\label{elemtopprop}
Let $M$ be a simply connected quasi-regular Sasaki manifold with $\pi_2(M)=\bbz^k$. Then 
\begin{enumerate}
\item $M_{\bfl,\bfw}$ is simply connected;
\item $\pi_2(M_{\bfl,\bfw})=\bbz^{k+1}$;
\item $H_2(M_{\bfl,\bfw},\bbz)=\bbz^{k+1}$.
\end{enumerate}
\end{proposition}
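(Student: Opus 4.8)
The plan is to extract the homotopy and homology information directly from the defining fibration \eqref{S3joineqn}, namely $S^1\longrightarrow M\times S^3\longrightarrow M_{\bfl,\bfw}$. First I would write down the long exact homotopy sequence of this fibration. Since $M$ is simply connected and $S^3$ is $2$-connected, the total space $M\times S^3$ is simply connected with $\pi_2(M\times S^3)=\pi_2(M)\oplus\pi_2(S^3)=\bbz^k$ (as $\pi_2(S^3)=0$) and $\pi_3(M\times S^3)$ containing a $\bbz$ summand from $\pi_3(S^3)$. The relevant segment is
\begin{equation*}
\pi_2(S^1)\longrightarrow \pi_2(M\times S^3)\longrightarrow \pi_2(M_{\bfl,\bfw})\longrightarrow \pi_1(S^1)\longrightarrow \pi_1(M\times S^3).
\end{equation*}
Here $\pi_2(S^1)=0$ and $\pi_1(M\times S^3)=0$, so this collapses to the short exact sequence $0\to \bbz^k\to \pi_2(M_{\bfl,\bfw})\to \bbz\to 0$, which (since $\bbz$ is free) splits and gives $\pi_2(M_{\bfl,\bfw})=\bbz^{k+1}$, establishing part (2). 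For part (1), the tail of the sequence reads $\pi_1(M\times S^3)\to \pi_1(M_{\bfl,\bfw})\to \pi_0(S^1)=0$, and since $\pi_1(M\times S^3)=0$ we get $\pi_1(M_{\bfl,\bfw})=0$.

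For part (3), I would invoke the Hurewicz theorem: $M_{\bfl,\bfw}$ is simply connected by part (1), so $H_1(M_{\bfl,\bfw},\bbz)=0$ and the Hurewicz map $\pi_2(M_{\bfl,\bfw})\to H_2(M_{\bfl,\bfw},\bbz)$ is an isomorphism, whence $H_2(M_{\bfl,\bfw},\bbz)=\bbz^{k+1}$ by part (2). One subtlety to address is that $M_{\bfl,\bfw}$ is a priori only an orbifold; however, under the hypotheses at play (and as recorded in Lemma \ref{smoothlem}) we are in the situation where the join is a smooth manifold, so the classical long exact sequence of a principal $S^1$-bundle and the Hurewicz theorem apply verbatim. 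If one wishes to be careful about this, it suffices to note that the statement is topological and the circle action in \eqref{S1action}, together with the smoothness condition, realizes $M_{\bfl,\bfw}$ as the base of an honest $S^1$-bundle with simply connected total space $M\times S^3$.

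The only real point requiring attention is the identification of the connecting map $\pi_2(M_{\bfl,\bfw})\to \pi_1(S^1)=\bbz$ as surjective, equivalently that the $S^1$-bundle is nontrivial on $\pi_2$; this is automatic here because the Euler class of the bundle is nonzero — indeed the Reeb field $\xi_{\bfl,\bfw}$ in \eqref{Reebjoin} generates a quasi-regular, hence nontrivial, action, so the bundle $M\times S^3\to M_{\bfl,\bfw}$ has infinite-order Euler class and the boundary map in the Gysin or homotopy sequence is onto $\bbz$. Thus the short exact sequence above is exactly $0\to\bbz^k\to\pi_2(M_{\bfl,\bfw})\to\bbz\to0$ with the stated consequences. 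I do not anticipate a genuine obstacle; the argument is a direct application of the long exact homotopy sequence and Hurewicz, and the bookkeeping is routine once the $2$-connectedness of $S^3$ and simple connectivity of $M$ are used.
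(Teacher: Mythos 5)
Your argument is correct and is exactly the paper's: the authors deduce the proposition from the long exact homotopy sequence of the fibration \eqref{S3joineqn} together with the Hurewicz theorem, just as you do. One small remark: your closing paragraph about the Euler class is unnecessary, since the surjectivity of $\pi_2(M_{\bfl,\bfw})\to\pi_1(S^1)$ already follows from exactness and $\pi_1(M\times S^3)=0$, as you had in fact used earlier.
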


The join construction provides a family of contact structures of Sasaki type on a family of smooth manifolds whose cohomology ring can, in principle, be computed if one knows the cohomology ring of $M$.  

\subsection{The $\bfw$-Sasaki Cone $\gt^+_\bfw$}
The Sasakian structures that are most accessible through this construction are the elements of the 2-dimensional subcone $\gt^+_\bfw$ of the Sasaki cone $\gt^+_{\bfl,\bfw}$ known as the {\it $\bfw$-subcone}. If $N$ has no Hamiltonian symmetries then $\gt^+=\gt^+_\bfw$. On $M\times S^3$ we have the Lie algebra $\gt_M\oplus \gt_2$ which induces the Lie algebra isomorphism
\begin{equation}\label{t+add}
\gt_{M,\bfl,\bfw}\approx (\gt_M\oplus \gt_2)/\{L_{\bfl,\bfw}\}
\end{equation}
on $M_{\bfl,\bfw}$ where $\{L_{\bfl,\bfw}\}$ denotes the one dimensional ideal generated by the vector field $L_{\bfl,\bfw}$. 
The Lie algebra $\gt_M$ can be written as $\gt_M=\{\xi_M\} +\gh_M$ where $\gh_M$ is the horizontal lift to $M$ of a maximal commutative Lie algebra of Hamilonian vector fields on $N$. Likewise $\gt_2=\{\xi_\bfw\} +\gh_\bfw$ where $\gh_\bfw$ is the horizontal lift of the 1-dimensional Lie algebra generated by a Killing vector field on $\bbc\bbp^1[\bfw]$. Now the foliations $\cale_i$ on $M_{\bfl,\bfw}$ give rise to Lie algebra monomorphisms 
$$\gri_M:\gt_M\longrightarrow \gt_{M,\bfl,\bfw},\qquad \gri_2:\gt_2\longrightarrow \gt_{M,\bfl,\bfw}$$
such that $\gri_M(\gt_M)\cap\gri_2(\gt_2)=\{\xi_{\bfl,\bfw}\}$. This together with \eqref{t+add} gives

\begin{lemma}\label{torjoinlem}
There is an isomorphism of Abelian Lie algebras
$$\gt_{M,\bfl,\bfw}=\{\xi_{\bfl,\bfw}\} + \gh_M +\gh_\bfw.$$
\end{lemma}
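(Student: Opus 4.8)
The plan is to assemble the isomorphism $\gt_{M,\bfl,\bfw}\cong\{\xi_{\bfl,\bfw}\}+\gh_M+\gh_\bfw$ by combining the quotient description \eqref{t+add} with the two monomorphisms $\gri_M,\gri_2$ coming from the foliations $\cale_1,\cale_2$. First I would start from $\gt_M=\{\xi_M\}+\gh_M$ and $\gt_2=\{\xi_\bfw\}+\gh_\bfw$ (these decompositions are given in the text, and each is a direct sum since $\gh_M$ resp.\ $\gh_\bfw$ are horizontal lifts of Hamiltonian algebras on $N$ resp.\ $\bbc\bbp^1[\bfw]$, which contain no multiple of the Reeb field). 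Hence $\gt_M\oplus\gt_2=\{\xi_M\}\oplus\{\xi_\bfw\}\oplus\gh_M\oplus\gh_\bfw$ as vector spaces, a space of dimension $2+\dim\gh_M+\dim\gh_\bfw$.

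Next I would pass to the quotient by the line $\{L_{\bfl,\bfw}\}$, where $L_{\bfl,\bfw}=\tfrac{1}{2l^0}\xi_M-\tfrac{1}{2l^\infty}\xi_\bfw$ by \eqref{Lvec}. Since $L_{\bfl,\bfw}$ lies entirely in the rank-two subspace $\{\xi_M\}\oplus\{\xi_\bfw\}$ and is nonzero there, the quotient $(\{\xi_M\}\oplus\{\xi_\bfw\})/\{L_{\bfl,\bfw}\}$ is one–dimensional, and the images of $\gh_M$ and $\gh_\bfw$ inject into the quotient and still span complementary subspaces to this line. By \eqref{t+add} the quotient is exactly $\gt_{M,\bfl,\bfw}$, so as a vector space $\gt_{M,\bfl,\bfw}=\bar{\ell}\,\oplus\,\gh_M\oplus\gh_\bfw$ where $\bar\ell$ is the image of the $\xi$-plane. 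It remains to identify $\bar\ell$ with $\{\xi_{\bfl,\bfw}\}$: by \eqref{Reebjoin} the class of $\xi_{\bfl,\bfw}=\tfrac{1}{2l^0}\xi_M+\tfrac{1}{2l^\infty}\xi_\bfw$ is nonzero in the quotient (it is not a multiple of $L_{\bfl,\bfw}$ since $l^0,l^\infty>0$), hence it spans $\bar\ell$. This gives the claimed vector-space direct sum, and since $\gt_M\oplus\gt_2$ is abelian and we are only quotienting and taking subalgebras, the resulting Lie algebra is abelian — consistent with the monomorphisms $\gri_M,\gri_2$ landing in a common maximal torus of the join with $\gri_M(\gt_M)\cap\gri_2(\gt_2)=\{\xi_{\bfl,\bfw}\}$, which is what pins down that the two horizontal pieces meet only in the zero subspace after the identification.

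The only genuine subtlety — and the step I would be most careful about — is checking that $\gh_M$ and $\gh_\bfw$ really do have independent, faithful images in the quotient, i.e.\ that the natural map $\gh_M\oplus\gh_\bfw\to\gt_{M,\bfl,\bfw}$ is injective with image complementary to $\{\xi_{\bfl,\bfw}\}$. This follows because $\{L_{\bfl,\bfw}\}\subset\{\xi_M\}\oplus\{\xi_\bfw\}$, so the ideal we quotient by is transverse to $\gh_M\oplus\gh_\bfw$; equivalently one invokes that $\gri_M$ and $\gri_2$ are monomorphisms with $\gri_M(\gh_M)\cap\gri_2(\gt_2)=\gri_M(\gh_M)\cap\gri_M(\gt_M)\cap\gri_2(\gt_2)$ forcing a trivial intersection of the horizontal parts. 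Everything else is routine linear algebra on the explicit vectors \eqref{Lvec} and \eqref{Reebjoin}.
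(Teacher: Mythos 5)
Your proposal is correct and follows essentially the same route as the paper, which derives the lemma directly from the quotient description \eqref{t+add} together with the splittings $\gt_M=\{\xi_M\}+\gh_M$, $\gt_2=\{\xi_\bfw\}+\gh_\bfw$ and the monomorphisms $\gri_M,\gri_2$ induced by the foliations $\cale_i$. You merely spell out the elementary linear algebra (that $\{L_{\bfl,\bfw}\}$ sits inside the $\xi$-plane and that the class of $\xi_{\bfl,\bfw}$ spans the quotient of that plane) which the paper leaves implicit.
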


We put $\gt_\bfw=\gri_2(\gt_2)$ and this identifies the Sasaki cone $\gt^+_2$ of $S^3$ with the 2 dimensional subcone $\gt^+_\bfw$ of $\gt^+_{M,\bfl,\bfw}$. Similarly, the Sasaki cone of $M$ is identified with a subcone $\gt^+_M$ of $\gt^+_{M,\bfl,\bfw}$. These two subcones intersect along the ray $\gr_{\bfl,\bfw}$ generated by $\xi_{\bfl,\bfw}$.

 It is not difficult to show that
\begin{lemma}\label{wsubcone}
The $\bfw$-subcone $\gt^+_\bfw$ is invariant under the adjoint action of $\gA\gu\gt_0(M_{\bfl,\bfw})$ on its Lie algebra.
\end{lemma}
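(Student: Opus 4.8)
The plan is to show that $\gt^+_\bfw$ is characterized intrinsically by data that the automorphism group must preserve, so invariance follows formally. First I would recall that by Lemma \ref{torjoinlem} the maximal torus $\bbt_{M,\bfl,\bfw}$ has Lie algebra $\gt_{M,\bfl,\bfw}=\{\xi_{\bfl,\bfw}\}+\gh_M+\gh_\bfw$, and that $\gt_\bfw=\gri_2(\gt_2)$ is the image of the $S^3_\bfw$-factor; equivalently, $\gt_\bfw$ is the subalgebra tangent to the leaves of the foliation $\cale_2$ in the splitting \eqref{tbunsplit}. The key point is that $\gA\gu\gt_0(M_{\bfl,\bfw})$, being connected, acts on the finite-dimensional space $\gt_{M,\bfl,\bfw}$ through its (connected) Weyl-type action, but more importantly it preserves every piece of structure canonically attached to the contact CR manifold $(M_{\bfl,\bfw},\cald_{\bfl,\bfw},J_\bfw)$. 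In particular it preserves the two foliations $\cale_1,\cale_2$: these are defined by the $J$-invariant subbundles $\cald_1,\cald_2\subset\cald_{\bfl,\bfw}$, which are themselves intrinsic because they are the pullbacks of the two de Rham / complex factors of $N\times\bbc\bbp^1[\bfw]$, and an automorphism of the transverse holomorphic structure must permute the irreducible factors of the transverse Kähler manifold.

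The second step is to rule out the possible \emph{swapping} of the two factors. Here I would use a dimension or topological invariant: the leaves of $\cale_2$ are (up to transverse scaling) copies of $S^3_\bfw$, which are $3$-dimensional, whereas the leaves of $\cale_1$ are copies of $M$, of dimension $2n+1\geq 5$ in all cases of interest (and the construction is only interesting for $\dim M\geq 3$; if $\dim M=3$ one can instead invoke that the two factors carry genuinely different transverse geometry, e.g. the weighted versus the given Kähler class, or appeal to the normalization $c_1^{orb}$ data fixed in the construction). Hence no automorphism can carry $\cale_2$ to $\cale_1$, so $\gA\gu\gt_0(M_{\bfl,\bfw})$ preserves $\cale_2$, and therefore its induced action on $\gt_{M,\bfl,\bfw}$ preserves the subalgebra $\gt_\bfw$ of vector fields tangent to $\cale_2$. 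Since $\gt^+_\bfw=\gt_\bfw\cap\gt^+_{M,\bfl,\bfw}$ and the full Sasaki cone $\gt^+_{M,\bfl,\bfw}$ is visibly $\Aut$-invariant (it is defined by the condition $\eta_o(\cdot)>0$, preserved up to the contact isotopies that do not change the cone, as noted in Section 1), the intersection $\gt^+_\bfw$ is invariant as well.

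I would then make the first step rigorous by the following argument for why $\cald_1$ and $\cald_2$ are canonical. An element of $\gA\gu\gt_0(M_{\bfl,\bfw})$ acts on the transverse holomorphic structure of $(M_{\bfl,\bfw},\xi_{\bfl,\bfw})$; by the commutative diagram \eqref{s2comdia} this transverse structure is that of the product orbifold $(N,\grD_N)\times\bbc\bbp^1[\bfw]$. A biholomorphism of a product of a variety with no $\bbc\bbp^1[\bfw]$-factor and a $\bbc\bbp^1[\bfw]$ preserves the product decomposition of the tangent sheaf into the two factor directions — this is the transverse analogue of the statement that $\gh_M$ and $\gh_\bfw$ are the two distinguished commuting pieces in Lemma \ref{torjoinlem}. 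Lifting horizontally to $\cald_{\bfl,\bfw}$ (using the splitting of the tangent bundle \eqref{tbunsplit}, which is itself $\Aut_0$-equivariant because $L_\xi$ is the line spanned by the canonical Reeb field), this shows $\cald_2$ is preserved, hence so is $\cale_2=\cald_2\oplus L_\xi$.

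The main obstacle I expect is precisely this last point: making airtight the claim that a transverse biholomorphism must respect the product splitting of the tangent sheaf into the $N$- and $\bbc\bbp^1[\bfw]$-directions, especially in the orbifold setting and when $N$ itself may have extra symmetry (the case $\gt^+\neq\gt^+_\bfw$). The cleanest route is probably not to argue at the level of the full biholomorphism group of the product but to stay within the torus: since we only need invariance under $\Ad(\gA\gu\gt_0)$ and $\gA\gu\gt_0$ normalizes a maximal torus whose Lie algebra is the one described in Lemma \ref{torjoinlem}, it suffices to check that the Weyl group of $\gA\gu\gt_0(M_{\bfl,\bfw})$ fixes the $\gh_\bfw$-direction setwise; and the Weyl group acts on $\gt_{M,\bfl,\bfw}=\{\xi_{\bfl,\bfw}\}+\gh_M+\gh_\bfw$ fixing the canonical Reeb ray $\gr_{\bfl,\bfw}$ and respecting the root-space decomposition, in which the $S^3_\bfw$-root (coming from the $\bbc\bbp^1[\bfw]$-factor) is distinguished from the roots coming from $N$ by the structure of the reductive automorphism algebra. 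I would flag that if $N$ has symmetries mixing with the $\bbc\bbp^1[\bfw]$-factor in some degenerate way the statement could fail, but the hypotheses of the construction (the fixed primitive orbifold Kähler classes and the distinctness of the two transverse Kähler data) exclude this.
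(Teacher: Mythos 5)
The paper itself offers no proof of Lemma \ref{wsubcone} --- it is prefaced only by ``It is not difficult to show that'' --- so there is nothing to compare your argument against line by line; judged on its own terms, your proposal is essentially right. The core of your argument is the correct one: $\gt_\bfw$ is exactly the set of elements of $\gt_{M,\bfl,\bfw}$ whose $\gh_M$-component vanishes, i.e.\ those tangent to $\cale_2=\cald_2\oplus L_\xi$; the splitting \eqref{tbunsplit} is the transverse de Rham decomposition of \cite{HeSu12b,BHLT16} and is therefore canonical up to a permutation of isometric irreducible factors; and the adjoint action $\xi\mapsto g_*\xi$ of an automorphism preserving $\cale_2$ preserves both $\gt_\bfw$ and the positivity condition defining the Sasaki cone. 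Two refinements would tighten it. First, your leaf-dimension count to rule out swapping $\cale_1$ and $\cale_2$ is both unnecessary and weaker than what you need: since $\gA\gu\gt_0(M_{\bfl,\bfw})$ is \emph{connected} and the set of irreducible factors of the transverse de Rham decomposition is discrete, the induced permutation is automatically trivial. This disposes at once of the case $\dim M=3$ and of the worry in your last paragraph that $N$ might itself contain a factor isomorphic to $\bbc\bbp^1[\bfw]$ --- even then the identity component fixes each irreducible factor, hence each of the two groupings $\cald_1$ and $\cald_2$. Second, the one place where a genuine check remains is $\bfw=(1,1)$: there the infinitesimal automorphisms tangent to $\cale_2$ form an algebra strictly larger than $\gt_\bfw$, so $g_*\xi$ need not lie in the fixed maximal torus; restricting to the normalizer of the torus, the induced Weyl action on $\gt_\bfw$ is the swap $(v^0,v^\infty)\mapsto(v^\infty,v^0)$, which visibly preserves $\gt^+_\bfw$. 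Your closing remarks about the Weyl group point in this direction but stop short of this simple computation; with it in hand, the hedged caveat at the end of your proposal can be deleted, since no hypothesis beyond those already built into the construction is needed.
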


\subsection{The Quotient Orbifolds}
We are interested in Sasakian structures $\cals$ in the $\bfw$-subcone $\gt^+_\bfw$, in particular quasi-regular Sasakian structures, that is, Reeb vector fields $\xi_\bfv$ that lie on the integer lattice $\grL_\bfw \subset \gt^+_\bfw$. They are completely determined by the pair $(v^0,v^\infty)$ of relatively prime positive integers. This pair then determines the branch divisors $D^0,D^\infty$ which are the zero sections and infinity sections of an orbibundle $(S_n,\grD_{m,N})\longrightarrow (N,\grD_N)$ with ramification indices $\bfm=(m^0,m^\infty)=(mv^0,mv^\infty)$ through the equations $s=\gcd(l^\infty,w^0v^\infty-w^\infty v^0)$ and $l^\infty=ms$. Before giving the generalization of Theorem 3.8 of \cite{BoTo14a} we describe, following \cite{GhKo05}, what is meant by the orbibundle 
$$\pi^{orb}: (S_n,\grD_{m,N})\longrightarrow (N,\grD_N).$$
First note that for $n \in \bbz$, by $S_n$ we mean the (total space of) the projective bundle \newline $\bbp(\BOne\oplus L_n) \longrightarrow (N,\grD_N)$, where $L_n \longrightarrow (N,\grD_N)$ is an orbi-line bundle such that $c_1^{orb}(L_n)=n [\omega_N]$. Set theoretically $\pi^{orb}$ is a holomorphic map $\pi:S_n\longrightarrow N$ of normal reduced complex spaces, but in the orbifold category $\pi^{orb}$ pullsback the branch divisor $\grD_N$ to $S_n$ giving the orbifold structure on $S_n$ with branch divisor $\grD_{m,N}=\grD_\bfm+\pi^{-1}(\grD_N)$ such that the generic\footnote{By generic fiber we mean the fiber over a point in the orbifold regular locus of $N$.} fibers of $\pi^{orb}$ have the orbifold structure $\bbc\bbp^1[\bfv]/\bbz_m=(\bbc\bbp^1,\grD_\bfm)$. We have the commutative diagram
\begin{equation}\label{galoiscomdia}
\begin{matrix}  (S_n,\grD_{\bfm,N}) &\fract{\pi^{orb}}{\longrightarrow} & (N,\grD_N) \\
                       \decdnar{} && \decdnar{} \\
                       (S_n,\emptyset) &\fract{\pi}{\longrightarrow} & (N,\emptyset) 
\end{matrix} 
\end{equation}
where $(N,\emptyset)$ is a normal projective algebraic variety $N$ whose algebraic singularities are finite cyclic quotient singularities. So $(N,\emptyset)$ is itself an orbifold. The vertical maps are set theoretically the identity; however, in the orbifold category they are non-trivial Galois coverings with trivial Galois group. This means that there is a non-trivial Galois covering on the level of the local uniformizing neighborhoods with a non-trivial local Galois group. Recall that a Riemannian metric and hence a K\"ahler metric (form) on a complex orbifold is a sequence of K\"ahler metrics on the local uniformizing neighborhoods that are invariant under the local uniformizing groups and that patch together by the injection maps (cf \cite{BG05}, Definition 4.2.11).

%Thus, in the orbifold category $\pi^{orb}$ is a Galois covering map.

We are now ready for the generalization of Theorem 3.8 of \cite{BoTo14a} to the case that $M$ is quasi-regular.
 
\begin{theorem}\label{orbquot}
Let $(M,\cals)$ be a quasi-regular Sasakian structure and $M_{\bfl,\bfw}=M\star_\bfl S^3_\bfw$ the $S^3_\bfw$ join of $M$. Then
the quotient of $M_{\bfl,\bfw}$ by the $S^1$ action generated by a quasi-regular Reeb vector field $\xi_\bfv\in\gt^+_\bfw$ is the orbifold $(S_n,\grD_\bfm+\pi^{-1}(\grD_N))$ where $S_n$ is the total space of the projective orbibundle 
$$(S_n,\grD_\bfm+\pi^{-1}(\grD_N))\fract{\pi^{orb}}{\longrightarrow} (N,\grD_N)$$ 
with generic fibers $\bbc\bbp^1(\bfv)/\bbz_m$ where 
$$m=\frac{l^\infty}{s},\quad n=l^0\frac{(w^0v^\infty -w^\infty v^0)}{s},\quad s=\gcd(l^\infty,|w^0v^\infty-w^\infty v^0 |)$$ 
and $\grD_\bfm$ is a branch divisor consisting of the zero $D^0$ and infinity $D^\infty$ sections of $\bbp(\BOne\oplus L_n)$ with ramification indices $\bfm=(m^0,m^\infty)=m(v^0,v^\infty)$.
Moreover, $c_1^{orb}(L_n)=n [\omega_N] = nc_1^{orb}(M/N)$. 
\end{theorem}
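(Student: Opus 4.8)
The plan is to reduce the quasi-regular case to the regular case of Theorem 3.8 of \cite{BoTo14a} via the order-$\Upsilon_\cals$ uniformizing data, while tracking the branch divisors carefully. First I would observe that the quotient of $M_{\bfl,\bfw}$ by the circle $\calA_\bfv$ generated by $\xi_\bfv$ is the \emph{same set-theoretic space} as in the regular case, since the analysis in \cite{BoTo14a} is local over $N$ and the only new feature here is the orbifold structure on $N$ and on the $S^3_\bfw$ factor. Concretely, I would work on the intermediate quotient: $M_{\bfl,\bfw}$ maps to $N\times\bbc\bbp^1[\bfw]$ with generic $S^1\times S^1$-fibers, and the sub-circle $\calA_\bfv$ corresponds, under the identification $\gt_{M,\bfl,\bfw}=\{\xi_{\bfl,\bfw}\}+\gh_M+\gh_\bfw$ of Lemma \ref{torjoinlem}, to a sub-circle of the torus acting along the $\bbc\bbp^1[\bfw]$-direction fibered over $N$. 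Dividing out gives a $\bbc\bbp^1$-bundle (in the orbifold sense) over $(N,\grD_N)$, i.e. $\bbp(\BOne\oplus L)$ for some orbi-line bundle $L$.

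The key step is the identification of $L=L_n$ and the computation $n=l^0(w^0v^\infty-w^\infty v^0)/s$. I would do this by computing orbifold first Chern classes. On $M\times S^3$ the relevant quotient circles are generated by $L_{\bfl,\bfw}$ (giving $M_{\bfl,\bfw}$) and then by a lift of $\xi_\bfv$; the composite quotient $M\times S^3\to S_n$ is a $T^2$-quotient, and the weights of this $T^2$ on the $S^3$-factor (with weights $\bfw$) together with the parameters $\bfl,\bfv$ determine a $2\times 2$ integer matrix whose relevant $2\times2$ minor, divided by the gcd $s=\gcd(l^\infty,|w^0v^\infty-w^\infty v^0|)$, is exactly $m$ (the fiber uniformizing index) and whose product with $l^0$ gives $n$. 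The clean way to see $c_1^{orb}(L_n)=n[\gro_N]$ is: the pullback of $[\gro_N]$ to $M$ is $0$ in a suitable sense (it is the curvature of the $S^1$-orbibundle $M\to N$), so the class of $L$ on $S_n$ restricted to $N$ is forced to be a multiple of $[\gro_N]$, and the multiple is read off from how the $\calA_\bfv$-circle winds around the $\xi_M$-direction versus the $\xi_\bfw$-direction — this winding number is $l^0(w^0v^\infty-w^\infty v^0)/s$. This is precisely the same computation as in the regular case, where $\Upsilon_\cals=1$; the quasi-regular case changes nothing in this Chern-class bookkeeping because $[\gro_N]=c_1^{orb}(M/N)$ is an \emph{orbifold} class throughout.

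The remaining point is the orbifold structure along the fibers and the pullback divisor: the generic fiber is $\bbc\bbp^1[\bfv]/\bbz_m$ because the quasi-regular Reeb field $\xi_\bfv=(v^0,v^\infty)$ induces, in the $\bbc\bbp^1[\bfw]$-direction, a $\bbc\bbp^1$ with isotropy $(v^0,v^\infty)$ at the two fixed points, further quotiented by the $\bbz_m$ coming from $l^\infty=ms$; the branch divisor $\grD_\bfm$ is supported on the zero and infinity sections $D^0,D^\infty$ with ramification indices $m(v^0,v^\infty)$, and $\pi^{-1}(\grD_N)$ is pulled back from the base as in diagram \eqref{galoiscomdia}. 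I expect the main obstacle to be the smoothness/compatibility bookkeeping: one must verify that the Galois-covering-with-trivial-group phenomenon in diagram \eqref{galoiscomdia} is handled correctly, i.e. that the local uniformizing charts of $S_n$ over the orbifold singular locus of $N$ patch with those coming from the fiber quotient, and that $s$ and $m$ are the \emph{correct} integers (not off by a divisor of $\Upsilon_\cals$); this is exactly the "proceed with more care" point flagged in the text, and it is where the quasi-regular generalization genuinely differs from \cite{BoTo14a}. Once the gcd conventions are pinned down, the statement follows by combining the local fiber picture with the global Chern-class identification above.
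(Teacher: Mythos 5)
Your proposal follows essentially the same route as the paper: both reduce to the regular-case computation of Sections 3.3--3.5 of \cite{BoTo14a} via the intermediate $\bbt^2$-quotient of $M\times S^3$ (equivalently, the lens space diagram \eqref{comdia2}), read off $m$, $n$, and the fiber $\bbc\bbp^1[\bfv]/\bbz_m$ from the weights of that torus action, and isolate the two genuinely new features of the quasi-regular case --- the residual circle action being only locally free and the additional pullback branch divisor $\pi^{-1}(\grD_N)$ shifting the orbifold data. The paper's proof is exactly this deferral with those two consequences noted, so your plan matches it.
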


\begin{proof}
The proof given in Sections 3.3-3.5 of \cite{BoTo14a} works equally well when the Sasakian structure on $M$ is quasi-regular with only minor changes. The essential difference is that residual circle action denoted by $S^1_\theta/\bbz_{l_2}$ in \cite{BoTo14a} is now only locally free. As in \cite{BoTo14a} the commutative diagram \eqref{s2comdia} gives the commutative diagram
\begin{equation}\label{comdia2}
\begin{matrix}  M\times L(l^\infty;l^0w^0,l^0w^\infty) &&& \\
                          &\searrow\pi_L && \\
                          \decdnar{\pi_{\bbt^2}} && M_{\bfl,\bfw} &\\
                          &\swarrow\pi_\bfv && \\
                         M_{\bfl,\bfw}/S^1_\phi &&& 
\end{matrix}
\end{equation}
where $L(l^\infty;l^0w^0,l^0w^\infty)$ is a lens space. The action of $\bbt^2=S^1_\phi\times (S^1_\theta/\bbz_{l^\infty})$ on $M\times L(l^\infty;l^0w^0,l^0w^\infty)$ is given by
\begin{equation}\label{T2action3}
(u;z_1,z_2)\mapsto (\cala(\theta,u);[e^{i(v^0\phi-\frac{l^0w^0}{l^\infty}\theta)}z_1,e^{i(v^\infty\phi-\frac{l^0w^\infty}{l^\infty}\theta)}z_2]),
\end{equation}
where now the action $\cala$ on $M$ is only locally free.
This has two main consequences. First, $M_{\bfl,\bfw}$ is a Sasaki orbifold unless Equation \eqref{adcond} holds in which case $M_{\bfl,\bfw}$ will be a smooth Sasaki manifold. Second, and more importantly, the quotient $M_{\bfl,\bfw}/S^1_\theta/\bbz_{l_2}$ can have an additional branch divisor which is the inverse image $\pi^{-1}(\grD_N)$. This gives branch divisors on $(S_n,\grD_{N,\bfm})$ as 
\begin{equation}\label{brdiv}
\grD_{N,\bfm}=(1-\frac{1}{m^0})D^0+(1-\frac{1}{m^\infty})D^\infty +\sum_{j=1}^k\bigl(1-\frac{1}{m_j}\bigr)\pi^{-1}(D_j).
\end{equation}
As is well understood this shifts the orbifold first Chern class. Other than this, the argument is the same. So the result follows as in \cite{BoTo14a}. 
\end{proof}

\begin{remark} Note that for $m$ and $n$ as in Theorem \ref{orbquot} we have
$\gcd(m,n) =1$. This is due to the fact that $\gcd(l^0,l^\infty)=1$,  any factor of $m$ is a factor of $l^\infty$, and, by definition of $s$, $\gcd(m,\frac{w^0 v^\infty - w^\infty v^0}{s}) = 1$.
\end{remark}

Generally we have

\begin{lemma}\label{joinorderlem}
Let $M_{\bfl,\bfw}$ be the $S^3_\bfw$-join with a quasiregular Sasaki manifold $M\longrightarrow N$ of order $\Upsilon_N$. Let $\cals_\bfv=(\xi_\bfv,\eta_\bfv,\Phi,g_\bfv)$ be a quasiregular Sasakian structure in the $\bfw$ cone $\gt^+_\bfw$. The order $\Upsilon$ of $\cals_\bfv$ is the product $mv^0v^\infty\Upsilon_N$.
\end{lemma}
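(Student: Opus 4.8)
The plan is to reduce the statement, via Theorem \ref{orbquot}, to a computation of the order of a quotient orbifold. By definition the order $\Upsilon$ of the quasiregular Sasakian structure $\cals_\bfv$ is the order of its quotient K\"ahler orbifold $\calz$, that is, the least common multiple of the orders $|\Gamma_x|$ of the local uniformizing groups of $\calz$ taken over all $x\in\calz$ (equivalently, the least common multiple of the orders of the isotropy subgroups of the circle action generated by $\xi_\bfv$). By Theorem \ref{orbquot} we have $\calz=(S_n,\grD_\bfm+\pi^{-1}(\grD_N))$, the total space of the projective orbibundle $\pi^{orb}\colon(S_n,\grD_\bfm+\pi^{-1}(\grD_N))\to(N,\grD_N)$ with generic fibre $\bbc\bbp^1[\bfv]/\bbz_m=(\bbc\bbp^1,\grD_\bfm)$, where $\grD_\bfm$ carries the ramification indices $m^0=mv^0$ along the section $D^0$ and $m^\infty=mv^\infty$ along the disjoint section $D^\infty$. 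So it suffices to compute $\lcm_{x}|\Gamma_x|$ for this orbifold.

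Next I would read off the local uniformizing groups of $\calz$. Because $S_n=\bbp(\BOne\oplus L_n)$ is a locally trivial $\bbc\bbp^1$-orbibundle over $(N,\grD_N)$, near each point it is, as an orbifold, a product of the fibre orbifold $(\bbc\bbp^1,\grD_\bfm)$ with a local uniformizing chart of $(N,\grD_N)$, the divisor $\pi^{-1}(\grD_N)$ contributing precisely the pulled-back base orbifold structure. Thus, writing $\Gamma_p$ for the local uniformizing group of $(N,\grD_N)$ at a point $p$, a point $x\in S_n$ over $p$ has local uniformizing group of order $mv^0\,|\Gamma_p|$ if $x\in D^0$, of order $mv^\infty\,|\Gamma_p|$ if $x\in D^\infty$, and of order $|\Gamma_p|$ otherwise; moreover each of these occurs, since the sections $D^0,D^\infty$ surject onto $N$ and over each $p$ the fibre has points off $D^0\cup D^\infty$. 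Hence the set of orders appearing among the $|\Gamma_x|$ is exactly $\{\,|\Gamma_p|,\ mv^0|\Gamma_p|,\ mv^\infty|\Gamma_p| : p\in N\,\}$.

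It then remains to take the least common multiple. Using $\gcd(v^0,v^\infty)=1$ and the identity $\lcm(ca,cb)=c\,\lcm(a,b)$, one gets $\lcm(mv^0|\Gamma_p|,mv^\infty|\Gamma_p|)=mv^0v^\infty|\Gamma_p|$ for each $p$, whence $\lcm_{p}\bigl(mv^0v^\infty|\Gamma_p|\bigr)=mv^0v^\infty\cdot\lcm_{p}|\Gamma_p|=mv^0v^\infty\,\Upsilon_N$, the final equality being exactly the statement that $\Upsilon_N$ is the order of the orbifold $(N,\grD_N)$; at orbifold-regular points $|\Gamma_p|=1$, so the pure-fibre orders $mv^0,mv^\infty$ are already included. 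Since every order in the displayed set divides $mv^0v^\infty\Upsilon_N$, this value is the full least common multiple, i.e.\ $\Upsilon=mv^0v^\infty\Upsilon_N$.

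The step that needs genuine care — essentially the only content beyond Theorem \ref{orbquot} and bookkeeping with least common multiples — is the local-product description of $\calz$ at the \emph{corner} points, where $D^0$ or $D^\infty$ meets $\pi^{-1}(\grD_N)$, or the preimage of an algebraic quotient singularity of $N$: one must check that the local uniformizing group there genuinely has order $mv^0|\Gamma_p|$ (resp.\ $mv^\infty|\Gamma_p|$), i.e.\ is the product of the fibrewise group with the base group and not some proper quotient of it, and that the projectivization $\bbp(\BOne\oplus L_n)$ introduces no further quotient singularities of its own. This follows from the local triviality of the orbibundle $S_n\to(N,\grD_N)$ together with the construction of $S_n=\bbp(\BOne\oplus L_n)$ recalled before Theorem \ref{orbquot}, but it should be written out carefully rather than merely asserted.
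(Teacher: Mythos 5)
Your argument is correct and in substance the same as the paper's: both reduce the order to the least common multiple of the orders of the local uniformizing groups, identify these as products of a base group of order $|\Gamma_p|$ (dividing $\Upsilon_N$) with a fibre group of order $m^0=mv^0$ along $D^0$ or $m^\infty=mv^\infty$ along $D^\infty$, and conclude via $\lcm(m^0,m^\infty)=mv^0v^\infty$ using $\gcd(v^0,v^\infty)=1$. The only difference is that the paper performs the enumeration upstairs, reading off the isotropy subgroups $\bbz_r\times\bbz_{m^0}$ and $\bbz_r\times\bbz_{m^\infty}$ of the $\bbt^2$ action \eqref{T2action3} on $M\times L(l^\infty;l^0w^0,l^0w^\infty)$, which disposes of the corner-point/local-triviality issue you rightly flag at the end without needing a separate argument on the quotient.
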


\begin{proof}
The local uniformizing groups of the orbifold are just the isotropy subgroups of the $\bbt^2$ action \eqref{T2action3}, and by definition the order of an orbifold is the least common multiple of the orders of the isotropy groups. 
The orbifold in question is the quotient of $M\times L(l^\infty;l^0w^0,l^0w^\infty)$ by the $\bbt^2$ action \eqref{T2action3} which is free on the dense open subset 
$$\calz=\Bigl(M\setminus \pi_M^{-1}\circ\pi^{-1}(\grD_N)\Bigr)\times \Bigl(L(l^\infty;l^0w^0,l^0w^\infty)\setminus \{[z_1=0]\cup [z_2=0]\}\Bigr)$$
where $|z_1|^2+|z_2|^2=1$ is understood. Let $x\in N$ be a point with isotropy group $\bbz_r$ and $y\in [z_2=0]$. Then the isotropy group at $(x,y)$ is $\bbz_r\times \bbz_{m^0}$. Similarly, if $y\in [z_1=0]$ the isotropy group at $(x,y)$ is $\bbz_r\times \bbz_{m^\infty}$. Running through the orbifold singular set of $N$ gives the order $\Upsilon_N\lcm(m^0,m^\infty)= \Upsilon_Nmv^0v^\infty$.                                                                            
\end{proof}

\subsection{The Transverse K\"ahler Structure}\label{TKS}
It turns out that as in Section 6.1 of \cite{BoTo14a} we can lift the K\"ahler data on the orbifolds $(S_n,\grD_{\bfm,N})$ to the transverse K\"ahler data on a quasi-regular Sasaki manifold. Since quasi-regular Reeb fields are dense in $\gt^+_\bfw$ we can extend this transverse data to the entire $\bfw$-cone and as in \cite{BoTo14a} \eqref{gT2} gives a smooth family of Sasakian structures on the join $M_{\bfl,\bfw}$. In the following we shall describe the details.

\subsubsection{Generalized orbifold Calabi construction}
The  generalized Calabi construction was presented in \cite[Section
 2.5]{ACGT04} and further discussed in \cite[Section 2.3]{ACGT11}. Here we make a modest generalization, allowing for the base space to be a normal projective algebraic variety with cyclic orbifold singularities. We first discussed this type of generalization in \cite{BoTo18c} for the purpose of constructing certain explicit Sasaki-Einstein metrics on a class of $7$-manifolds, but here we will not restrict the base to be a Hirzebruch orbifold. 
  
The ingredients in the construction are as follows:
\begin{itemize}
\item 
$[$Base$]$ A K\"ahler orbifold $(N,\grD_N)$ equipped with a 
K\"ahler orbifold metric, $g_{base}$,  whose K\"ahler form, $\omega_{base}=2\pi \omega_N$, satisfies that
$\left[ \omega_N \right]$ is a primitive orbifold K\"ahler class.
\item  
$[$Fiber$]$ A  weighted projective line $(\bbc \bbp ^1_{m^0,m^\infty} = \bbc \bbp ^1_{v^0,v^\infty}/\bbz_{m},g_{\bfm},\omega_{\bfm})$ with orbifold K\"ahler structure $(g_{\bfm},\omega_{\bfm})$ and rational Delzant polytope $[-1,1] 
\subseteq  \bbr^*$ and momentum map $\gz\colon \bbc \bbp ^1_{m^0,m^\infty} \rightarrow [-1,1]$. 
Here $(m^0,m^\infty)=m(v^0,v^\infty)$ and $v^0,v^\infty$ are coprime.
\item A principal $S^1$ orbi-bundle, $P_n \rightarrow (N,\grD_N)$, with a principal
 connection of curvature $n\,\omega_{base}\in \Omega^{1,1}((N,\grD_N),
\bbr)$, where $S^1$ acts  on $\bbc \bbp ^1_{m^0,m^\infty}$, $n\in {\bbz} \setminus \{0\}$, and $\gcd(n,m)=1$. Note that $n \in \,span_\bbz \{ v^0,v^\infty\}$ (since $v^0,v^\infty$ are coprime), so $mn \in \,span_\bbz \{ m^0,m^\infty\}$. 
\item A constant $0< |r| <1$ with the same sign as $n$ 
[ensuring that the $(1,1)$-form $(1/r + \gz)n\,\omega_{base}$ is positive for $\gz \in [-1,1]$].
\end{itemize}

From this data we may define the orbifold
\[
(S_n, \grD_{{\bfm}, N})=P_n\times_{S^1}\bbc \bbp ^1_{m^0,m^\infty} = \mathring{M}\times_{\bbc^*}\bbc \bbp ^1_{m^0,m^\infty}  \rightarrow (N,\grD_N),
\]
where $ \mathring{S_n} = P_n \times_{S^1}\left( \gz^{-1}(-1,1)\right)$. Since the curvature $2$-form
of $P_n$ has type $(1,1)$, $\mathring{S_n}$ is a holomorphic principal
$\bbc^*$ bundle with connection $\theta \in
\Omega^1(\mathring{S_n},\bbr)$ and $(S_n, \grD_{{\bfm}, N})$ is a complex orbifold.

On $\mathring{S_n}$ we define K\"ahler structures of the form
\begin{equation}\label{compatiblemetric}
\begin{aligned}
g &=(1/r + \gz)n\,(\pi^{orb})^*g_{base} + \frac{1}{\Theta(\gz)}d\gz^2 + \Theta(\gz) \theta^2\\
\omega &=  (1/r + \gz)n\,(\pi^{orb})^*\omega_{base} + d\gz \wedge \theta\\
d\theta &=n\,(\pi^{orb})^*\omega_{base},
\end{aligned}
\end{equation}
where $\frac{1}{\Theta(\gz)} = \frac{d^2 U}{d \gz^2}$ and $U$ is the symplectic
potential \cite{Gui94b} of the chosen toric K\"ahler structure $g_{\bfm}$ on $\bbc \bbp ^1_{m^0,m^\infty}$.

The {\em generalized Calabi construction} arises from seeing
\eqref{compatiblemetric} as a blueprint for the construction of various orbifold
K\"ahler metrics on $(S_n, \grD_{{\bfm}, N})$ by choosing various smooth functions $\Theta(\gz)$ on $(-1,1)$ satisfying that
\begin{itemize}
\item \textup{[boundary values]} 
the following endpoint conditions are satisfied
\begin{equation}\label{eq:toricboundary}
\Theta(\pm 1) =0\qquad {\rm and}\qquad \Theta\,'(-1) =2/m^\infty\qquad {\rm and}\qquad \Theta\,'(1)=-2/m^0;
\end{equation}
\item \textup{[positivity]} 
the function $\Theta(\gz)$ is positive for $\gz\in(-1,1)$.
\end{itemize}
Metrics
constructed this way are called {\em compatible K\"ahler metrics} with {\em compatible K\"ahler classes} parametrized by
$r$. If $g_{base}$ has constant curvature we call the compatible metrics/classes {\em admissible}. Moving forward we will assume to be in that case.

Note that due to \cite{ApCaGa06} (see also (46) of \cite{BoTo14a}), we have that if
$g_{base}= 2\pi g_N$ has complex dimension $d_N$ and constant scalar curvature $2d_N A_N$, then on $\mathring{S_n}$ the scalar curvature of the admissible metric in \eqref{compatiblemetric} is given by
\begin{equation}\label{scalarcurv}
Scal = \frac{2d_N A_Nr}{n(1+r \gz)} - \frac{F''(\gz)}{(1+r\gz)^{d_N}},
\end{equation}
where $F(\gz) = (1+r\gz)^{d_N} \Theta(\gz)$.
[Note that ``$A_N$'' plays the same role as ``$A$'' introduced in Section 6.2 of \cite{BoTo14a}.]

\begin{remark}
Note that the Calabi toric metrics in \cite{Leg09} are special cases of the above ansatz. Such orbifold constructions also appear explicitly in Sections  3.5 and 4.4.2  of \cite{ACGL17}.

\end{remark}

On each uniformizing cover of $(S_n, \grD_{{\bfm}, N})$ we have (following Section 1.3 of \cite{ACGT08}) that 
$$\phi^*[\gro] = 2\pi \frac{n}{r}\phi^*(\pi^{orb})^* [\gro_N] + 2\pi( \phi^*PD(D^0)+ \phi^*PD(D^\infty)).$$
Since $\phi^*PD(D^0) - \phi^*PD(D^\infty) = n \phi^*(\pi^{orb})^*[ \gro_N]$, this may be written as
$$\phi^*[\gro] = 2\pi (\frac{n}{r} + n)\phi^*(\pi^{orb})^* [\gro_N] + 4\pi\phi^*PD(D^\infty)$$
or 
\begin{equation}\label{admissibleclass}
[\gro] = 4\pi\left( \frac{n(1+r)}{2r} (\pi^{orb})^* [\gro_N] + PD(D^\infty)\right).
\end{equation}

\subsubsection{The transverse K\"ahler structure}

First note that a primitive orbifold class $[\gro_{N}]$ is obtained from a primitive integer class $[\gro_{N}]_I$, viz
\begin{equation}\label{primclasseqn}
[\gro_{N}]=\frac{[\gro_{N}]_I}{\Upsilon_N}.
\end{equation}

\begin{lemma}\label{Kahclass}
The induced primitive orbifold K\"ahler form $\gro_{n,\bfm,N}$ on the orbifold $(S_n,\grD_{N,\bfm})$ satisfies
$$[\gro_{n,\bfm,N}]=\frac{l^0w^0v^\infty (\pi^{orb})^*[\gro_N]_I+s\Upsilon_N PD(D^\infty)}{\gcd(s\Upsilon_N,w^0v^\infty l^0) mv^0v^\infty\Upsilon_N}$$
where $[\gro_N]_I$ is a primitive integer class.
\end{lemma}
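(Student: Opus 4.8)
The plan is to track the primitive orbifold Kähler class on $(S_n,\grD_{N,\bfm})$ by starting from the already-computed class in \eqref{admissibleclass} and rescaling to make it primitive as an \emph{orbifold} class, using the relation \eqref{primclasseqn} between primitive orbifold classes and primitive integer classes together with the order formula $\Upsilon = mv^0v^\infty\Upsilon_N$ from Lemma \ref{joinorderlem}. First I would substitute $[\gro_N]=[\gro_N]_I/\Upsilon_N$ into \eqref{admissibleclass} to express the Kähler class of the compatible metric as a $\bbq$-linear combination of $(\pi^{orb})^*[\gro_N]_I$ and $PD(D^\infty)$, namely a rational multiple of $\frac{n(1+r)}{2r\Upsilon_N}(\pi^{orb})^*[\gro_N]_I + PD(D^\infty)$ (dropping the irrelevant overall $4\pi$). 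At this point I need the specific value of $r$ that the join construction selects; from Theorem \ref{orbquot} the transverse Kähler class is $nc_1^{orb}(M/N)=n[\gro_N]$ up to the fibre contribution, and matching this against \eqref{admissibleclass} pins down $\frac{n(1+r)}{2r}$. I expect this to come out so that $\frac{n(1+r)}{2r\Upsilon_N}$ simplifies, after inserting $n = l^0(w^0v^\infty-w^\infty v^0)/s$ and the corresponding value of $r$, to the ratio $\frac{l^0w^0v^\infty}{s\Upsilon_N}$ appearing in the statement — i.e. the two coefficients in the numerator are $l^0w^0v^\infty$ and $s\Upsilon_N$ once a common factor is cleared.

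Next I would extract the primitive orbifold class. Writing the class as $\frac{1}{D}\bigl(l^0w^0v^\infty(\pi^{orb})^*[\gro_N]_I + s\Upsilon_N\, PD(D^\infty)\bigr)$ for the smallest positive integer $D$ making this an integral \emph{orbifold} class, one has $D = \gcd(s\Upsilon_N, w^0v^\infty l^0)\cdot \Upsilon$ where $\Upsilon = mv^0v^\infty\Upsilon_N$ is the order of the orbifold $(S_n,\grD_{N,\bfm})$ by Lemma \ref{joinorderlem}. The first factor removes the gcd of the two numerator coefficients so that the bracketed combination becomes primitive in $H^2_{orb}(S_n,\bbz)$; the second factor is forced by \eqref{primclasseqn} applied on $(S_n,\grD_{N,\bfm})$, since a primitive orbifold class is exactly $1/\Upsilon$ times a primitive integer orbifold class. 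Combining, $D = \gcd(s\Upsilon_N, w^0v^\infty l^0)\, mv^0v^\infty\Upsilon_N$, which is precisely the denominator in the statement.

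The step I expect to be the main obstacle is verifying that the bracketed integer combination $l^0w^0v^\infty(\pi^{orb})^*[\gro_N]_I + s\Upsilon_N\, PD(D^\infty)$ is genuinely \emph{primitive} as an integral orbifold class on $(S_n,\grD_{N,\bfm})$ — i.e. that no further common divisor can be extracted beyond the explicit $\gcd$. This requires knowing that $(\pi^{orb})^*[\gro_N]_I$ and $PD(D^\infty)$ (or $PD(D^0)$) generate the relevant part of $H^2_{orb}$ freely, which follows from the structure of the projective orbibundle $\bbp(\BOne\oplus L_n)$ over $(N,\grD_N)$: on each uniformizing cover the cohomology splits as a pullback piece plus a fibre piece, exactly the Leray–Hirsch-type splitting used in Section 1.3 of \cite{ACGT08} and already invoked above \eqref{admissibleclass}. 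Granting that splitting, primitivity of the combination reduces to the coprimality facts $\gcd(l^0,l^\infty)=1$, $\gcd(m,n)=1$ (the Remark after Theorem \ref{orbquot}), and $\gcd(m, (w^0v^\infty-w^\infty v^0)/s)=1$, together with $s \mid l^\infty$; a short number-theoretic check then shows the only common divisor of the two coefficients $l^0w^0v^\infty$ and $s\Upsilon_N$ over the relevant lattice is the stated $\gcd$, and the lemma follows.
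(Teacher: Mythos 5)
Your route is genuinely different from the paper's, and it reverses the paper's logical order. The paper proves this lemma directly on the join: using diagram \eqref{t3comdia} it finds a generator $\grg\in H^2(M_{\bfl,\bfw},\bbz)$ with $(\pi^{orb}\circ p_\bfv)^*[\gro_N]=l^\infty\grg$, solves a small linear system coming from Lemma \ref{c1orblem} and the fiber relation $PD(D^0)-PD(D^\infty)=n(\pi^{orb})^*[\gro_N]$ to get $p_\bfv^*PD(D^\infty)=-mw^0v^\infty l^0\grg$, and then imposes that $[\gro_{n,\bfm,N}]$ lies in $\ker p_\bfv^*$ (since $p_\bfv^*\gro_{n,\bfm,N}$ is a multiple of $d\eta_\bfv$, hence exact); this yields $k_1\Upsilon_N l^\infty=k_2mw^0v^\infty l^0$, i.e.\ $k_1/k_2=w^0v^\infty l^0/(s\Upsilon_N)$, and primitivization finishes. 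Your algebra for the coefficient $\frac{n(1+r)}{2r}=\frac{l^0w^0v^\infty}{s}$ and for the denominator $\gcd(s\Upsilon_N,w^0v^\infty l^0)\,mv^0v^\infty\Upsilon_N$ is correct, and your identification of the lattice/primitivity issue (and its resolution via the Leray--Hirsch-type splitting) matches what the paper implicitly assumes when it writes $[\gro_{n,\bfm,N}]$ as $\frac{k_1(\pi^{orb})^*[\gro_N]_I+k_2PD(D^\infty)}{mv^0v^\infty\Upsilon_N}$ with integer $k_1,k_2$.

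The genuine gap is in how you pin down $r$. Theorem \ref{orbquot} determines only $n$ (via $c_1^{orb}(L_n)=n[\gro_N]$); it says nothing about which member of the one-parameter family \eqref{admissibleclass} of admissible classes on $S_n$ is actually the quotient K\"ahler class of $\xi_\bfv$. In the paper the value $r=\frac{w^0v^\infty-w^\infty v^0}{w^0v^\infty+w^\infty v^0}$ is \emph{derived} in Lemma \ref{Kahclass-adm} precisely by requiring proportionality to the formula of the present lemma, so if you obtain $r$ that way your argument is circular. To make your approach self-contained you need an independent determination of $r$, for instance from the moment-map relation \eqref{dvw} (equivalently, the computation of $\eta_\bfw(\xi_\bfv)$ and the identity $\omega|_N=2\pi n(\gtz+r^{-1})\omega_N$, which come from the join data and not from the lemma). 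With that input supplied, your argument closes and is arguably shorter than the paper's, at the cost of assuming up front that the quotient metric is admissible; the paper's pullback argument avoids any appeal to the admissible ansatz and produces the ratio $k_1/k_2$ purely cohomologically.
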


\begin{proof}
The analogue of Diagram (35) in \cite{BoTo14a} is
\begin{equation}\label{t3comdia}
\begin{matrix} &&  M\times L(l^\infty;l^0w^0,l^0w^\infty) && \\
                          &&\decdnar{\pi_L} && \\
                        && M_{\bfl,\bfw}&&\\
                        &&&& \\
                          &\swarrow p_\bfw &&\searrow p_\bfv & \\
                          &&&& \\
                         (N,\grD_N)\times\bbc\bbp^1[\bfw] &&&& (S_n,\grD_{\bfm,N}) \\
                         &&&& \\
                         &pr_1 \searrow &&\swarrow \pi^{orb} & \\
                         && (N,\grD_N) &&
\end{matrix}
\end{equation}
where $p_\bfw,p_\bfv,pr_1,\pi^{orb}$ are the obvious projections. The fibers of the map $\pi^{orb}$ are orbifolds of the form $\bbc\bbp^1[\bfv]/\bbz_m$. As is \cite{BoTo14a} and by using the diagram \eqref{t3comdia} we have that there is a generator $\grg \in H^2(M_{\bfl,\bfw}, \bbz)$ such that
$(pr_1\circ p_\bfw)^* [\gro_N]=l^\infty\grg$ and $(pr_2\circ p_\bfw)^* [\gro_\bfw]=-l^0\grg$. Now (again, similarly to \cite{BoTo14a}), using Lemma \ref{c1orblem} together with
the fact that 
$$c_1^{orb}( (N,\grD_N)\times\bbc\bbp^1[\bfw] )= pr_1^*c_1^{orb}(N,\grD_N) + |\bfw| pr_2^*[\gro_\bfw],$$
and that, as the diagram \eqref{t3comdia} suggests, both pull-backs to $M_{\bfl,\bfw}$ of the orbifold Chern class $c_1^{orb}(N,\grD_N)$ should agree, we arrive at the equation
$$\frac{1}{m^0}p^*_\bfv PD(D^0)+\frac{1}{m^\infty}p^*_\bfv PD(D^\infty) =-l^0|\bfw|\grg.$$
On each uniformizing cover we also have that 
$$\varphi^*PD(D^0)-\varphi^*PD(D^\infty) = n \varphi^*(\pi^{orb}_\bfv)^*[\gro_N]$$ 
and hence using $(\pi^{orb}\circ p_\bfv)^*[\gro_N]=(pr_1\circ p_\bfw)^* [\gro_N]=l^\infty\grg$ we end up with the system

\begin{eqnarray}\label{sectiondiff}
p^*_\bfv PD(D^0)-p^*_\bfv PD(D^\infty)&=&l^\infty n\grg, \\
\frac{1}{m^0}p^*_\bfv PD(D^0)+\frac{1}{m^\infty}p^*_\bfv PD(D^\infty) &=&-l^0|\bfw|\grg.
\end{eqnarray}
As in \cite{BoTo14a} the solution to this system is
\begin{eqnarray}\label{solnsectiondiff}
p^*_\bfv PD(D^\infty)&=&-mw^0v^\infty l^0  \grg \\
p^*_\bfv PD(D^0) &=& -mw^\infty v^0 l^0\grg
\end{eqnarray}

We keep diagram \eqref{t3comdia} in mind.
Now the induced orbifold K\"ahler class on $(S_n,\grD_{N,\bfm})$ is in the kernel of $p_\bfv^*$, and as in the proof of Lemma 3.10 in \cite{BoTo14a} both $\{(\pi^{orb})^*[\gro_N],PD(D^0)\}$ and $\{(\pi^{orb})^*[\gro_N],PD(D^\infty)\}$ span $\ker (p_\bfv\circ\pi_L)^*$. Now we can write a primitive orbifold K\"ahler form $\gro_{n,\bfm,N}$ as a primitive integer K\"ahler form divided by the order $\Upsilon$ of the orbifold. So on $(S_n,\grD_{N,\bfm})$ using Lemma \ref{joinorderlem} we write
$$[\gro_{n,\bfm,N}]=\frac{k_1(\pi^{orb})^*[\gro_N]_I+k_2PD(D^\infty)}{mv^0v^\infty\Upsilon_N}$$
for some positive integers $k_1,k_2$. 
Since $[\gro_{n,\bfm,N}]$ is in the kernel of $p_\bfv^*$ we have
$$k_1\Upsilon_Nl^\infty-k_2mw^0v^\infty l^0=0.$$
This determines $k_1/k_2=mw^0v^\infty l^0/(l^\infty\Upsilon_N)=mw^0v^\infty l^0/(ms\Upsilon_N)=w^0v^\infty l^0/(s\Upsilon_N)$.
Ensuring primitivity then gives the result.
\end{proof}

\begin{lemma}\label{Kahclass-adm}
The induced primitive orbifold K\"ahler form $\gro_{n,\bfm,N}$ on the orbifold $(S_n,\grD_{N,\bfm})$ satisfies
$$[\gro_{n,\bfm,N}]=\frac{s} {4\pi \gcd(s\Upsilon_N,w^0v^\infty l^0) mv^0v^\infty}[\gro],$$
where $[\gro]$ is the admissible K\"ahler class from \eqref{admissibleclass} with $r=\frac{w^0v^\infty-w^\infty v^0}{w^0v^\infty+w^\infty v^0}$.
\end{lemma}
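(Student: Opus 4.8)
The plan is to derive the statement by directly comparing the two descriptions of the induced primitive orbifold class: the intrinsic one from Lemma \ref{Kahclass} and the admissible one from \eqref{admissibleclass}, after specializing $r$ to the prescribed value. First I would use \eqref{primclasseqn} to rewrite \eqref{admissibleclass} purely in terms of the primitive integer class $[\gro_N]_I$ and the class $PD(D^\infty)$ that already appear in Lemma \ref{Kahclass}, namely
$$[\gro] = 4\pi\left( \frac{n(1+r)}{2r\,\Upsilon_N}\, (\pi^{orb})^* [\gro_N]_I + PD(D^\infty)\right).$$

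Next I would evaluate the coefficient $\frac{n(1+r)}{2r}$ at $r=\frac{w^0v^\infty-w^\infty v^0}{w^0v^\infty+w^\infty v^0}$. Writing $\delta=w^0v^\infty-w^\infty v^0$, one has $1+r=\frac{2w^0v^\infty}{w^0v^\infty+w^\infty v^0}$ and $2r=\frac{2\delta}{w^0v^\infty+w^\infty v^0}$, so that $\frac{1+r}{2r}=\frac{w^0v^\infty}{\delta}$; combining this with the value $n=l^0\delta/s$ supplied by Theorem \ref{orbquot} gives $\frac{n(1+r)}{2r}=\frac{l^0 w^0 v^\infty}{s}$. Substituting back yields
$$[\gro] = \frac{4\pi}{s\,\Upsilon_N}\Bigl( l^0w^0v^\infty\, (\pi^{orb})^* [\gro_N]_I + s\,\Upsilon_N\, PD(D^\infty)\Bigr).$$
The expression in parentheses is exactly the numerator of the formula for $[\gro_{n,\bfm,N}]$ in Lemma \ref{Kahclass}. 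Hence substituting $l^0w^0v^\infty (\pi^{orb})^* [\gro_N]_I + s\Upsilon_N PD(D^\infty)=\tfrac{s\Upsilon_N}{4\pi}[\gro]$ into that formula and cancelling the common factor $\Upsilon_N$ gives precisely $[\gro_{n,\bfm,N}]=\dfrac{s}{4\pi\,\gcd(s\Upsilon_N,w^0v^\infty l^0)\,mv^0v^\infty}[\gro]$, which is the claim.

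The computation itself is routine; the one point that needs care is the bookkeeping between orbifold cohomology classes and classes on the local uniformizing covers. Equation \eqref{admissibleclass} was obtained after applying a uniformizing pullback $\phi^*$, so I would check that the identity being proved is genuinely the one descending to $H^2$ of the orbifold $(S_n,\grD_{N,\bfm})$; since both sides of the claimed equality are the same rational multiple of the single class $l^0w^0v^\infty (\pi^{orb})^*[\gro_N]_I + s\Upsilon_N PD(D^\infty)$, this descent is automatic and no further argument is needed. A minor sign check is also worth recording: the generalized Calabi construction requires $0<|r|<1$ with $r$ of the same sign as $n$, and since both $n$ and the prescribed $r$ carry the sign of $\delta=w^0v^\infty-w^\infty v^0$, the hypothesis holds and the ratios above are unaffected (equivalently, one may replace $\delta$ by $|\delta|$ everywhere, matching the normalization of $s$ and $n$ in Theorem \ref{orbquot}).
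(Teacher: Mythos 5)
Your proposal is correct and follows essentially the same route as the paper's own proof: rewrite \eqref{admissibleclass} via \eqref{primclasseqn} in terms of $[\gro_N]_I$ and $PD(D^\infty)$, substitute $n=l^0(w^0v^\infty-w^\infty v^0)/s$ and the prescribed $r$ to get $\frac{n(1+r)}{2r}=\frac{l^0w^0v^\infty}{s}$, and match the result against the numerator in Lemma \ref{Kahclass}. The added remarks on descent from uniformizing covers and on the sign of $r$ versus $n$ are sensible but do not change the argument.
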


\begin{proof}
From \eqref{admissibleclass} and \eqref{primclasseqn} we have that
$$
\begin{array}{ccl}
[\gro] & = & 4\pi\left( \frac{n(1+r)}{2r} \frac{(\pi^{orb})^*[\gro_{N}]_I}{\Upsilon_N} + PD(D^\infty)\right)\\
\\
&=& \frac{4\pi}{s \Upsilon_N}\left( \frac{s n(1+r)}{2r} (\pi^{orb})^*[\gro_{N}]_I + s \Upsilon_N PD(D^\infty)\right)\\
\\
&=&  \frac{4\pi}{s \Upsilon_N}\left( \frac{l^0(w^0v^\infty-w^\infty v^0)(1+r)}{2r} (\pi^{orb})^*[\gro_{N}]_I + s \Upsilon_N PD(D^\infty)\right).
\end{array}
$$
This is a multiple of $[\gro_{n,\bfm,N}]$ iff $r=\frac{w^0v^\infty-w^\infty v^0}{w^0v^\infty+w^\infty v^0}$ and with this value for $r$, we get
$$
\begin{array}{ccl}
[\gro]  &=&  \frac{4\pi}{s \Upsilon_N}\left( \frac{l^0(w^0v^\infty-w^\infty v^0)(1+r)}{2r} (\pi^{orb})^*[\gro_{N}]_I + s \Upsilon_N PD(D^\infty)\right)\\
\\
&=& \frac{4\pi}{s \Upsilon_N}\left( l^0w^0v^\infty(\pi^{orb})^*[\gro_{N}]_I + s \Upsilon_N PD(D^\infty)\right)\\
\\
&=& \frac{4\pi \gcd(s\Upsilon_N,w^0v^\infty l^0) mv^0v^\infty}{s}\frac{\left( l^0w^0v^\infty(\pi^{orb})^*[\gro_{N}]_I + s \Upsilon_N PD(D^\infty)\right)}{\gcd(s\Upsilon_N,w^0v^\infty l^0) mv^0v^\infty \Upsilon_N}\\
\\
&=&\frac{4\pi \gcd(s\Upsilon_N,w^0v^\infty l^0) mv^0v^\infty}{s} [\gro_{n,\bfm,N}].
\end{array}
$$
The result now follows.
\end{proof}
Note that since $\gro_{n,\bfm,N}$ is the induced primitive orbifold K\"ahler form on $(S_n,\grD_{N,\bfm})$, we could write
$$[\gro_{n,\bfm,N}]= \frac{[\gro_{n,\bfm,N}]_I}{mv^0v^\infty\Upsilon_N},$$
where $$[\gro_{n,\bfm,N}]_I=  \frac{l^0w^0v^\infty (\pi^{orb})^*[\gro_N]_I+s\Upsilon_N PD(D^\infty)}{\gcd(s\Upsilon_N,w^0v^\infty l^0)}.$$
Using this notation we can formulate the equation in Lemma \ref{Kahclass-adm} as
$$[\gro_{n,\bfm,N}]_I=\frac{s \Upsilon_N} {4\pi \gcd(s\Upsilon_N,w^0v^\infty l^0)}[\gro].$$

Lemma \ref{Kahclass-adm} tells us that the class of the induced primitive orbifold K\"ahler form $\gro_{n,\bfm,N}$ on the orbifold $(S_n,\grD_{N,\bfm})$ 
is a constant multiple of the admissible K\"ahler class $[\omega]$ with the prescribed value of $r$. Since
$(\pi^{orb})^*\gro_{n,\bfm,N}$ is in turn a constant multiple of $d\eta_\bfv$,
this means that for the admissible K\"ahler form, we may say that (up to isotopy)
\begin{equation}\label{omega&eta}
p_\bfv^*\omega = b d\eta_\bfv
\end{equation}
for some positive constant $b$. To determine the value of $b$ we can follow the proof of Proposition 6.2 of \cite{BoTo14a}. Since our notation here is slightly different we will summarize the argument in broad strokes:

As in \cite{BoTo14a} we have the moment map of the lifted circle action of the moment map $\gz$, $\gtz:M_{\bfl,\bfw}\ra{1.6} [-1,1]$ and
\begin{equation}\label{dvw}
d\eta_\bfv|_\cald = \frac{w^0v^\infty-w^\infty v^0}{2v^0v^\infty}(\gtz+r^{-1}) d\eta_\bfw|_\cald.
\end{equation}
Moreover, identifying $N$ with the zero section of $S_n$, $\omega|_N = 2\pi n (\gtz+r^{-1})  \omega_N$.
Now comparing coefficients of the pullback of $\omega_N$ on both sides of equation \eqref{omega&eta} and using equation \eqref{dvw} with the commutative diagram \eqref{t3comdia} (giving that the two pull-backs of $\gro_N$ should agree), we get the equation
$$2\pi n= b \frac{w^0v^\infty-w^\infty v^0}{2v^0v^\infty} l^0.$$
Using the value of $n$ from Theorem \ref{orbquot} we arrive at  $b= \frac{4\pi m v^0 v^\infty}{l^\infty}$ and thus we have
that - up to isotopy -
the transverse K\"ahler structure $(g^T,\omega^T)$ satisfies
$$
\omega^T = d\eta_\bfv = \frac{l^\infty}{4\pi} \frac{p_\bfv^*\omega}{mv^0v^\infty}
$$
and 
$$
g^T= \frac{l^\infty}{4\pi} \frac{p_\bfv^*g}{mv^0v^\infty}.
$$

Thus, similarly to Section 6 of \cite{BoTo14a} we can lift the admissible data to $M_{\bfl,\bfw}$ and we have the following theorem:
\begin{theorem}\label{transKahthm}
The transverse K\"ahler metric and K\"ahler form are (possibly up to isotopy) given by
\begin{eqnarray}\label{gT2}
g^T&=&\frac{l^\infty}{4\pi}\Bigl(\frac{2\pi l^0(w^0v^\infty-w^\infty v^0)}{l^\infty v^0v^\infty}(r^{-1}+\gtz)(\pi^{orb}\circ p_\bfv)^*g_N +\frac{d\gtz^2}{\tilde{\Theta}(\gtz)} +\tilde{\Theta}(\gtz)\ttheta^2\Bigr) \notag \\
\gro^T&=&\frac{l^\infty}{4\pi}\Bigl(\frac{2\pi l^0(w^0v^\infty-w^\infty v^0)}{l^\infty v^0v^\infty}(r^{-1}+\gtz)(\pi^{orb}\circ p_\bfv)^*\gro_N +d\gtz\wedge \ttheta\Bigr)
\end{eqnarray}
where 
$$r=\frac{w^0v^\infty-w^\infty v^0}{w^0v^\infty+w^\infty v^0},\quad \tTheta=mv^0v^\infty p_\bfv^*\Theta,\quad  \ttheta=\frac{p_\bfv^*\theta}{mv^0v^\infty},$$ 
and  that $\tTheta$ satisfies the boundary conditions $\tTheta(\pm 1)=0,$ $\tTheta'(-1)=2v^0$ and $\tTheta'(1)=-2v^\infty$. 

The quasi-regular transverse K\"ahler structure extends smoothly to the full $\bfw$ Sasaki cone $\gt^+_\bfw$ and converges to the K\"ahler forms of the reducible Sasakian structure \cite{BHLT16} as $\bfv\rightarrow \bfw$.
\end{theorem}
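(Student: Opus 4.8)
The plan is to assemble ingredients that are already essentially in place: the quotient orbifold $(S_n,\grD_{\bfm,N})$ of Theorem~\ref{orbquot}, the generalized orbifold Calabi data \eqref{compatiblemetric}--\eqref{eq:toricboundary} on it, and the identification $\omega^T = d\eta_\bfv = \tfrac{l^\infty}{4\pi}\,\tfrac{p_\bfv^*\omega}{mv^0v^\infty}$ (up to isotopy) derived just before the statement out of Lemma~\ref{Kahclass-adm} and the moment-map computation \eqref{omega&eta}--\eqref{dvw}, following Proposition~6.2 of \cite{BoTo14a}. So first I would take the compatible admissible K\"ahler pair $(g,\omega)$ of \eqref{compatiblemetric} on $\mathring{S_n}$ for the specific $r=\tfrac{w^0v^\infty-w^\infty v^0}{w^0v^\infty+w^\infty v^0}$ forced by Lemma~\ref{Kahclass-adm}, pull it back by $p_\bfv$, and normalize by $\tfrac{l^\infty}{4\pi m v^0v^\infty}$. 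Taking $\gtz$ to be the lifted momentum map (already introduced), $\ttheta=p_\bfv^*\theta/(mv^0v^\infty)$ and $\tTheta=mv^0v^\infty\,p_\bfv^*\Theta$, the fibre terms $\tfrac1{\Theta}d\gz^2+\Theta\theta^2$ scale homogeneously into $\tfrac1{\tTheta}d\gtz^2+\tTheta\ttheta^2$, while the base term acquires the coefficient $\tfrac{2\pi n}{mv^0v^\infty}$, which using $n=l^0(w^0v^\infty-w^\infty v^0)/s$, $l^\infty=ms$ and $\omega_{base}=2\pi\omega_N$ from Theorem~\ref{orbquot} equals $\tfrac{2\pi l^0(w^0v^\infty-w^\infty v^0)}{l^\infty v^0v^\infty}$; the same bookkeeping turns $d\theta=n(\pi^{orb})^*\omega_{base}$ into the stated $d\ttheta$. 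Regrouping gives exactly \eqref{gT2}.

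Next I would transcribe the boundary behaviour. The endpoint conditions \eqref{eq:toricboundary} say $\Theta(\pm1)=0$, $\Theta'(-1)=2/m^\infty$, $\Theta'(1)=-2/m^0$, and since $(m^0,m^\infty)=m(v^0,v^\infty)$ the rescaling by $mv^0v^\infty$ yields $\tTheta(\pm1)=0$, $\tTheta'(-1)=mv^0v^\infty\cdot\tfrac{2}{mv^\infty}=2v^0$, $\tTheta'(1)=-mv^0v^\infty\cdot\tfrac{2}{mv^0}=-2v^\infty$, and positivity on $(-1,1)$ persists because the scale factor is positive. This is precisely the prescribed boundary data, and it is this $\bfv$-normalized version that makes the lifted structure genuinely transverse-K\"ahler on $M_{\bfl,\bfw}$ rather than merely on an orbifold quotient.

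The one point that needs real care is the phrase ``up to isotopy''. Lemma~\ref{Kahclass-adm} equates only cohomology classes, so to obtain \eqref{omega&eta} on the nose one invokes the transverse (basic) $dd^c$-lemma in the quasi-regular/orbifold setting (cf.\ \cite{BG05}): inside the fixed transverse holomorphic structure on $M_{\bfl,\bfw}$ two basic K\"ahler forms in the same basic class differ by $dd^c\varphi$ for a $\bbt$-invariant basic function $\varphi$, and $\eta_\bfv\mapsto\eta_\bfv+d^c\varphi$ is a contact isotopy of the type discussed in Section~1, leaving $\gt^+_\bfw$ and the CR structure unchanged. Verifying that $\varphi$ is genuinely basic (constant along $\xi_\bfv$) and that the orbifold uniformizing covers are compatible with this deformation is the technical heart; everything else is the substitution above. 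I expect this to be the main obstacle.

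Finally, for the last two sentences: the right side of \eqref{gT2} is a real-analytic function of the ray $\bfv\in\gt^+_\bfw$ --- equivalently of $r$, which determines the ratio $v^0:v^\infty$, with $\tTheta$ chosen to depend smoothly on its boundary data --- and the quasi-regular Reeb fields are dense in $\gt^+_\bfw$, so the family of transverse K\"ahler structures extends smoothly to the whole $\bfw$-cone, as anticipated in Section~\ref{TKS}. For the limit $\bfv\to\bfw$, although $r\to0$ and $n\to0$ (so $S_n$ degenerates to the product $N\times\bbc\bbp^1[\bfw]$), the combination $\tfrac{w^0v^\infty-w^\infty v^0}{r}=w^0v^\infty+w^\infty v^0$ stays finite, so in \eqref{gT2} the full coefficient of $(\pi^{orb}\circ p_\bfv)^*g_N$ tends to a $\gtz$-independent positive constant while the $\gtz$-proportional piece dies; hence $\gro^T$ limits to a constant multiple of $(\pi^{orb}\circ p_\bfv)^*\gro_N$ plus the standard Fubini--Study form on the fibre $\bbc\bbp^1[\bfw]$, which is exactly the K\"ahler form of the reducible product-type Sasakian structure of \cite{BHLT16}. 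Tracking this degeneration so that the normalizations match the reducible structure precisely is the only subtlety in this last step.
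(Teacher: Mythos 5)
Your proposal follows essentially the same route as the paper: the theorem is obtained exactly by pulling back the admissible data \eqref{compatiblemetric} via $p_\bfv$ and rescaling by $\frac{l^\infty}{4\pi m v^0v^\infty}$, with the value of $r$ and the normalization constant pinned down by Lemma \ref{Kahclass-adm} and the moment-map comparison \eqref{omega&eta}--\eqref{dvw}, and your arithmetic for the base coefficient, the fibre terms, the boundary data $\tTheta'(-1)=2v^0$, $\tTheta'(1)=-2v^\infty$, and the limit $\bfv\to\bfw$ all checks out against the text surrounding the theorem. Your explicit appeal to the transverse $dd^c$-lemma to justify ``up to isotopy'' is a welcome elaboration of a point the paper leaves implicit (deferring to Proposition 6.2 of \cite{BoTo14a}), but it does not change the argument.
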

It is clear from the explicit forms \eqref{gT2} that they extend smoothly to the full $\bfw$ Sasaki cone $\gt^+_\bfw$. And since
$$\frac{l^\infty}{4\pi}\frac{2\pi l^0(w^0v^\infty-w^\infty v^0)}{l^\infty v^0v^\infty}r^{-1}\longrightarrow l^0$$
we see that as $\bfv\rightarrow \bfw$, the Sasakian structure converges to the reducible Sasakian structure defined by the join.
So the transverse K\"ahler structure in Theorem \ref{transKahthm} extends smoothly to the full $\bfw$ Sasaki cone $\gt^+_\bfw$ and converges to the K\"ahler forms of the reducible Sasakian structure \cite{BHLT16} as $\bfv\rightarrow \bfw$.

By combining Lemma \ref{Kahclass-adm} and Theorem \ref{transKahthm}, we have that
$$
\gro^T=d\eta_\bfv=\frac{l^\infty}{4\pi m v^0 v^\infty}\frac{4\pi \gcd(s\Upsilon_N,w^0v^\infty l^0) mv^0v^\infty}{s} p^*_\bfv \gro_{n,\bfm,N},
$$
which gives the corollary below.

\begin{corollary}\label{transKahformlem}
The transverse K\"ahler structure \eqref{gT2} satisfies
\begin{equation}\label{transKah}
\gro^T=d\eta_\bfv=m\gcd(s\Upsilon_N,w^0v^\infty l^0) p^*_\bfv \gro_{n,\bfm,N}.
\end{equation}

\end{corollary}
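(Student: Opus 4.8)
The plan is to obtain the formula as an immediate consequence of Theorem \ref{transKahthm} together with Lemma \ref{Kahclass-adm}; the corollary is really just a consolidation of these two results, so the proof amounts to a single substitution followed by an arithmetic simplification.

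First I would record precisely what the preceding material gives. Theorem \ref{transKahthm}, together with the computation of the normalizing constant $b=\frac{4\pi m v^0 v^\infty}{l^\infty}$ carried out just before it (comparing the coefficients of $(\pi^{orb}\circ p_\bfv)^*\gro_N$ in \eqref{omega&eta} by means of \eqref{dvw} and the value of $n$ from Theorem \ref{orbquot}), yields, up to transverse isotopy,
$$\gro^T=d\eta_\bfv=\frac{l^\infty}{4\pi m v^0 v^\infty}\,p_\bfv^*\gro,$$
where $\gro$ is the admissible K\"ahler form of \eqref{compatiblemetric} whose class is \eqref{admissibleclass} with $r=\frac{w^0v^\infty-w^\infty v^0}{w^0v^\infty+w^\infty v^0}$.

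Next I would invoke Lemma \ref{Kahclass-adm}, which for exactly this value of $r$ gives $[\gro]=\frac{4\pi \gcd(s\Upsilon_N,w^0v^\infty l^0)\,mv^0v^\infty}{s}[\gro_{n,\bfm,N}]$. Since $\gro$ and $\gro_{n,\bfm,N}$ are the fixed (admissible, respectively induced primitive orbifold) K\"ahler forms, and the transverse K\"ahler form is determined up to isotopy by its basic class, this permits replacing $p_\bfv^*\gro$ by $\frac{4\pi \gcd(s\Upsilon_N,w^0v^\infty l^0)\,mv^0v^\infty}{s}\,p_\bfv^*\gro_{n,\bfm,N}$ in the displayed identity for $\gro^T$. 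After substitution the factors $4\pi$ and $mv^0v^\infty$ cancel, leaving the coefficient $\frac{l^\infty}{s}\gcd(s\Upsilon_N,w^0v^\infty l^0)$; finally the defining relation $l^\infty=ms$ from Theorem \ref{orbquot} turns this into $m\,\gcd(s\Upsilon_N,w^0v^\infty l^0)$, which is the claimed equality.

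The only point requiring any care is the passage from equality of basic cohomology classes to equality of the actual transverse K\"ahler forms — the ``up to isotopy'' qualifier — but this is already dealt with in Theorem \ref{transKahthm} and the paragraph preceding it, where the explicit matching of the $d\gtz\wedge\ttheta$ term (not merely its class) pins down the form; consequently no genuine obstacle remains at the level of this corollary, and everything else is bookkeeping.
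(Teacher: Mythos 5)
Your argument is exactly the paper's: combine the identity $\gro^T=d\eta_\bfv=\frac{l^\infty}{4\pi m v^0 v^\infty}p_\bfv^*\gro$ from Theorem \ref{transKahthm} with Lemma \ref{Kahclass-adm}, cancel the factors $4\pi$ and $mv^0v^\infty$, and use $l^\infty=ms$ to obtain the coefficient $m\gcd(s\Upsilon_N,w^0v^\infty l^0)$. The proposal is correct and matches the paper's derivation, including the (appropriate) handling of the ``up to isotopy'' issue.
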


\begin{remark}\label{primrem2}
We see, as discussed in Remark \ref{primrem}, the quotient orbifold $(S_n,\grD_{\bfm,N})$ of the primitive Sasakian structure on $M_{\bfl,\bfw}$ is not necesarily primitive. 
\end{remark}

\subsection{Extremal, CSC, and KE admissible metrics on $(S_n,\grD_{\bfm,N})$}\label{kahlergeometry}
Completely similar to Sections 5.1-5.3 of \cite{BoTo14a} (which is turn is based on the admissible construction of Apostolov, Calderbank, Gauduchon, and T{\o}nnesen-Friedman \cite{ApCaGa06,ACGT04,ACGT08}) we may now impose geometric conditions on the metric $g$ in \eqref{compatiblemetric}.
Using $F(\gz) = (1+r\gz)^{d_N} \Theta(\gz)$ as in \eqref{scalarcurv} above, we then have
\begin{itemize}
\item $g$ is an {\bf extremal K\"ahler metric} with $Scal= - (\alpha \gz+\beta)$ if and only if
$$F''(\gz) = (1+r \gz)^{d_N-1}\left(\frac{2d_N A_N r}{n} + (\alpha \gz+\beta)(1+r\gz)\right).$$
By integration, there is a unique solution $F_{ext}(\gz)$ (the {\em extremal polynomial}) to this boundary value problem, where the constants $\alpha$ and $\beta$ are such that the endpoint conditions of \eqref{eq:toricboundary} are satisfied. Further, $F_{ext}(\gz)$ corresponds to a genuine orbifold K\"ahler metric on $(S_n,\grD_{\bfm,N})$ if $F_{ext}(\gz)>0$ for $-1<\gz<1$. As in Section 5.2 of \cite{BoTo14a} and the references therein, we can see that this is the case if $A_N \geq 0$. 
\item The extremal polynomial  $F_{ext}(\gz)$ above corresponds to a {\bf CSC K\"ahler metric} with $Scal=-\beta$ if and only if ($\alpha=0$ which is equivalent to)
\begin{equation}\label{candkeqn}
\frac{2A_{N}\left( (1+r)^{d_N+1} - (1-r)^{d_N+1}\right)}{nr(d_N+1)} + \frac{\beta\left( (1+r)^{d_N+2} - (1-r)^{d_N+2}\right)}{r^2(d_N+1)(d_N+2)} + 2c=0,
\end{equation}
where
$$
\beta= \frac{-2 (d_N+1) r \left(m^\infty (1+r)^{d_N} (n+m^0 A_{N})-m^0 (1-r)^{d_N} (-n+m^\infty A_{N})\right)}{nm^0 m^\infty \left((1+r)^{d_N+1}-(1-r)^{d_N+1}\right)}
$$
and\footnote{There is an inconsequencial $r$ missing in the last term in the numerator of Equation (48) in \cite{BoTo14a}}
$$
c=  \frac{2 \left(1-r^2\right)^{d_{N}} (nm^\infty (1-r)+nm^0 (1+r) -  2m^0 m^\infty A_{N})}{nm^0 m^\infty \left((1+r)^{d_{N}+1}-(1-r)^{d_{N}+1}\right)}.
$$

[Note that in this case, $F_{ext}(\gz)>0$ for $-1<\gz<1$ is automatic.]
\item Finally, the extremal polynomial  $F_{ext}(\gz)$ above corresponds to a {\bf KE K\"ahler metric} if and only if
$(N,\omega_N,g_N)$ is KE with index $\cali_N$ (note $A_N=\cali_N$) and
\begin{equation}
\int_{-1}^1 \left((1-\gz)/m^\infty -(1+\gz)/m^0\right) { (1+r \gz)^{d_N}} d\gz = 0
\end{equation}
and
$$2r\cali_N/n = (1+r)/m^\infty + (1-r)/m^0.$$

[Again, in this case, $F_{ext}(\gz)>0$ for $-1<\gz<1$ is automatic.]
\end{itemize}

\subsection{The Orbifold First Chern Class}
We also have the analogue of Lemma 3.9 of \cite{BoTo14a} when $N$ is an orbifold, viz.

\begin{lemma}\label{c1orblem}
For the projective orbibundle 
$$(S_n,\grD_{\bfm,N})\fract{\pi^{orb}}{\longrightarrow} (N,\grD_N)$$ 
we have
$$c_1^{orb}(S_n,\Delta_{\bfm,N}) = (\pi^{orb}_{\bfm})^*c_1^{orb}(N,\grD_N) + \frac{1}{m^0}PD(D^0)+\frac{1}{m^\infty}PD(D^\infty).$$
\end{lemma}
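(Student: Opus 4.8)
The plan is to reduce the statement to the classical canonical bundle formula for a ruled variety, and then to account for the branch‑divisor corrections at the level of rational (orbifold) cohomology, where $H^2_{orb}(-,\bbq)\cong H^2(-,\bbq)$ and the orbifold pullback is simply the ordinary pullback of de Rham classes.

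First I would establish the underlying non‑orbifold identity $c_1(S_n)=\pi^*c_1(N)+PD(D^0)+PD(D^\infty)$ for the ruled variety $S_n=\bbp(\BOne\oplus L_n)$; this is the orbifold analogue of Lemma 3.9 of \cite{BoTo14a}, proved on a local uniformizing cover of $(N,\grD_N)$ over which $L_n$ is a genuine line bundle and $S_n$ is a smooth $\bbp^1$‑bundle. On such a chart one applies adjunction to the two disjoint sections $D^0,D^\infty$: using that they have normal bundles $L_n^{\pm 1}$ and that $D^0\cap D^\infty=\emptyset$, one checks that the relative canonical bundle $K_{S_n/N}$ equals $\calo(-D^0-D^\infty)$, so that $K_{S_n}=\pi^*K_N-D^0-D^\infty$; these local formulae patch to the stated identity in $H^2(S_n,\bbq)$.

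Next I would pass to the orbifold level. For a complex orbifold presented as a pair $(X,\grD)$ with $\grD=\sum_i(1-\frac{1}{m_i})D_i$ one has $c_1^{orb}(X,\grD)=c_1(X)-[\grD]$ in $H^2_{orb}(X,\bbq)\cong H^2(X,\bbq)$ (represent $c_1^{orb}$ by the Ricci form of an orbifold K\"ahler metric, which acquires cone angle $2\pi/m_i$ transverse to $D_i$; equivalently, $K^{orb}_{(X,\grD)}$ is identified with $K_X+\grD$ via the ramification formula on uniformizing charts). Applying this with $X=S_n$ and $\grD=\grD_{\bfm,N}$ as in \eqref{brdiv}, and inserting the formula from the previous paragraph, gives
$$c_1^{orb}(S_n,\grD_{\bfm,N})=\pi^*c_1(N)+\frac{1}{m^0}PD(D^0)+\frac{1}{m^\infty}PD(D^\infty)-\sum_{j=1}^k\Bigl(1-\frac{1}{m_j}\Bigr)\pi^*PD(D_j).$$
Since $PD(\pi^{-1}(D_j))=\pi^*PD(D_j)$, the terms not involving $D^0$ and $D^\infty$ collect into $\pi^*\bigl(c_1(N)-[\grD_N]\bigr)$, which equals $(\pi^{orb})^*c_1^{orb}(N,\grD_N)$ because $\grD_{\bfm,N}$ already contains $\pi^{-1}(\grD_N)$ and on rational cohomology the orbifold pullback coincides with the ordinary pullback; this is precisely the asserted formula.

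I expect the main difficulty to lie not in the cohomological identity but in the bookkeeping over the singular/orbifold base: one must justify the adjunction computation of the first step across the cyclic quotient singularities of $N$ and along the orbifold divisors $D_j$ — which genuinely forces one to argue on local uniformizing covers and patch — and one must confirm that the weighted fiber $\bbc\bbp^1[\bfv]/\bbz_m$ of $\pi^{orb}$ from Theorem \ref{orbquot} is faithfully recorded by the coefficients $1-\frac{1}{m^0}$ and $1-\frac{1}{m^\infty}$ of $D^0$ and $D^\infty$ in $\grD_{\bfm,N}$, i.e.\ that $m^0=mv^0$ and $m^\infty=mv^\infty$ are the transverse ramification indices along the two sections. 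Once these identifications are in place the computation above is immediate.
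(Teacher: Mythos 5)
Your proposal is correct and follows essentially the same route as the paper: both reduce to the identity $c_1(S_n)=\pi^*c_1(N)+PD(D^0)+PD(D^\infty)$ for the underlying ruled variety and then apply the ramification (branch-divisor) correction on local uniformizing charts to pass to $c_1^{orb}$. The only cosmetic difference is that you obtain the smooth identity by adjunction along the two disjoint sections via $K_{S_n/N}=\calo(-D^0-D^\infty)$, whereas the paper computes the first Chern class of the vertical bundle using Leray--Hirsch; these are dual formulations of the same fact.
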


\begin{proof}
For the orbifold $(N,\grD_N)$ the orbifold canonical divisor $K^{orb}_N$ satisfies
$$K^{orb}_N=\varphi^*K_N+\sum_j\bigl(1-\frac{1}{m_j}\bigr)\varphi^*D_j$$
where $\varphi$ is local uniformizing map whose notation we often omit. Taking the Poincar\'e dual and using $c_1(N)=-c_1(K_N)$ then gives
\begin{equation}\label{c1orbN}
c_1^{orb}(N,\grD_N)=\varphi^*c_1(N) +\sum_j\bigl(\frac{1}{m_j}-1\bigr)\varphi^*PD(D_j).
\end{equation}
Now for a projective bundle $\bbp(\BOne\oplus L_n)\fract{\pi}{\longrightarrow} N$ we know from Leray-Hirsch that the cohomology $H^*(\bbp(\BOne\oplus L_n),\bbz)$ is a free module over $H^*(N,\bbz)$ with basis $\{1,x\}$ where $x$ is the cohomology class in $H^2(\bbp(\BOne\oplus L_n),\bbz)$ whose restriction to a fiber $\approx \bbc\bbp^1$ freely generates the cohomology of $\bbc\bbp^1$. It is the first Chern class of the dual of the tautological bundle \cite{BoTu82}. So the vertical bundle $\mathcal{V}(N)$ has first Chern class $PD(D^0)+PD(D^\infty)$ which implies
$$c_1(\bbp(\BOne\oplus L_n))=\pi^*c_1(N)+PD(D^0)+PD(D^\infty).$$
This together with \eqref{c1orbN} gives the result.
\end{proof}

%$\calE\cale\calD\cald\calG\calg\cali\calI\calj\calJ\calk\calK\calh\calH\call\calL\calz\calZ\calx\calX\caly\calY\calc\calC$

%$S^1$ orbibundle $M\longrightarrow (N,\grD)$ is determined uniquely by its first Chern class\footnote{Viewing this orbibundle as an $S^1$ Seifert bundle, this follows, for example, from Proposition 53 of \cite{Kol04c}.} in $H^2_{orb}(N,\bbz)$.

\begin{remark}\label{Reebrem}
A choice of quasi-regular Reeb vector field $\xi_\bfv\in\gt^+_\bfw$ completely determines the quotient orbifold structure $(S_n,\grD_{\bfm,N})$. With the join parameters $\bfl=(l^0,l^\infty)$ and $\bfw=(w^0,w^\infty)$ fixed, the Reeb vector field $\xi_\bfv$ uniquely determines the integers $n,m,s$ by the expressions in Theorem \ref{orbquot}. Note that both $m$ and $s$ are divisors of $l^\infty$.

%is induced by the vector field $\tilde{\xi}_\bfm$ on $S^3$ which satisfies the relation $\tilde{\xi}_\bfm=m\tilde{\xi}_\bfv$. However, $\xi_\bfm$ does not satisfy such a relation. For different integral values of $m=\gcd(m^0,m^\infty)$ the corresponding quotients are the same projective algebraic variety, but they have different orbifold structures. Note, however, that since $l^\infty=ms$ is often a contact invariant, $m$ can take on only very special values, namely divisors of $l^\infty$. 
%Furthermore, $s$ and hence $m$ is uniquely determined by the choice of co-prime $(v^0,v^\infty)$, so a choice of a quasi-regular ray in the $\bfw$-cone is really a choice of a pair of co-prime  $(v^0,v^\infty)$, and then the value of $m$ and hence $(m^0,m^\infty)$ follows.
\end{remark}

\begin{remark}
From the join construction we get a projective orbibundle 
$$\bbp(\BOne\oplus L_n)=(S_n,\grD_{N,\bfm})\fract{\pi}{\longrightarrow} (N,\grD_N)$$ 
with generic fibers of the form $\bbc\bbp^1[\bfv]/\bbz_m$. However, now since $(N,\grD_N)$ is also an orbifold we have singular fibers along the branch divisors $D_j$ of $N$ and the order of the orbifold singularity is given by the ramification index $m_j$. So along $D_j$ the fibers of $\pi$ have the form $\bbc\bbp^1[\bfv]/(\bbz_m\times\bbz_{m_j})$, and along intersections $D_i\cap D_j$ with $i\neq j$ the fibers take the form $\bbc\bbp^1[\bfv]/(\bbz_m\times\bbz_{m_i}\times\bbz_{m_j})$, etc.
\end{remark}

%Fixing a quasi-regular Sasakian structure fixes a Reeb vector field $\xi_\bfm\in\grL_\bfw\subset \gt^+_\bfw$. As such it also fixes the transverse K\"ahler structure as well as the K\"ahler form $\gro_{n,\bfm,N}$ on the quotient orbifold $(S_n,\grD_\bfm)$. Up to a transverse homothety we may assume that $[\gro_{n,\bfm,N}]$ is a primitive orbifold class on  $(S_n,\grD_{N,\bfm})$. {\color{blue} [Note: If we believe Proposition 6.2 in \cite{BoTo14a} is correct...even up to scale, then note that the class of the transverse metric is actually $m$ times a primitive class. That is why I am saying ``up to a transverse homothety''. Our goal is to eventually iterate the join and thus we would want to reset to primitive a each step anyway.]}

\begin{remark}
In the regular case where $\Upsilon_N=1$ we have 
$$[\gro_{n,\bfm,N}]=\frac{l^0 v^\infty w^0 (\pi^{orb}_\bfv)^*[\gro_N]+s PD(D^\infty)}{\gcd(s,w^0v^\infty l^0) mv^0v^\infty}$$
as expected from Lemma 3.11 in \cite{BoTo14a} and the assumption of primitivity.
\end{remark}

\begin{example}\label{c1join}
If we assume as in \cite{BoTo18c} that $(N,\grD_N)$ is KE with $c_1^{orb}(N,\grD_N)= \cali_N[\gro_N]$ and $l^0, l^\infty$ are chosen such that
\begin{equation}\label{SEjoin}
l^{0}=\frac{\cali_N}{\gcd(w^0+w^\infty,\cali_N)},\qquad   l^\infty=\frac{w^0+w^\infty}{\gcd(w^0+w^\infty,\cali_N)},
\end{equation}
then from Lemma \ref{c1orblem} we calculate that

\begin{eqnarray*}
c_1^{orb}(S_n,\Delta_{\bfm,N}) & = & (\pi^{orb})^*c_1^{orb}(N,\grD_N) + \frac{1}{m^0}PD(D^0)+\frac{1}{m^\infty}PD(D^\infty)\\
\\
&=& \frac{v^\infty(\cali_N m v^0+n) (\pi^{orb})^* [\gro_N] + (v^0+v^\infty)PD(D^\infty)}{mv^0v^\infty}\\
\\
&=& \frac{(l^\infty \cali_N v^0v^\infty+ l^0v^\infty(w^0v^\infty-w^\infty v^0)) (\pi^{orb}_{\bfm})^* [\gro_N] +s (v^0+v^\infty)PD(D^\infty)}{smv^0v^\infty} \\
&=& \frac{((w^0+w^\infty)v^0v^\infty+ v^\infty(w^0v^\infty-w^\infty v^0))l^0 (\pi^{orb})^* [\gro_N] +s (v^0+v^\infty)PD(D^\infty)}{smv^0v^\infty} \\
&=&\frac{(v^0+v^\infty) \bigl(w^0v^\infty l^0 (\pi^{orb})^* [\gro_N]_I +s\Upsilon_N PD(D^\infty)\bigr)}{smv^0v^\infty\Upsilon_N} \\
&=& \Bigl(\frac{v^0+v^\infty}{s}\Bigr)\gcd(l^0w^0v^\infty,s\Upsilon_N)[\gro_{n,\bfm,N}] 
%&=& \Bigl(\frac{v^0+v^\infty}{s}\Bigr)\Bigl(\frac{\Upsilon_N}{\gcd(mv^0v^\infty,\Upsilon_N)}\Bigr)\gcd(v^\infty,s)[\gro_{n,\bfm,N}]
\end{eqnarray*}
which implies that the Fano index of $[\gro_{n,\bfm,N}]$ is
$$\cali_{(S_n,\grD_{\bfm,N})}= \Bigl(\frac{v^0+v^\infty}{s}\Bigr)\gcd(l^0w^0v^\infty,s\Upsilon_N).$$
Now it follows from first principles that the index $\cali_{(S_n,\grD_{\bfm,N})}$ must be an integer. But we can show this directly in our case. First note that since $s$ divides $|\bfw|=w^0+w^\infty$ when $c_1(\cald_{\bfl,\bfw})=0$ and it also divides $(w^0 v^\infty - w^\infty v^0)$, it is a factor of $v^0(w^0+w^\infty) + (w^0 v^\infty - w^\infty v^0) = w^0 (v^0+v^\infty)$. Similarly, it is a factor of $v^\infty(w^0+w^\infty) - (w^0 v^\infty - w^\infty v^0) = w^\infty (v^0+v^\infty)$. So, since $\gcd(w^0,w^\infty)=1$, it must be a factor of $(v^0+v^\infty)$. A similar analysis shows that $\gcd(l^0w^0v^\infty,s)=1$. So the index reduces to
\begin{equation}\label{Fanoindex2}
\cali_{(S_n,\grD_{\bfm,N})}= \Bigl(\frac{v^0+v^\infty}{s}\Bigr)\gcd(l^0w^0v^\infty,\Upsilon_N).
\end{equation}

As a concrete example, using \eqref{Fanoindex2}, the index of the quotient K\"ahler-Einstein orbifold coming from the quasi-regular  Sasaki-Einstein metric on $Y^{13,8}$ is determined to be $\cali_{v_2}=12$ as in agreement with the first example of Example 5.3 of \cite{BoTo18c}. 
Continuing with this example we can now consider the  quasi-regular Sasaki-Einstein metric of the join
$$Y^{13,8}\star_{4,15}S^3_{34,11}$$ and from the data given in \cite{BoTo18c}, \eqref{Fanoindex2} tells us that now the index of the quotient is
$\cali_{v_3}=28$.

There are obstructions to the regularity of Reeb vector fields:

\begin{proposition}\label{reglem}
Let $M_{\bfl,\bfw}$ be a Sasaki join manifold with $l^\infty>2$ and $c_1(\cald_{\bfl,\bfw})=0$. Then the $\bfw$ cone $\gt^+_\bfw$ does not contain a regular Reeb vector field. 
\end{proposition}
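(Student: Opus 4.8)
The plan is to argue by contradiction: suppose $\xi_\bfv \in \gt^+_\bfw$ is a regular Reeb vector field. Regularity of $\xi_\bfv$ means the quotient orbifold $(S_n, \grD_\bfm + \pi^{-1}(\grD_N))$ is actually a smooth manifold with trivial orbifold structure. By Lemma \ref{joinorderlem} the order of $\cals_\bfv$ is $m v^0 v^\infty \Upsilon_N$, so regularity forces $m = v^0 = v^\infty = \Upsilon_N = 1$. In particular $\bfv = (1,1)$, which is precisely the point where the $\bfw$-cone degenerates to the reducible ray; but $\bfv=(1,1)$ is an interior lattice point giving a genuine quasi-regular Sasakian structure, so this is the candidate we must rule out. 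With $v^0=v^\infty=1$ and $m=1$ we have $l^\infty = ms = s = \gcd(l^\infty, |w^0 - w^\infty|)$, hence $l^\infty$ divides $|w^0 - w^\infty|$, and $n = l^0(w^0 - w^\infty)/s$.

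The key step is then to examine the fiber of $\pi^{orb}\colon S_n \to N$ over a regular point of $N$ (using $\Upsilon_N = 1$, the whole base is smooth). The generic fiber is $\bbc\bbp^1[\bfv]/\bbz_m = \bbc\bbp^1[1,1]/\bbz_1 = \bbc\bbp^1$, so that is consistent; the obstruction must come instead from the global structure, namely from the branch divisors $D^0, D^\infty$ of $\bbp(\BOne \oplus L_n)$ together with the constraint $c_1^{orb}(L_n) = n[\gro_N]$ and the requirement that $[\gro_N]$ be a primitive integral class. The point is that when $c_1(\cald_{\bfl,\bfw}) = 0$ the Fano-index computation of Example \ref{c1join} applies (with $l^0, l^\infty$ given by \eqref{SEjoin}, or the relevant divisibility from $c_1(\cald_{\bfl,\bfw})=0$), and plugging $\bfv=(1,1)$, $m=1$, $s=l^\infty$ into \eqref{Fanoindex2} gives $\cali_{(S_n,\grD_{\bfm,N})} = (2/l^\infty)\gcd(l^0 w^0, \Upsilon_N) = 2/l^\infty$ (since $\Upsilon_N=1$). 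For this to be a positive integer we need $l^\infty \in \{1,2\}$, contradicting the hypothesis $l^\infty > 2$.

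More carefully, I would not want to rely on the Fano hypothesis of Example \ref{c1join} if $c_1(\cald_{\bfl,\bfw})=0$ does not by itself force \eqref{SEjoin}; instead I would run the index/primitivity bookkeeping directly. From Lemma \ref{Kahclass} with $\bfv=(1,1)$, $m=1$, $s=l^\infty$, $\Upsilon_N=1$, the induced primitive orbifold class on $S_n$ is
$$[\gro_{n,\bfm,N}] = \frac{l^0 w^0 (\pi^{orb})^*[\gro_N]_I + l^\infty PD(D^\infty)}{\gcd(l^\infty, l^0 w^0)},$$
and since $\gcd(l^0, l^\infty) = 1$ we get $\gcd(l^\infty, l^0 w^0) = \gcd(l^\infty, w^0)$. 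Combined with $l^\infty \mid (w^0 - w^\infty)$ and $\gcd(w^0,w^\infty)=1$, one checks $\gcd(l^\infty, w^0) = 1$, so the class is already integral and primitive. Then the Chern class computation of Lemma \ref{c1orblem} for a smooth $\bbp(\BOne\oplus L_n) \to N$ gives $c_1^{orb}(S_n, \grD_{\bfm,N}) = (\pi^{orb})^* c_1(N) + PD(D^0) + PD(D^\infty)$, and the condition $c_1(\cald_{\bfl,\bfw})=0$ transported to $S_n$ expresses $(\pi^{orb})^*c_1(N)$ as a specific multiple of $(\pi^{orb})^*[\gro_N]_I$; comparing with the primitive generators $\{(\pi^{orb})^*[\gro_N]_I, PD(D^\infty)\}$ of $\ker p_\bfv^*$ forces the coefficient $2 = l^\infty \cdot (\text{integer})$, i.e.\ $l^\infty \mid 2$, the desired contradiction with $l^\infty > 2$.

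\textbf{Main obstacle.} The routine part is the gcd bookkeeping; the real subtlety is making the step ``$c_1(\cald_{\bfl,\bfw})=0$ on $M_{\bfl,\bfw}$ implies the coefficient of $PD(D^\infty)$ in $c_1^{orb}(S_n,\grD_{\bfm,N})$ equals $2$ (mod $l^\infty$)'' fully rigorous — one must correctly transport the basic class $c_1(\cald_{\bfl,\bfw})$ through $p_\bfv$ and identify it using the relations \eqref{solnsectiondiff} between $PD(D^0), PD(D^\infty)$ and the generator $\grg$. This is where the hypothesis $c_1(\cald_{\bfl,\bfw})=0$ is genuinely used (rather than as a red herring), and getting the arithmetic of the two section classes against the integral generator right is the crux. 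I expect the cleanest route is to phrase everything in terms of the single generator $\grg \in H^2(M_{\bfl,\bfw},\bbz)$ from diagram \eqref{t3comdia}, where $c_1(\cald_{\bfl,\bfw})$ is a known integer multiple of $\grg$, and then read off the needed divisibility directly.
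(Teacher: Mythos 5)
Your first route does reach a valid contradiction, but it is worth seeing that it is the paper's own argument in disguise. The paper argues directly: regularity forces $\bfv=(1,1)$ and $m=1$, hence $l^\infty=s=\gcd(s,w^0-w^\infty)$ divides $w^0-w^\infty$; the hypothesis $c_1(\cald_{\bfl,\bfw})=0$ forces the relative Fano index equations \eqref{SEjoin}, hence $s$ divides $|\bfw|=w^0+w^\infty$; therefore $s$ divides $2\gcd(w^0,w^\infty)=2$, contradicting $l^\infty>2$. Your appeal to the integrality of the Fano index \eqref{Fanoindex2} is the same arithmetic routed through Example \ref{c1join}: the integrality of the factor $(v^0+v^\infty)/s$ is established there precisely by showing $s\mid w^0(v^0+v^\infty)$ and $s\mid w^\infty(v^0+v^\infty)$ from $s\mid|\bfw|$ and $s\mid(w^0v^\infty-w^\infty v^0)$, which at $\bfv=(1,1)$ is literally the statement $s\mid 2$. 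Your hedge about whether $c_1(\cald_{\bfl,\bfw})=0$ forces \eqref{SEjoin} is resolved in the paper's favor (the Gorenstein condition is equivalent to the conditions of Theorem \ref{admjoinse}; see the remark after Theorem \ref{almostreg}), and this implication is exactly where the paper's proof uses the hypothesis as well. Your gcd bookkeeping ($\gcd(l^\infty,w^0)=1$ from $l^\infty\mid(w^0-w^\infty)$ and $\gcd(w^0,w^\infty)=1$) is correct, though not actually needed once one has $s\mid 2$.

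Two smaller points. First, your third paragraph (``transporting'' $c_1(\cald_{\bfl,\bfw})=0$ through $p_\bfv$ and extracting $l^\infty\mid 2$ from the coefficient of $PD(D^\infty)$) is not a proof as written: the step ``forces the coefficient $2=l^\infty\cdot(\text{integer})$'' is exactly the computation you defer, and you acknowledge as much; since your first route already closes the argument, this paragraph should either be carried out in full or dropped. Second, the aside that $\bfv=(1,1)$ is ``where the $\bfw$-cone degenerates to the reducible ray'' is incorrect --- the reducible structure sits at $\bfv=\bfw$, while $(1,1)$ is the almost regular ray --- but this slip plays no role in the argument.
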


\begin{proof}
Assume that $\xi_\bfv\in\gt^+_\bfw$ is a regular Reeb vector field.  Such a $\xi_\bfv$ is regular if and only if $\bfv=(1,1)$ and $m=1$. But this implies that $2<l^\infty=s=\gcd(s,w^0-w^\infty)$ so $s$ divides $w^0-w^\infty$. But also since $c_1(\cald_{\bfl,\bfw})=0$ equations \eqref{SEjoin} hold.  So $s$ divides $|\bfw|=w^0+w^\infty$. But since $s>2$ either $s$ or $\frac{s}{2}$ must divide both $w^0$ and $w^\infty$, and this gives a contradiction since $\gcd(w^0,w^\infty)=1$.
\end{proof}

%When $\bfw=(1,1)$ and $c_1(\cald_{\bfl,\bfw})=0$ we have 

\end{example}

\subsection{A Categorical Approach}
For a fixed quasiregular Sasaki manifold $(M,\cals)$ we consider the set of CR orbifolds of Sasaki type
\begin{equation}\label{objs}
\{\{(M_{\bfl,\bfw},\cald_{\bfl,\bfw},J_\bfw)\}_{(\bfl,\bfw)\in (\bbz^+)^2\times (\bbz^+)^2}\}
\end{equation}
given by the join construction. Each element $(M_{\bfl,\bfw},\cald_{\bfl,\bfw},J_\bfw)$ consists of a real 2-parameter family of Sasakian structures parameterized by the $\bfw$ Sasaki cone $\gt^+_\bfw$ which is completely characterized by its dense subset of quasiregular Reeb vector fields $\xi_\bfm$. The set of all such Sasakian structures 
$$\{\cals_{\bfl,\bfw,\bfm}\}_{(\bfl,\bfw,\bfm)\in(\bbz^+)^2\times(\bbz^+)^2\times (\bbz^+)^2}$$
are the object set $\calg_{\bfl,\bfw,\bfm}$ of a groupoid whose morphisms are equivalences, that is, orbifold diffeomorphisms $f:M_{\bfl,\bfw}\longrightarrow M_{\bfl',\bfw'}$ 
such that 
\begin{equation}\label{Deqn}
f_*\cald_{\bfl,\bfw}=\cald_{\bfl',\bfw'},\qquad f_*\circ J_\bfw=J_{\bfw'}\circ f_*
\end{equation}
and
$f^*\cals_{\bfl'\bfw',\bfm'}=\cals_{\bfl,\bfw,\bfm}$. Note that the isotropy subgroup of $\calg_1$ at $\cals_{\bfl,\bfw,\bfm}$ is just the Sasaki automorphism group $\gA\gu\gt(\cals_{\bfl,\bfw,\bfm})$.
We work with the full subgroupoid $\calg=(\calg_0,\calg_1)$ whose object set $\calg_0$ satisfies the condition that the components of $\bfl$ and $\bfw$ are relatively prime.

Similarly, consider the set of projective algebraic ruled K\"ahler orbifolds of the form
\begin{equation}\label{kahorbieqn}
\{\calz_{n,\bfm}=(S_n,\grD_\bfm+\pi^{-1}(\grD_N),\gro_{n,\bfm,N})\}_{(n,\bfm)\in \bbz\times(\bbz^+)^2}
\end{equation}
that satisfy $[\gro_{n,\bfm,N}]\in H^2_{orb}(\calz_{n,\bfm},\bbz)$ and $\gcd(m^0,m^\infty,|n|)=1$ where $N$ is fixed. 
These form the object set $\calq_0$ of a groupoid $\calq=\{\calq_{n,\bfm}\}$ whose morphisms are orbifold biholomorphisms intertwining the K\"ahler structures. 

\begin{proposition}\label{catprop}
There is a full functor $F:\calg\longrightarrow \calq$. 
\end{proposition}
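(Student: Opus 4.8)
The plan is to define $F$ on objects by sending a triple $(M_{\bfl,\bfw},\cald_{\bfl,\bfw},J_\bfw)$ together with a choice of quasiregular Reeb field $\xi_\bfv\in\gt^+_\bfw$ to the quotient orbifold $\calz_{n,\bfm}=(S_n,\grD_\bfm+\pi^{-1}(\grD_N),\gro_{n,\bfm,N})$, where $n,m,s$ are the integers determined by $\bfv$ via Theorem \ref{orbquot}. First I would check that this lands in $\calq_0$: Theorem \ref{orbquot} identifies the quotient with $(S_n,\grD_\bfm+\pi^{-1}(\grD_N))$; Lemma \ref{Kahclass} (or Corollary \ref{transKahformlem}) shows the induced primitive orbifold K\"ahler form $\gro_{n,\bfm,N}$ has integral class, hence $[\gro_{n,\bfm,N}]\in H^2_{orb}(\calz_{n,\bfm},\bbz)$; and the Remark after Theorem \ref{orbquot} gives $\gcd(m,n)=1$ while $\gcd(m^0,m^\infty)=m\gcd(v^0,v^\infty)\cdot(\text{stuff})$ — more precisely $\gcd(m^0,m^\infty,|n|)=1$ follows from $\gcd(v^0,v^\infty)=1$ together with $\gcd(m,n)=1$. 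Since the Sasakian structure $\cals_{\bfl,\bfw,\bfm}$ restricted to a transverse slice \emph{is} the pullback of the K\"ahler structure on $\calz_{n,\bfm}$ (this is exactly the content of Corollary \ref{transKahformlem}, $\gro^T=d\eta_\bfv = m\gcd(s\Upsilon_N,w^0v^\infty l^0)\,p_\bfv^*\gro_{n,\bfm,N}$, up to the explicit positive constant), the functor is well-defined on objects.

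Next I would define $F$ on morphisms. A morphism in $\calg_1$ from $\cals_{\bfl,\bfw,\bfm}$ to $\cals_{\bfl',\bfw',\bfm'}$ is an orbifold diffeomorphism $f\colon M_{\bfl,\bfw}\to M_{\bfl',\bfw'}$ satisfying \eqref{Deqn} and $f^*\cals_{\bfl',\bfw',\bfm'}=\cals_{\bfl,\bfw,\bfm}$; in particular $f$ intertwines the Reeb fields $\xi_\bfv$ and $\xi_{\bfv'}$. Hence $f$ descends to a diffeomorphism $\bar f\colon S_n\to S_{n'}$ of the quotient orbifolds, and because $f$ intertwines the transverse complex structures $J_\bfw,J_{\bfw'}$ and transverse K\"ahler forms $d\eta_\bfv,d\eta_{\bfv'}$, the induced map $\bar f$ is an orbifold biholomorphism intertwining $\gro_{n,\bfm,N}$ and $\gro_{n',\bfm',N}$ — i.e. a morphism in $\calq_1$. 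Functoriality ($F(g\circ f)=F(g)\circ F(f)$, $F(\mathrm{id})=\mathrm{id}$) is immediate since passing to the quotient by the Reeb flow is itself functorial. One subtlety to address: an object of $\calg$ is a whole $\bfw$-cone's worth of Sasakian structures, not a single quasiregular one, so I would either (i) phrase $F$ as taking each quasiregular element $\cals_{\bfl,\bfw,\bfm}$ to its quotient — using that quasiregular Reeb fields are dense and the family is determined by them, as noted in the paragraph preceding \eqref{objs} — or (ii) note that a morphism of $\bfw$-cones carries quasiregular elements to quasiregular elements compatibly, so the assignment is consistent across the cone.

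The real work is showing $F$ is \emph{full}: every morphism $\phi\colon\calz_{n,\bfm}\to\calz_{n',\bfm'}$ in $\calq_1$ — an orbifold biholomorphism intertwining the K\"ahler forms — arises as $F(f)$ for some $f\in\calg_1$. Here I would argue as follows. The orbifold $S_n$ carries the $S^1$ Seifert data recorded by $\calz_{n,\bfm}$, and $M_{\bfl,\bfw}$ (with its chosen quasiregular $\xi_\bfv$) is, by Theorem \ref{orbquot} and the Boyer–Galicki orbifold-bundle correspondence (Theorem 4.3.15 of \cite{BG05}, cited in the excerpt), the total space of the $S^1$ orbibundle over $\calz_{n,\bfm}$ with orbifold first Chern class $[\gro_{n,\bfm,N}]$. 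Since $\phi$ intertwines the K\"ahler classes (hence the orbifold first Chern classes of the associated bundles up to the relevant normalization), it lifts to an $S^1$-equivariant orbifold diffeomorphism $\tilde\phi\colon M_{\bfl,\bfw}\to M_{\bfl',\bfw'}$ — the lift is unique up to the $S^1$ action and exists because isomorphic orbibundles have fiberwise-equivariantly-isomorphic total spaces. This $\tilde\phi$ automatically satisfies \eqref{Deqn} (it covers a biholomorphism, so intertwines the horizontal lifts of the complex structures and contact distributions) and pulls back $\eta_{\bfv'}$ to $\eta_\bfv$ up to the transverse scaling/isotopy already built into the definition (using Corollary \ref{transKahformlem} to match $d\eta$'s), hence $\tilde\phi^*\cals_{\bfl',\bfw',\bfm'}=\cals_{\bfl,\bfw,\bfm}$ after absorbing the scaling. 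Thus $\tilde\phi\in\calg_1$ and $F(\tilde\phi)=\phi$.

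The main obstacle is this last lifting step: one must be careful that an orbibundle \emph{isomorphism} at the level of the Chern class genuinely produces a diffeomorphism of total spaces compatible with the Sasakian (not merely contact or smooth) structure, and that the normalization constants relating $[\gro_{n,\bfm,N}]$ to $d\eta_\bfv$ in Corollary \ref{transKahformlem} match on both sides — i.e. that $\phi$ being a \emph{K\"ahler} isometry-class-preserving biholomorphism (intertwining the specific forms $\gro_{n,\bfm,N}$ and $\gro_{n',\bfm',N}$, not just their classes) forces the lifted $\tilde\phi$ to preserve the contact forms up to the allowed isotopy. I expect this is handled exactly as in the Boyer–Galicki orbifold Boothby–Wang correspondence, so the argument is more a matter of careful bookkeeping with the constants than a genuinely new difficulty; the functor is manifestly not faithful (the $S^1$ worth of lifts, i.e. the Reeb flow, lies in every isotropy group and acts trivially on the quotient) which is consistent with the statement claiming only fullness.
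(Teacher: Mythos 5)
Your proposal follows the same architecture as the paper's proof: $F$ is defined on objects via the quotient of Theorem \ref{orbquot} together with the K\"ahler data of Theorem \ref{transKahthm}, morphisms of $\calg$ descend to morphisms of $\calq$ because they satisfy \eqref{Deqn}, and fullness comes from lifting a K\"ahler-form-intertwining orbifold biholomorphism of the quotients back to an equivalence of the joins. The one place you diverge is the lifting step itself: the paper disposes of it in a single line by citing Proposition 4.22 of \cite{BHLT16}, whereas you reconstruct it from the orbifold Boothby--Wang correspondence (Theorem 4.3.15 of \cite{BG05}), arguing that preservation of the K\"ahler form forces preservation of the orbifold first Chern class of the $S^1$ orbibundle and hence yields an equivariant lift, with the contact forms matching up to the allowed gauge/isotopy. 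That is precisely the content of the cited proposition, so your version is a self-contained expansion rather than a genuinely different argument; the two subtleties you flag explicitly --- that the lift must respect the Sasakian (not merely smooth or contact) structure, and that the normalization constants of Corollary \ref{transKahformlem} must agree on both sides --- are exactly the bookkeeping the citation is meant to absorb. Your closing remark that $F$ is full but not faithful (the Reeb circle acts trivially on the quotient) is correct and consistent with the statement claiming only fullness.
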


\begin{proof}
We construct such a functor $F:\calg\longrightarrow \calq$. Theorems \ref{orbquot} and \ref{transKahthm} shows that each object $\cals_{\bfl,\bfw,\bfm}$ of $\calg$ uniquely describes a projective algebraic ruled K\"ahler orbifold of the form $(S_n,\grD_\bfm+\pi^{-1}(\grD_N),\gro_{n,\bfm,N})$ which satisfy the relations of Theorem \ref{orbquot}. Moreover, morphisms of $\calg$ satisfy Equations \eqref{Deqn}, so they  induce morphisms of $\calq$. Since morphisms of $\calq$ are orbifold biholomorphisms intertwining the K\"ahler structures, they also induce morphisms of $\calg$. Then Proposition 4.22 of \cite{BHLT16} shows that this is invertible on the objects in the image of $F$. That is, $F$ induces a surjective functor
$${\rm Hom}_\calg(A,B)\longrightarrow {\rm Hom}_\calq(F(A),F(B))$$
so it is full. 
\end{proof}

\begin{remark}\label{Nequiv}
We could also consider groupoids which include equivalences at the level of the orbibundle $M\rightarrow N$, but we do not do so here. 
\end{remark}

\begin{remark}
Note that different orbifold structures on the same $S_n$ do not necessarily correspond to the same join $M_{\bfl,\bfw}$. They can correspond to spaces that are not even homotopy equivalent. 
\end{remark}

\begin{example} We give an example of the equivalence of non-trivial equivalences. The involution $(z_1,z_2)\mapsto (z_2,z_1)$ on $S^3$ induces a diffeomorphism $\gri^\perp_\calg:M_{\bfl,\bfw}\longrightarrow M_{\bfl,\bfw^\perp}$ sending $\cals_{\bfl,\bfw,\bfm}$ to $\cals_{\bfl,\bfw^\perp,\bfm^\perp}$ where $\bfw^\perp=(w^\infty,w^0)$ and $\bfm^\perp=(m^\infty,m^0)$. This gives a
nontrivial  involution $\gri^\perp_\calg$ in $\calg_1$.   
Similarly, we have an involution $\gri^\perp_\calq\in\calq_1$ given by the fiber inversion map which interchanges the branch divisors $(D^0,m^0)$ and $(D^\infty,m^\infty)$ and sends $(S_n,\grD_\bfm+\pi^{-1}(\grD_N),\gro_{n,\bfm,N})$ to $(S_{-n},\grD_{\bfm^\perp}+\pi^{-1}(\grD_N),\gro_{-n,\bfm^\perp,N})$. Using \eqref{Deqn} and the relations of Theorem \ref{orbquot} we see that $F(\gri^\perp_\calg)=\gri^\perp_\calq$.
\end{example}

%The involution $\gri^\perp$ on $\calg$ defined by sending $\bfw\mapsto \bfw^\perp=(w^\infty,w^0)$ and $\bfv\mapsto \bfv^\perp=(v^\infty,v^0)$ 

%Thus, without loss of generality when $\bfw\neq (1,1)$  we shall often assume that $w^0>w^\infty$.
%More complicated equivalences can involve non-trivial equivalences between $(N,\grD)$ and $(N',\grD')$. 

%In this case since $\gt^+_\bfw\mapsto \gt^+_{\bfw^\perp}$, this induces the map $\bfv\mapsto \bfv^\perp$ which also implies $n\mapsto -n$ in Theorem \ref{orbquot}. 

To each element $\cals_{\bfl,\bfw,\bfm}\in \calg_0$ we can associate the first Chern class $c_1(\cald_{\bfl,\bfw})$ of the contact bundle which is an invariant under morphisms $\calg_1$. Similarly, to each element $\calz_{n,\bfm}\in \calq_0$ we associate $c_1^{orb}(S_n,\Delta_{\bfm,N})$ which also is invariant under the morphisms $\calq_1$. Furthermore, the functor $F$ induces a map 
\begin{equation}\label{c1cateqn}
F^*c_1^{orb}(S_n,\Delta_{\bfm,N})= c_1(\cald_{\bfl,\bfw}).
\end{equation}

\begin{example}\label{Ypqex}The $Y^{p,q}$s and Hirzebruch orbifolds.
Here we take $M=S^3$ with its standard SE metric. As shown in Example 6.8 of \cite{BoTo14a} this corresponds to the join $M\star_\bfl S^3_\bfw$ with 
\begin{equation}\label{pqw}
\bfl=(\gcd(p+q,p-q),p),\quad \bfw=\frac{1}{\gcd(p+q,p-q)}\bigl(p+q,p-q\bigr).
\end{equation}
Here we take $p>0$ and $-p<q<p$ where $q=0$ corresponds to $\bfw=(1,1)$. The condition $\gcd(l^0,l^\infty)=1$ corresponds to $\gcd(p,|q|)=1$ when $q\neq 0$ and $p=1$ if $q=0$. Choose a quasiregular Reeb vector field $\xi_\bfv\in\gt^+_\bfw$. Then Theorem \ref{orbquot} gives $F(Y^{p,q},\xi_\bfv)= (S_n,\grD_\bfm)$  where $S_n$ is the ruled surface over $\bbc\bbp^1$ with orbifold fibers $\bbc\bbp^1[\bfw]/\bbz_m$ satisfying $\bfm=m\bfv$ and
$$n=l^0\frac{p(m^\infty-m^0)+q(m^0+m^\infty)}{p},\qquad \gcd(m^0,m^\infty,n)=1.$$
Note that $n$ is an integer since $p=ms$ and $s$ divides $(p+q)v^\infty-(p-q)v^0$. By Proposition \ref{catprop} this gives a left invertible functor from the groupoid $\calg$ whose objects are $(Y^{p,q},\xi_\bfm)$ to the groupoid $\calq$ whose objects are Hirzebruch orbifolds $(S_n,\grD_\bfm)$ with K\"ahler form $c_1^{orb}(S_n,\grD_\bfm)$. The equivalence $\gri^\perp_\calg$ on $\calg$ sending the triple $(p,q,\bfm)$ to $(p,-q,\bfm^\perp)$ maps to the equivalence $\gri^\perp_\calq$ on $\calq$ sending $(n,\bfm)$ to $(-n,\bfm^\perp)$.
\end{example}

\section{Extremal, CSC, and KE admissible metrics on the $S^3_\bfw$-join}
An important property of the $S^3_\bfw$ join construction is that one can apply the admissible constructions from Section \ref{kahlergeometry} to give explicit constructions of existence theorems for extremal and constant scalar curvature Sasaki metrics. This was presented in a recent survey \cite{BoTo14P} as well as our original papers \cite{BoTo13,BoTo14a} when $M$ is regular; however, as discussed above in Theorem \ref{orbquot} the quasi-regular case goes through without much change. 

In particular, the lifted admissible quasi-regular structure in Theorem \ref{transKahthm} 
\begin{itemize}
\item has a lifted extremal polynomial $\tilde{F}_{ext}(\tilde{\gz})= mv^0v^\infty p_\bfv^*(F_{ext}(\gz))$ whose positivity (or not) for $-1<\tilde{\gz}<1$ will determine whether the corresponding expression \eqref{gT2} gives a genuine quasi-regular admissible extremal Sasaki metric within the isotopy class.
\item Setting $b=v^\infty/v^0$, \eqref{gT2} with $\tilde{\Theta}(\tilde{\gz})=\frac{\tilde{F}_{ext}(\tilde{\gz})}{ (1+r\tilde{\gz})^{d_N}}$ gives a genuine  CSC Sasaki metric if and only if $b$ is a root of $f$ where 

\begin{equation}\label{functionf}
\begin{array}{ccl}

f(b) & = &  (w^0)^{2(d_N+1)} b^{2 d_N+3}( A_N l^\infty  + l^0  (d_N + 1)  w^\infty-b (d_N + 1 ) l^0  w^0 )\\

\\

& - & (w^0)^{d_N+2}  (w^\infty)^{d_N} b^{d_N + 3}  (d_N+1)  (A_N ( d_N+1) l^\infty  - l^0  (( d_N+1) w^0 + ( d_N+2) w^\infty))\\

\\

& + & (w^0)^{d_N+1}  (w^\infty)^{d_N+1}  b^{d_N + 2}  (2 A_N d_N (d_N+2) l^\infty  - (d_N+1)(2d_N+3) l^0  (w^0 + w^\infty))\\

\\

& - &  (w^0)^{d_N}  (w^\infty)^{d_N+2}  b^{d_N + 1}(d_N+1) (A_N ( d_N+1) l^\infty  - l^0  ((d_N+2) w^0 + ( d_N+1) w^\infty))\\

\\

& + & (w^\infty)^{2 (d_N + 1)}( b (A_N l^\infty  + l^0  (d_N + 1) w^0)-( d_N + 1 ) l^0  w^\infty).

\end{array}
\end{equation}
\item Assume further that $M$ is an $S^1$ orbibundle over a compact positive K\"ahler-Einstein orbifold $N$ with K\"ahler class $[\gro_N]\in H^2_{orb}(N,\bbz)$ and that the relatively prime positive integers $(l^0,l^\infty)$ are the relative Fano indices given explicitly by 
$$l^{0}=\frac{\cali_N}{\gcd(w^0+w^\infty,\cali_N)},\qquad   l^\infty=\frac{w^0+w^\infty}{\gcd(w^0+w^\infty,\cali_N)},$$ where $\cali_N$ denotes the orbifold Fano index of $N$. Then \eqref{gT2} with $\tilde{\Theta}(\tilde{\gz})=\frac{\tilde{F}_{ext}(\tilde{\gz})}{ (1+r\tilde{\gz})^{d_N}}$ is an
$\eta$-Einstein Sasaki metric if and only if 
\begin{equation}\label{KEintegral}
\int_{-1}^1 \left((1-b)-(1+b)\gz \right)((b+t)+ (b-t)\gz)^{d_N}d\gz = 0,
\end{equation}
where $t=w^\infty/w^0$ is assumed to satisfy $0<t<1$ and (as above) $b=v^\infty/v^0$. Naturally, in this case, $b$ is also a root of $f$ in \eqref{functionf}.
\end{itemize}

\begin{remark}\label{quasiremark} 
The analysis on pages 1053-1054 of \cite{BoTo14a} shows that in the orbifold category there are infinitely many quasi-regular Sasaki-$\eta$-Einstein manifolds of the form $M\star_\bfl S^3_\bfw$. 
The condition that the join $M\star_\bfl S^3_\bfw$ be smooth is given by \eqref{adcond}, and the condition for a Sasaki-Einstein structure is that the components $l^0,l^\infty$ of $\bfl$ are the relative Fano indices given in Theorem \ref{admjoinse}. So such an $M_{\bfl,\bfw}$ with quasiregular Sasakian structure $\cals$ is smooth if and only if 
\begin{equation}\label{orderjoin}
\gcd(\Upsilon_\cals,l^0w^0w^\infty)=1
\end{equation}
where $\Upsilon_\cals=\Upsilon_N$ denotes the order of the Sasaki manifold $(M,\cals)$. 
\end{remark}

In Section \ref{SetaEinsteinpol} we will explore some remarkable polynomials arising from \eqref{KEintegral}.

\subsection{The Apostolov-Calderbank $CR$-twist and irregular solutions}
In a recent paper of V. Apostolov and D. M. J. Calderbank \cite{ApCa18} the notion of a $CR$-twist was introduced. See Definition 4 in \cite{ApCa18}. This casts an illuminating light on the $\bfw$-cone for the join $M\star_\bfl S^3_\bfw$. Indeed, following Section 4.5 of \cite{ApCa18}\footnote{We would also like to thank Vestislav Apostolov for a very helpful e-mail conversation.} and bearing in mind equation \eqref{dvw} we see in broad strokes the following:

For a fixed $CR$-structure, the orbifold-K\"ahler quotient of each quasi-regular ray given by a pair of co-prime $(v^0,v^\infty)$ is actually a $CR$-twist by a positive Killing potential $f$ of the product K\"ahler quotient of the join with respect to the original ray (given by $(w^0,w^\infty)$). Up to homothety, $f = \tilde{\gz} + 1/r$, where we now view $\tilde{\gz}$ as the lift of the moment map of $(\bbc\bbp^1[\bfw], \omega_\bfw)$ to the product. As we know from Section \ref{TKS},
$$1/r = \frac{w^0v^\infty + w^\infty v^0}{w^0v^\infty - w^\infty v^0} = \frac{w^0b + w^\infty }{w^0b - w^\infty }, $$
where $b=v^\infty/v^0$ as above. Allowing $b$ to be irrational, corresponds to letting $1/r$ be irrational and thus the - locally defined - K\"ahler quotients of irregular rays corresponding to $b\in \bbr^+ \setminus \bbq^+$ are also captured as $CR$-twists by $f = \tilde{\gz} + 1/r$.

Following \cite{ApCa18}, the construction of the quasi-regular admissible extremal Sasaki metrics from above may now be seen as arising as follows. By Theorem 1 in \cite{ApCa18} and the fact that all the Reeb vector fields in the $\bfw$-cone commute with $\xi_\bfw$, the Reeb vector field $\xi_\bfv$ determined by $f=\tilde{\gz} + 1/r$ is extremal if and only if $\xi_\bfw$ is so-called $(\xi_\bfv,d_N+3)$-extremal (which in turn means that the quotient product metric is $(f,d_N+3)$-extremal). The latter condition does not hold if we use the canonical Sasaki Extremal structure on $S^3_\bfw$, but one can change the profile function from the one of the original extremal K\"ahler structure $(\omega_\bfw,g_\bfw)$ to one 
($\tilde{A}$ from (17) in \cite{ApCa18})
that does satisfy that the resulting product metric on $N \times \bbc\bbp^1[\bfw]$ is $(f,d_N+3)$-extremal. The only ``catch'', is that this new profile function may not be positive everywhere. Ignoring this worry for a moment, we now have changed the $CR$-structure of the original join but we remain in the same isotopy class (with the usual ``symplectic to complex'' viewpoint shift) and the resulting Sasaki structure at $\xi_\bfv$ looks like \eqref{gT2} with  $\tilde{\Theta}(\tilde{\gz})=\frac{\tilde{F}_{ext}(\tilde{\gz})}{ (1+r\tilde{\gz})^{d_N}}$. Now the positivity condition of the aforementioned profile function (the $\tilde{A}$) is equivalent to the positivity condition of $\tilde{F}_{ext}(\tilde{\gz})$.

One great advantage of keeping the change of the $CR$-structure at the $\xi_\bfw$-ray site and then ``do a twist" to $\xi_\bfv$, is that it becomes very clear why we may extend our constructions of extremal admissible Sasaki metrics to $b\in \bbr^+ \setminus \bbq^+$ (as we did in \cite{BoTo14a}). It simply corresponds to allowing the number $\tilde{b}_0:= 1/r$ to be in $\bbr^+ \setminus \bbq^+$ and constructing $(\tilde{\gz} + \tilde{b}_0,d_N+3)$-extremal quasi-regular K\"ahler metrics on the
product $N \times \bbc\bbp^1[\bfw]$. The profile function $\tilde{A}$ from (17) of \cite{ApCa18}, clearly varies smoothly with $\tilde{b}_0$ and, in turn, so the resulting admissible extremal metric at $\xi_\bfv$, with corresponding  $\tilde{\Theta}(\tilde{\gz})=\frac{\tilde{F}_{ext}(\tilde{\gz})}{ (1+r\tilde{\gz})^{d_N}}$, will vary smoothly with $(v^0,v^\infty)$. 
In particular, when $b=v^\infty/v^0$ and $b$ is a root of $f$ in \eqref{functionf} then for 
$b\in \bbq^+$ we have a quasi-regular Sasaki CSC metric, whereas when $b\in \bbr^+ \setminus \bbq^+$, we have an irregular Sasaki CSC metric. Likewise, if $b$ is a root of $f$ in \eqref{KEintegral}, and the other conditions in the bullet point containing \eqref{KEintegral} are satisfied, then for $b\in \bbq^+$ we have a quasi-regular $\eta$-Einstein Sasaki metric, whereas when $b\in \bbr^+ \setminus \bbq^+$, we have an irregular $\eta$-Einstein Sasaki metric. 

\subsection{Constant Scalar Curvature}
Summarizing our results we see that by following the same arguments as in Section 6.2 of \cite{BoTo14a} we have orbifold versions of Theorems 1.1 through 1.4. of \cite{BoTo14a}:

\begin{theorem}\label{admjoincsc}
Let $M_{\bfl,\bfw}=M\star_\bfl S^3_\bfw$ be the $S^3_\bfw$-join with a quasi-regular Sasaki manifold (orbifold) $M$ which is an $S^1$ orbibundle over a compact K\"ahler orbifold $N$ with constant scalar curvature $s_N$. Then for each vector $\bfw=(w^0,w^\infty)\in \bbz^+\times\bbz^+$ with relatively prime components satisfying $w^0>w^\infty$ there exists a Reeb vector field $\xi_\bfv$ in the 2-dimensional $\bfw$-Sasaki cone on $M_{\bfl,\bfw}$ such that the corresponding ray of Sasakian structures $\cals_a=(a^{-1}\xi_\bfv,a\eta_\bfv,\Phi,g_a)$ has constant scalar curvature. Moreover, if $s_N\geq 0$, then the $\bfw$-Sasaki cone $\gt^+_\bfw$ is exhausted by extremal Sasaki metrics. If $s_N>0$ and $l^\infty$ is sufficiently large then the $\bfw$-cone has at least 3 cscS rays.

\end{theorem}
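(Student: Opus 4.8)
The plan is to reduce each assertion to the admissible computations already assembled in Section~\ref{kahlergeometry}, via Theorem~\ref{transKahthm}, and then to a sign analysis of the polynomial $f$ of \eqref{functionf}. Modulo the transverse scaling \eqref{transscale} — which only multiplies scalar curvature by a positive constant and hence preserves both the CSC and the extremal conditions — I would parametrise the rays of $\gt^+_\bfw$ by $b=v^\infty/v^0\in(0,\infty)$. For quasi-regular $\xi_\bfv$, Theorem~\ref{orbquot} identifies the transverse K\"ahler geometry with the admissible K\"ahler geometry of $(S_n,\grD_{\bfm,N})$, the hypothesis on $N$ making the base metric admissible with $A_N$ a fixed positive multiple of $s_N$ (once $g_{base}=2\pi g_N$). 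By Theorem~\ref{transKahthm} this family extends smoothly over all of $\gt^+_\bfw$, including the irregular rays, so for each claim it is enough to produce the appropriate value(s) of $b$; a transverse CSC (resp.\ extremal) K\"ahler metric yields a CSC (resp.\ extremal) Sasaki metric along the whole ray $\cals_a$.

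\textbf{Existence of a CSC ray.} By the bullet following Theorem~\ref{transKahthm}, \eqref{gT2} at $\xi_\bfv$ is CSC precisely when $b$ is a root of $f$, and in that case $\tilde{F}_{ext}>0$ on $(-1,1)$ is automatic. A direct computation gives $f(0)=-(d_N+1)l^0(w^\infty)^{2d_N+3}<0$, top-degree coefficient $-(d_N+1)l^0(w^0)^{2d_N+3}$ (so $f\to-\infty$ as $b\to\infty$), and $f(w^\infty/w^0)=0$; this last root is parasitic, since $b=w^\infty/w^0$ corresponds to $r=0$ and to the reducible, non-admissible, structure at $\bfv=\bfw$. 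As in Section 6.2 of \cite{BoTo14a}, $f$ has a simple zero at $b=w^\infty/w^0$, so it changes sign there and hence is positive on one of the two adjacent subintervals; being negative at the far endpoint ($0$ or $\infty$) it must cross zero again, and that crossing is the desired $b_*\in(0,\infty)\setminus\{w^\infty/w^0\}$, giving a genuine CSC ray — quasi-regular if $b_*\in\bbq^+$, irregular otherwise.

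\textbf{Extremal exhaustion when $s_N\ge 0$.} Then $A_N\ge 0$, and Section~\ref{kahlergeometry} records that in this case the extremal polynomial $F_{ext}$ is positive on $(-1,1)$ for \emph{every} admissible class, i.e.\ for every $\xi_\bfv$; lifting via $\tilde{F}_{ext}=mv^0v^\infty\,p_\bfv^*F_{ext}$ and $\tTheta=\tilde{F}_{ext}/(1+r\tilde{\gz})^{d_N}$ makes \eqref{gT2} a genuine extremal Sasaki metric at every quasi-regular ray, and the smooth extension of Theorem~\ref{transKahthm} carries this over the irregular rays. Hence $\gt^+_\bfw$ is exhausted by extremal Sasaki metrics.

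\textbf{At least three CSC rays when $s_N>0$ and $l^\infty$ large.} Put $u=w^0b/w^\infty$. Grouping \eqref{functionf} by powers of $l^\infty$, the coefficient of $A_Nl^\infty$ equals $b\,(w^\infty)^{2d_N+2}P(u)$ with
\[
P(u)=u^{2d_N+2}-(d_N+1)^2u^{d_N+2}+2d_N(d_N+2)u^{d_N+1}-(d_N+1)^2u^{d_N}+1 .
\]
The key point is that, in the variable $v=u+u^{-1}$, the function $P(u)/u^{d_N+1}$ is convex on $[2,\infty)$ with a double zero at $v=2$ (a Chebyshev-type estimate) — equivalently $P(u)=(u-1)^4R(u)$ with $R(u)>0$ for all $u>0$ — so the $A_Nl^\infty$-part of $f$ is nonnegative on $b>0$ and vanishes there only at $b=w^\infty/w^0$. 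Letting $l^\infty\to\infty$ with $\bfw,l^0$ fixed, $f$ is then strictly positive on every fixed compact subset of $(0,\infty)\setminus\{w^\infty/w^0\}$, which together with $f(0)<0$, $f(+\infty)=-\infty$ and the sign change at $b=w^\infty/w^0$ forces at least three roots in the valid range — one near $0$, one near $w^\infty/w^0$, one of order $\sim l^\infty$ — each realised by a genuine CSC Sasaki metric since $A_N>0$ forces $F_{ext}>0$. The main obstacle throughout is precisely this last bookkeeping: the convexity estimate giving $P=(u-1)^4R$ with $R>0$, the simplicity of the zero of $f$ at $w^\infty/w^0$, and the check that the three large-$l^\infty$ roots genuinely lie in $\gt^+_\bfw$ with $\tilde{F}_{ext}$ positive; the reduction to the admissible setting, the positivity of $F_{ext}$, and the smooth extension to irregular rays all come for free from Theorem~\ref{transKahthm} and Section~\ref{kahlergeometry}. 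This reproduces the orbifold analogues of Theorems~1.1 and~1.3 of \cite{BoTo14a}.
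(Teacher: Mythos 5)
Your overall strategy is the paper's: reduce to the admissible K\"ahler computations on $(S_n,\grD_{\bfm,N})$ via Theorem \ref{orbquot} and Theorem \ref{transKahthm}, and then count roots of the polynomial $f$ of \eqref{functionf} exactly as in Section 6.2 of \cite{BoTo14a} (the paper's proof is essentially a one-line deferral to that argument). The extremal-exhaustion step ($A_N\ge 0$ forces $F_{ext}>0$) and the reduction to $b=v^\infty/v^0\in(0,\infty)$ are fine. However, there is a genuine error in your key sign-change step for the existence of a CSC ray: the zero of $f$ at $b=w^\infty/w^0$ is \emph{not} simple. Writing $t=w^\infty/w^0$, a direct computation with \eqref{functionf} shows that the coefficients of both $A_Nl^\infty$ and $l^0$ in $f'(t)$, and again in $f''(t)$, vanish identically in $d_N,w^0,w^\infty$, so the zero has multiplicity at least three. (For instance, with $d_N=1$ and $\bfw=(2,1)$ one finds
\begin{equation*}
f(b)=(2b-1)^3\bigl(A_Nl^\infty\, b(2b-1)-2l^0(4b^3+4b^2-4b-1)\bigr),
\end{equation*}
so the multiplicity is exactly three.) Your claim of simplicity is also internally inconsistent with your own (correct) observation in the third step that the $A_Nl^\infty$-coefficient of $f$ vanishes to order \emph{four} at $u=1$, i.e.\ at $b=t$: if that part vanishes to order four and $f$ vanished to order one, the $l^0$-part would have to vanish to order exactly one, which it does not.

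The conclusion is salvageable, but only after the computation you skipped: one must show the multiplicity is exactly three with $f'''(t)>0$ (equivalently, that the $l^0$-part of $f$ factors as $(w^0b-w^\infty)^3$ times a polynomial that is negative at $b=t$ after accounting for the overall sign), so that $f$ still changes sign at $t$, from negative to positive as $b$ increases through $t$; combined with $f(0)<0$ and $f(b)\to-\infty$ this yields the extra root in $(t,\infty)$. This is precisely the computation carried out in Section 6.2 of \cite{BoTo14a}, which you cite for a statement it does not make. The same issue propagates into your three-root count for large $l^\infty$: the sign of $f$ immediately to the left and right of $t$ is governed by the competition between the order-three $l^0$-term and the order-four $A_Nl^\infty$-term, so the bookkeeping there also requires the exact multiplicity and the sign of $f'''(t)$, not just the unproven positivity of $R(u)$ that you flag. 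As written, the proposal does not establish the sign change on which both the first and third assertions rest.
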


\begin{theorem}\label{admjoinse}
Let $M_{\bfl,\bfw}=M\star_{\bfl}S^3_\bfw$ be the $S^3_\bfw$-join with a quasi-regular Sasaki manifold (orbifold) $M$ which is an $S^1$ orbibundle over a compact positive K\"ahler-Einstein orbifold $N$ with primitive K\"ahler class $[\gro_N]\in H^2_{orb}(N,\bbz)$. Assume that the relatively prime positive integers $(l^0,l^\infty)$ are the relative Fano indices given explicitly by 
$$l^{0}=\frac{\cali_N}{\gcd(w^0+w^\infty,\cali_N)},\qquad   l^\infty=\frac{w^0+w^\infty}{\gcd(w^0+w^\infty,\cali_N)},$$ where $\cali_N$ denotes the orbifold Fano index of $N$.  Then for each vector $\bfw=(w^0,w^\infty)\in \bbz^+\times\bbz^+$ with relatively prime components satisfying $w^0>w^\infty$ there exists a Reeb vector field $\xi_\bfv$ in the 2-dimensional $\bfw$-Sasaki cone  $\gt^+_\bfw$ on $M_{\bfl,\bfw}$ such that the corresponding Sasakian structure $\cals_\bfv=(\xi_\bfv,\eta_\bfv,\Phi,g)$ is Sasaki-Einstein. Moreover, this ray is the only admissible CSC ray in the Sasaki cone. 
\end{theorem}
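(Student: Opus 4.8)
The plan is to reduce Theorem \ref{admjoinse} to the Kähler–geometric dictionary already assembled in Section \ref{kahlergeometry} together with the explicit transverse data of Theorem \ref{transKahthm}, and then to an elementary analysis of the single integral appearing in \eqref{KEintegral}. Since $N$ is assumed Kähler--Einstein, the admissible construction of Section \ref{TKS} applies with $A_N=\cali_N$, and by Theorem \ref{transKahthm} the Sasakian structures along the rays of $\gt^+_\bfw$ are exactly the lifts of the admissible Kähler metrics on the quotient orbifolds $(S_n,\grD_{\bfm,N})$, the ray being recorded by $b=v^\infty/v^0$. The hypothesis that $(l^0,l^\infty)$ are the relative Fano indices is precisely condition \eqref{SEjoin}, i.e. $c_1(\cald_{\bfl,\bfw})=0$; substituting $l^0(w^0+w^\infty)=\cali_N l^\infty$ together with the expressions for $n,m^0,m^\infty,r$ from Theorem \ref{orbquot} into the second Kähler--Einstein condition $2r\cali_N/n=(1+r)/m^\infty+(1-r)/m^0$ of Section \ref{kahlergeometry}, one checks that it holds identically for every quasi-regular $\xi_\bfv\in\gt^+_\bfw$. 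Hence, for these $\bfl$, a Sasakian structure in $\gt^+_\bfw$ is $\eta$-Einstein with positive Einstein constant --- and so, after the transverse scaling \eqref{transscale} within the ray, Sasaki--Einstein --- if and only if the remaining integral condition holds, which is \eqref{KEintegral}; and in that case $F_{ext}>0$ on $(-1,1)$ automatically by Section \ref{kahlergeometry}, so the corresponding \eqref{gT2} is a genuine Sasakian metric.

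It therefore remains to study the left-hand side of \eqref{KEintegral} as a function of $b$,
$$\Phi(b)=\int_{-1}^1\bigl((1-b)-(1+b)\gz\bigr)\bigl((b+t)+(b-t)\gz\bigr)^{d_N}\,d\gz,\qquad t=\frac{w^\infty}{w^0}\in(0,1),$$
which is a polynomial in $b$ of degree $d_N+1$ with constant term $t^{d_N}\tfrac{2^{d_N+2}}{d_N+2}>0$ and leading coefficient $-\tfrac{2^{d_N+2}}{d_N+2}<0$. For $b\in(0,\infty)$ the factor $(b+t)+(b-t)\gz$ is positive on $[-1,1]$ (it equals $2t>0$ at $\gz=-1$ and $2b>0$ at $\gz=1$), while $(1-b)-(1+b)\gz=(1+b)\bigl(\tfrac{1-b}{1+b}-\gz\bigr)$ changes sign once, at $\gz_0(b)=\tfrac{1-b}{1+b}\in(-1,1)$, with $\gz_0$ strictly decreasing in $b$. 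Evaluating at $b=t$ gives $\Phi(t)=(2t)^{d_N}\int_{-1}^1\bigl((1-t)-(1+t)\gz\bigr)d\gz=2(1-t)(2t)^{d_N}>0$, whereas $\Phi(b)\to-\infty$ as $b\to\infty$; by the intermediate value theorem $\Phi$ has a root $b_0\in(t,\infty)$. For this $b_0$ the number $r=\frac{w^0v^\infty-w^\infty v^0}{w^0v^\infty+w^\infty v^0}=\frac{w^0b_0-w^\infty}{w^0b_0+w^\infty}$ lies in $(0,1)$, so $b_0$ genuinely determines a Reeb vector field $\xi_\bfv\in\gt^+_\bfw$ (quasi-regular when $b_0\in\bbq$, irregular otherwise), and by the previous paragraph the associated ray carries a Sasaki--Einstein metric. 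This proves the existence assertion.

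For the uniqueness clauses we must show that $b_0$ is the unique positive root of $\Phi$ and that every admissible CSC ray in the Sasaki cone is Einstein. For the latter: comparing the CSC characterization \eqref{candkeqn} --- with $A_N=\cali_N$ and the displayed $\beta,c$ --- against \eqref{KEintegral}, and again invoking $l^0(w^0+w^\infty)=\cali_N l^\infty$, one verifies that in the Fano-index case \eqref{candkeqn} is equivalent to $\Phi(b)=0$ (consistently with the remark that an Einstein $b$ is always a root of $f$ in \eqref{functionf}); thus an admissible metric in $\gt^+_\bfw$ is CSC precisely when it is Sasaki--Einstein. For the former: the coefficients of $\Phi$ --- best organized after reparametrizing by $\tilde{b}_0=1/r\in(1,\infty)$, which is the variable of the polynomials with $\bfw$-linear coefficients of Section \ref{SetaEinsteinpol} --- admit a Descartes-type sign analysis which, combined with the positivity requirement $F_{ext}>0$ on $(-1,1)$, shows that exactly one root lies in the admissible range; this is the argument of Section 6.2 of \cite{BoTo14a} carried over verbatim. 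Since the admissible family is parametrized by $\gt^+_\bfw$, which by Lemma \ref{wsubcone} is preserved under $\gA\gu\gt_0(M_{\bfl,\bfw})$, the Einstein ray so obtained is the only admissible CSC ray in the Sasaki cone.

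The genuinely load-bearing steps are the two algebraic verifications of the previous paragraph --- that the Fano-index choice forces the CSC equation \eqref{candkeqn} to coincide with the Einstein equation \eqref{KEintegral}, and that the resulting polynomial has exactly one root in the admissible range --- since this is exactly where $c_1(\cald_{\bfl,\bfw})=0$ is used and is what distinguishes this statement from Theorem \ref{admjoincsc}, where several CSC rays can coexist. Everything else is a direct transcription of the regular case in \cite{BoTo14a}: the only new feature, namely that the quotient is an orbifold fibered over the possibly-singular orbifold base $(N,\grD_N)$, has already been absorbed into Theorems \ref{orbquot} and \ref{transKahthm}, which supply the transverse Kähler data \eqref{gT2} to which the scalar-curvature formula \eqref{scalarcurv} is applied.
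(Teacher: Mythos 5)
Your proposal is correct and, for the existence half, is essentially the paper's own argument: the reduction to the admissible K\"ahler conditions on $(S_n,\grD_{\bfm,N})$ via Theorems \ref{orbquot} and \ref{transKahthm} is exactly what the paper does, and Theorem \ref{almostreg} proves existence by the same intermediate value argument you apply to $\Phi(b)$, only written in the variable $k=b\,w^0/w^\infty$ (so $\calp_\bfw(1)<0$ and $\calp_\bfw(k)\to+\infty$ replace your $\Phi(t)>0$ and $\Phi(b)\to-\infty$); note in passing that the variable of Section \ref{SetaEinsteinpol} is this $k$, not $1/r=(k+1)/(k-1)$ as you state. Your check that the relative Fano indices make $2r\cali_N/n=(1+r)/m^\infty+(1-r)/m^0$ an identity is correct and implicit in the paper. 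Where you genuinely diverge is uniqueness: the paper gets uniqueness of the root of $\calp_\bfw$ in $(1,\infty)$ from the Martelli--Sparks--Yau uniqueness of Sasaki--Einstein rays \cite{MaSpYau06}, whereas you propose a Descartes-type sign count. That route does work --- the coefficients $(w^0+w^\infty)j-w^0(d+1)$ of $\calp_\bfw$ increase monotonically from a negative constant term to a positive leading coefficient, giving exactly one sign change, hence exactly one positive root, which lies in $(1,\infty)$ because $\calp_\bfw(1)<0$ --- but you assert it rather than carry it out. The other load-bearing step you correctly isolate but also leave unverified is that under $l^0|\bfw|=\cali_N l^\infty$ the admissible CSC condition \eqref{candkeqn} has the same solutions as \eqref{KEintegral}; this is true, and the cleanest justification is not a term-by-term comparison but the observation that the identity $2r\cali_N/n=(1+r)/m^\infty+(1-r)/m^0$ says precisely that $c_1^{orb}(S_n,\grD_{\bfm,N})$ is a positive multiple of the admissible class (Lemma \ref{c1orblem} together with \eqref{admissibleclass}), so a CSC metric in that class is K\"ahler--Einstein by the standard $\partial\bar\partial$ argument. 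With those two verifications supplied your proof is complete, and it buys a purely algebraic uniqueness statement in place of the paper's appeal to volume minimization.
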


\begin{remark}\label{cscrem}
The constant scalar curvature ray in Theorems \ref{admjoincsc} and \ref{admjoinse} can be either quasi-regular or irregular. In the former case the procedure can be iterated.
\end{remark}

In dimension 5 we have
\begin{corollary}\label{5cor}
A cone decomposable compact Sasaki 5-manifold $M^5$ is diffeomorphic to a lens space bundle over a Riemann surface of genus $g$. Moreover, if $g>0$ it admits a cscS ray in its Sasaki cone; whereas, if $g=0$ choosing the unique ray of Sasakian structures with constant $\Phi$ sectional curvature on $M^3$ (where $M^5=M^3\star_\bfl S^3_\bfw$) gives a constant scalar curvature Sasaki metric on $M^5$.
\end{corollary}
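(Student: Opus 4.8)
The plan is to derive this from the reducibility theory for Sasakian structures together with Theorem \ref{admjoincsc}. A cone decomposable compact Sasaki $5$-manifold is, by the reducibility results of \cite{HeSu12b,BHLT16,BoTo13}, a join $M^5=M^3\star_\bfl S^3_\bfw$ with $M^3$ a compact Sasaki $3$-manifold, which we may take to be quasi-regular; thus $M^3\longrightarrow N$ is an $S^1$-orbibundle over a compact complex one-dimensional K\"ahler orbifold $N$ whose underlying Riemann surface has some genus $g$. Applying Theorem \ref{orbquot} (so $d_N=1$), the quotient of $M^5$ by a quasi-regular Reeb field $\xi_\bfv\in\gt^+_\bfw$ is the ruled orbifold surface $(S_n,\grD_\bfm+\pi^{-1}(\grD_N))\longrightarrow N$, a $\bbc\bbp^1$-orbibundle; since $M^5\longrightarrow S_n$ is a Seifert $S^1$-bundle and $M^5$ is smooth, so that \eqref{adcond} holds, one checks that the orbifold structure of $S_n$ over the cone points of $N$ is unwound and that $M^5\longrightarrow N$ is a genuine lens space bundle over the underlying Riemann surface. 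This is exactly the classification of \cite{BoTo13} and it gives the first assertion.

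Now suppose $g>0$. Then the orbifold Euler characteristic of $N$ is nonpositive, so by uniformization of $2$-orbifolds the positive orbifold K\"ahler class $c_1^{orb}(M^3/N)$ has a representative of constant scalar curvature $s_N\le 0$. Equipping $M^3$ with the associated quasi-regular Sasakian structure realizes it as an $S^1$-orbibundle over a compact constant scalar curvature K\"ahler orbifold, so Theorem \ref{admjoincsc} applies verbatim to $M^5=M^3\star_\bfl S^3_\bfw$ and produces a constant scalar curvature Sasaki ray inside the $\bfw$-subcone $\gt^+_\bfw$, hence inside the Sasaki cone of $M^5$.

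Finally suppose $g=0$. A genus-$0$ orbifold need not carry any constant scalar curvature metric, so the argument of the previous paragraph is not directly available; instead one uses the unique ray of Sasakian structures of constant $\Phi$-sectional curvature on $M^3$, which exists by the classification of compact Sasaki $3$-manifolds \cite{Bel01,BG05}. For such a structure the transverse K\"ahler metric on $N$ has constant Gauss curvature, hence constant scalar curvature; in particular $N$ is a good genus-$0$ orbifold, and when $M^3=S^3$ this is simply the round Sasakian structure fibering over $(\bbc\bbp^1,\gro_{FS})$, a positive K\"ahler--Einstein curve. Thus $M^3$ is again an $S^1$-orbibundle over a compact constant scalar curvature K\"ahler orbifold, and Theorem \ref{admjoincsc} (with the relevant sign of $s_N$) yields a constant scalar curvature Sasaki metric on $M^5$ in the corresponding ray, which is the claim.

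I expect the main obstacle to be the topological bookkeeping in the first paragraph: showing that smoothness of $M^5$ promotes the composite Seifert fibration $M^5\to S_n\to N$ to a genuine lens space bundle over the underlying Riemann surface. This is where the smoothness constraint \eqref{adcond} and a careful analysis of the local uniformizing groups over the cone points of $N$ (in the spirit of Lemma \ref{joinorderlem}) are essential; it is carried out in \cite{BoTo13}. A secondary point, needed only when $g=0$, is the equivalence between constant $\Phi$-sectional curvature of $\cals$ on $M^3$ and constant Gauss curvature of its transverse K\"ahler metric, together with the existence of such a structure on the $3$-manifolds that occur; both follow from the structure theory of Sasaki $3$-manifolds.
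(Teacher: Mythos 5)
Your argument is correct and follows essentially the same route as the paper: reduce to Theorem \ref{admjoincsc} by exhibiting the $3$-dimensional factor as an $S^1$ orbibundle over a constant scalar curvature K\"ahler orbifold, citing \cite{BoTo13} for the lens space bundle statement. The only cosmetic difference is that for $g>0$ you invoke uniformization of the quotient $2$-orbifold directly, whereas the paper obtains the same transversely csc structure by citing Geiges's classification of Sasaki $3$-manifolds together with Belgun's contact isotopy to a metric of constant $\Phi$-sectional curvature; your observation that the $g=0$ case genuinely requires the constant $\Phi$-sectional curvature ray (since a bad genus-$0$ orbifold quotient carries no csc metric) is exactly the point of the statement's phrasing.
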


%{\color{red} Note to Charles: I edited a little bit in this corollary. Make sure that changing ``on $M$'' to ``on $M^3$ (where $M^5=M^3\star_\bfl S^3_\bfw$)'' was okay. I also changed the reference at the end of the proof below to Theorem  \ref{admjoincsc} rather than Theorem \ref{admjoinse}}

\begin{proof}
By definition $M^5$ is the join $M^3\star_\bfl S^3_\bfw$ where $M^3$ is a Sasaki 3-manifold. By \cite{Gei97} $M^3$ is diffeomorphic to one of the three 3-manifolds 
$$\grG\backslash S^3,\qquad  \grG\backslash\calh^3,\qquad \grG\backslash \widetilde{\rm SL}(2,\bbr)$$
where $\widetilde{\rm SL}(2,\bbr)$ is the universal cover of $SL(2,\bbr)$, $\calh^3$ is the Heisenberg group, and $\grG$ is a discrete group of isometries with respect to the Sasaki metric. These are all $S^1$ Seifert bundles over a Riemann surface of genus $g$ where $g=0$ in the first case, $g=1$ in the second, and $g>1$ in the third. By \cite{Bel01} for cases 2 and 3 there is a contact isotopy to a Sasaki metric of constant $\Phi$ sectional curvature. Whereas, if $g=0$ there is a unique ray of constant $\Phi$ sectional curvature in the Sasaki cone. The corollary then follows from Theorem \ref{admjoincsc} and \cite{BoTo13}.
\end{proof}

\begin{remark}\label{nothighdim}
The analogue of this corollary is definitely false in higher dimensions.
\end{remark}

\begin{remark}
It should be noted that the methods of \cite{ApCa18} also has the advantage of being able to go beyond the $\bfw$-cone (within the full Sasaki-cone), for the join $M^3\star_\bfl S^3_\bfw$, whenever the set of Killing potentials of $(N,\omega_N,\g_N)$ is non-empty. This could be useful in, for example, exploring illuminating examples for the open questions in \cite{BHLT19}.
\end{remark}

\section{The S-$\eta$-E Polynomials}\label{SetaEinsteinpol}
We now consider the S-$\eta$-E condition and equation \eqref{KEintegral} more closely. We choose a Sasakian structure $\cals_\bfv$  in the $\bfw$-cone of $M_{\bfl,\bfw}$ with Reeb field $\xi_\bfv$. Following the analysis on pages 1053-1054  \cite{BoTo14a} we define $b=\frac{v^\infty}{v^0}=kt_\bfw$ and $t_\bfw=\frac{w^\infty}{w^0}$ together with the polynomials of degree $d:=d_N$
\begin{equation}\label{p+-}
p^{\pm}(k)=\int_{-1}^1(1\pm \gz)\bigl((k+1)+(k-1)\gz\bigr)^{d}d\gz.
\end{equation}
Then equation \eqref{KEintegral} is equivalent with $b=\frac{p^-(k)}{p^+(k)}$ and $t_\bfw=\frac{p^-(k)}{kp^+(k)}$.
We can easily compute the polynomials $p^{\pm}(k)$ explicitly.

\begin{lemma}\label{pm}
The polynomials $p^\pm(k)$ defined in \eqref{p+-} satisfy
$$p^-(k)=\frac{2^{d+2}}{(d+1)(d+2)}\sum_{j=0}^d(d+1-j)k^j,\qquad p^+(k)=\frac{2^{d+2}}{(d+1)(d+2)}\sum_{j=0}^d(j+1)k^j,$$
and 
\begin{equation}\label{vrateqn}
\frac{p^-(k)}{p^+(k)}=\frac{d+1+dk+\cdots +2k^{d-1}+k^d}{1+2k+\cdots +(d+1)k^d}.
\end{equation}
\end{lemma}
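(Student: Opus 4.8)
The plan is to compute the two integrals in \eqref{p+-} directly by expanding the integrand with the binomial theorem and integrating term by term. Writing $(k+1)+(k-1)\gz = (1+\gz)k + (1-\gz)$ one sees that $\bigl((k+1)+(k-1)\gz\bigr)^d = \sum_{j=0}^d \binom{d}{j} k^j (1+\gz)^j(1-\gz)^{d-j}$, so that
$$p^+(k)=\sum_{j=0}^d \binom{d}{j} k^j \int_{-1}^1 (1+\gz)^{j+1}(1-\gz)^{d-j}\,d\gz,\qquad p^-(k)=\sum_{j=0}^d \binom{d}{j} k^j \int_{-1}^1 (1+\gz)^{j}(1-\gz)^{d-j+1}\,d\gz.$$
The remaining integrals are Beta-function (Euler) integrals: substituting $\gz = 2u-1$ gives $\int_{-1}^1 (1+\gz)^a(1-\gz)^b\,d\gz = 2^{a+b+1}\int_0^1 u^a(1-u)^b\,du = 2^{a+b+1}\,B(a+1,b+1) = 2^{a+b+1}\,\frac{a!\,b!}{(a+b+1)!}$.

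Applying this with $(a,b)=(j+1,d-j)$ for $p^+$ and $(a,b)=(j,d-j+1)$ for $p^-$, each has $a+b=d+1$, so the prefactor $2^{d+2}$ comes out uniformly. For $p^+$ the coefficient of $k^j$ becomes $\binom{d}{j}\,\frac{(j+1)!\,(d-j)!}{(d+2)!} = \frac{d!}{j!\,(d-j)!}\cdot\frac{(j+1)!\,(d-j)!}{(d+2)!} = \frac{(j+1)\,d!}{(d+2)!} = \frac{j+1}{(d+1)(d+2)}$; similarly for $p^-$ the coefficient of $k^j$ is $\binom{d}{j}\,\frac{j!\,(d-j+1)!}{(d+2)!} = \frac{(d-j+1)\,d!}{(d+2)!} = \frac{d+1-j}{(d+1)(d+2)}$. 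Factoring out $\frac{2^{d+2}}{(d+1)(d+2)}$ yields exactly the claimed closed forms $p^-(k)=\frac{2^{d+2}}{(d+1)(d+2)}\sum_{j=0}^d (d+1-j)k^j$ and $p^+(k)=\frac{2^{d+2}}{(d+1)(d+2)}\sum_{j=0}^d (j+1)k^j$. Finally, \eqref{vrateqn} follows immediately upon taking the quotient, since the common factor $\frac{2^{d+2}}{(d+1)(d+2)}$ cancels, and rewriting the $p^-$ sum in descending order of $j$ (its coefficient sequence is $d+1, d, \dots, 2, 1$) gives the stated numerator.

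There is essentially no obstacle here — the only thing to be careful about is bookkeeping: correctly identifying the exponents $a=j+1, b=d-j$ (versus $a=j, b=d-j+1$) in the two cases, and simplifying the binomial–factorial products without arithmetic slips. One may alternatively avoid the Beta integral entirely by using the standard reduction $\int_{-1}^1(1+\gz)^a(1-\gz)^b\,d\gz = \frac{a}{b+1}\int_{-1}^1(1+\gz)^{a-1}(1-\gz)^{b+1}\,d\gz$ (integration by parts) together with the base case $\int_{-1}^1(1-\gz)^{d+1}\,d\gz = \frac{2^{d+2}}{d+2}$, but the Beta-function route is cleanest. I would then remark that $p^+(k)$ and $p^-(k)$ are ``reverses'' of one another in the sense that $k^d\,p^+(1/k) = p^-(k)$, which is visible from the coefficient patterns and is convenient later.
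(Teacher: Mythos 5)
Your computation is correct, and every step checks out: the decomposition $(k+1)+(k-1)\gz=k(1+\gz)+(1-\gz)$, the binomial expansion, the Beta-integral evaluation $\int_{-1}^1(1+\gz)^a(1-\gz)^b\,d\gz=2^{a+b+1}\frac{a!\,b!}{(a+b+1)!}$, and the simplification of the coefficients to $\frac{j+1}{(d+1)(d+2)}$ and $\frac{d+1-j}{(d+1)(d+2)}$ all hold, and \eqref{vrateqn} follows by cancellation. Your route is, however, genuinely different from the paper's. The paper performs the integration in \eqref{p+-} in closed form (in effect substituting $t=(k+1)+(k-1)\gz$), arriving at the rational expressions
$$p^+(k)=\frac{2^{d+2}}{(d+1)(d+2)}\,\frac{(d+1)k^{d+2}-(d+2)k^{d+1}+1}{(k-1)^2},\qquad p^-(k)=\frac{2^{d+2}}{(d+1)(d+2)}\,\frac{k^{d+2}-(d+2)k+d+1}{(k-1)^2},$$
and then asserts that the stated polynomial sums are ``easily verified'' from these, i.e.\ one must still check a polynomial identity obtained by clearing the $(k-1)^2$ denominator (or by differentiating a geometric sum). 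Your approach produces the coefficient of each $k^j$ directly and never introduces the removable singularity at $k=1$, so there is no residual identity left to verify; the price is the slightly heavier bookkeeping with binomial coefficients and factorials, which you handle correctly. Your closing observation that $k^d\,p^+(1/k)=p^-(k)$ is also correct and is a nice structural remark that the paper does not make.
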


\begin{proof}
Performing the integration in \eqref{p+-} explicitly we find
\begin{eqnarray*}
p^+(k)&=&\frac{2^{d+2}}{(d+1)(d+2)}\frac{(d+1)k^{d+2}-(d+2)k^{d+1}+1}{(k-1)^2}, \\
p^-(k)&=&\frac{2^{d+2}}{(d+1)(d+2)}\frac{k^{d+2}-(d+2)k+d+1}{(k-1)^2},
\end{eqnarray*}
from which the expressions for $p^\pm(k)$ are easily verified.
\end{proof}

We can use transverse scaling to reduce our discussion to the 1 dimensional space of rays $\gR_\bfw$ of $\gt^+_\bfw$ which is uniquely determined by its slope. We can thus ignore the factor $\frac{2^d}{(d+1)(d+2)}$ in front of the sums in $p^\pm(k)$.

Then as in \cite{BoTo14a} we have

\begin{theorem}\label{almostreg}
Consider Sasaki join manifolds $M_{\bfl,\bfw}$ with $w^0>w^\infty$ relatively prime positive integers and assume $c_1(\cald_{\bfl,\bfw})=0$. Then for each pair $(w^0,w^\infty)$ of relatively prime positive integers with $w^0>w^\infty$ there exists a unique S-$\eta$-E ray $\gr_\bfv$ where $\bfv=(v^0,v^\infty)=(p^+(k),p^-(k))$
which corresponds to the unique root $k\in (1,\infty)$ of the polynomial
\begin{equation}\label{SetaEpoly}
\calp_\bfw(k)=w^\infty (d+1) k^{d+1} +\sum_{j=0}^d\bigl(|\bfw|j-w^0(d+1)\bigr)k^j.
\end{equation}
Furthermore, the S-$\eta$-E ray is quasi-regular if and only $k\in (1,\infty)\cap\bbq$.
 
%\begin{enumerate}
%\item for any  $k\in(1,\infty)\cap\bbq$ there exists $(w^0,w^\infty)$ and a unique quasi-regular S-$\eta$-E ray $\gr_\bfv$ in $\gt^+_\bfw$ which satisfies 
%\begin{equation}\label{kteqn1}
%\frac{v^\infty}{v^0}=kt_\bfw=\frac{p^-(k)}{p^+(k)};
%\end{equation}
%\item if $k\in(1,\infty)$ is irrational and there exist $(w^0,w^\infty)$ such that
%$$\frac{kp^+(k)}{p^-(k)}=\frac{w^0}{w^\infty}$$
%then $v^0/v^\infty$ gives a unique irregular S-$\eta$-E ray. 
%\item %\end{enumerate}

\end{theorem}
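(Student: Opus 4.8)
The plan is to reduce the assertion about the Sasaki--$\eta$--Einstein ray to a question about the roots of the explicit polynomial $\calp_\bfw(k)$, and then to analyze that polynomial directly. First I would recall from equation \eqref{KEintegral} and the discussion just before Lemma \ref{pm} that the $\eta$--Einstein condition for a Reeb vector field $\xi_\bfv$ in the $\bfw$-cone (with $c_1(\cald_{\bfl,\bfw})=0$, so that $\bfl$ is forced to be the relative Fano index pair) is equivalent to $b = \tfrac{p^-(k)}{p^+(k)}$ together with $t_\bfw = \tfrac{p^-(k)}{k\,p^+(k)}$, where $b = v^\infty/v^0$ and $t_\bfw = w^\infty/w^0 = b/k$. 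Substituting $b = v^\infty/v^0$ and clearing denominators, the condition $t_\bfw = \tfrac{p^-(k)}{k\,p^+(k)}$ becomes $w^\infty\, k\, p^+(k) = w^0\, p^-(k)$. Using the explicit formulas from Lemma \ref{pm} (dropping the common constant $\tfrac{2^{d+2}}{(d+1)(d+2)}$ as licensed by transverse scaling), $k\,p^+(k) = \sum_{j=0}^d (j+1)k^{j+1} = \sum_{i=1}^{d+1} i\, k^i$ and $p^-(k) = \sum_{j=0}^d (d+1-j)k^j$, so the equation $w^\infty k p^+(k) - w^0 p^-(k) = 0$ is exactly $\calp_\bfw(k) = 0$ after collecting the coefficient of each $k^j$: the top coefficient is $w^\infty(d+1)$, and for $0\le j \le d$ it is $w^\infty j - w^0(d+1-j) = |\bfw|j - w^0(d+1)$, matching \eqref{SetaEpoly}. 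This shows the ray $\gr_\bfv$ with $\bfv = (p^+(k), p^-(k))$ is Sasaki--$\eta$--Einstein precisely when $k$ is a positive root of $\calp_\bfw$, and one should check that $(p^+(k), p^-(k))$ indeed gives a valid Reeb vector field in $\gt^+_\bfw$ (positivity of both components for $k>1$ is clear from Lemma \ref{pm}, and after dividing by $\gcd$ one gets the coprime pair $(v^0,v^\infty)$).

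Next I would establish existence and uniqueness of a root $k \in (1,\infty)$. Since $w^0 > w^\infty$, we have $|\bfw|j - w^0(d+1) < 0$ for all $0 \le j \le d$ (as $|\bfw|j \le |\bfw|d < (w^0+w^\infty)(d+1) \le \dots$ — more carefully, $|\bfw|j \le |\bfw|d$ and one wants this $< w^0(d+1)$; since $|\bfw| = w^0 + w^\infty < 2w^0$ and $d < d+1$, indeed $|\bfw|d < 2w^0 d$, which is not immediately $\le w^0(d+1)$, so this needs a slightly more careful inequality — in fact $|\bfw|j - w^0(d+1)$ at $j=d$ equals $w^0 d + w^\infty d - w^0 d - w^0 = w^\infty d - w^0$, which is negative iff $w^0 > w^\infty d$; this is NOT always true, so the sign pattern is more subtle). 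The robust route is to evaluate $\calp_\bfw$ at the endpoints: at $k = 1$, $\calp_\bfw(1) = w^\infty(d+1) + \sum_{j=0}^d(|\bfw|j - w^0(d+1)) = w^\infty(d+1) + |\bfw|\tfrac{d(d+1)}{2} - w^0(d+1)^2 = (d+1)\bigl(w^\infty + |\bfw|\tfrac{d}{2} - w^0(d+1)\bigr)$; plugging $|\bfw| = w^0+w^\infty$ this is $(d+1)\bigl(w^\infty + (w^0+w^\infty)\tfrac d2 - w^0 d - w^0\bigr) = (d+1)\bigl(w^\infty(1+\tfrac d2) - w^0(1+\tfrac d2)\bigr) = (d+1)(1+\tfrac d2)(w^\infty - w^0) < 0$ since $w^0 > w^\infty$. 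And the leading coefficient $w^\infty(d+1) > 0$, so $\calp_\bfw(k) \to +\infty$ as $k \to \infty$; by the intermediate value theorem there is a root in $(1,\infty)$. For uniqueness I would show $\calp_\bfw$ has at most one root in $(1,\infty)$, e.g.\ by Descartes' rule of signs applied to the coefficient sequence: the coefficients for $j=0,\dots,d$ are $|\bfw|j - w^0(d+1)$, which is strictly increasing in $j$, so there is at most one sign change among $a_0 < a_1 < \dots < a_d$, and then the leading coefficient $a_{d+1} = w^\infty(d+1) > 0$ adds at most one more potential sign change; combined with $\calp_\bfw(1) < 0$ and a monotonicity argument on $(1,\infty)$ (or noting the polynomial $w^\infty k p^+(k) - w^0 p^-(k)$ has derivative structure one can control), one pins down exactly one root. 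Alternatively, and more cleanly, one divides the equation $w^\infty k p^+(k) = w^0 p^-(k)$ to get $\tfrac{p^-(k)}{k p^+(k)} = t_\bfw$ and shows the left side is a strictly monotone function of $k$ on $(1,\infty)$ with the right range covering $(0,1) \ni t_\bfw$; this is the approach taken on pages 1053--1054 of \cite{BoTo14a}, which the theorem statement explicitly follows.

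Finally, the quasi-regularity dichotomy is immediate from the correspondence: the Reeb vector field $\xi_\bfv$ is quasi-regular exactly when it lies on a rational ray of $\gt^+_\bfw$, i.e.\ when $v^\infty/v^0 = b \in \bbq^+$; since $b = t_\bfw k = (w^\infty/w^0)k$ and $w^0, w^\infty$ are integers, $b \in \bbq$ iff $k \in \bbq$. Equivalently, $\bfv = (p^+(k), p^-(k))$ has rational (hence, after scaling, integer) components precisely when $k \in \bbq$, because $p^\pm$ are polynomials with rational coefficients and conversely if both $p^+(k)$ and $p^-(k)$ are rational then so is their difference-ratio which recovers $k$ via \eqref{vrateqn}. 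So the ray is quasi-regular iff $k \in (1,\infty) \cap \bbq$, as claimed. I expect the main obstacle to be the uniqueness half of the existence/uniqueness claim: verifying that $\calp_\bfw$ has no second root in $(1,\infty)$ requires either a careful sign-change count or a genuine monotonicity argument for $\tfrac{p^-(k)}{k p^+(k)}$, and the honest version of that monotonicity is the technical heart (it is exactly what \cite{BoTo14a} does on those pages), whereas the translation to $\calp_\bfw$ and the rationality statement are routine algebra once the setup from Lemma \ref{pm} is in hand.
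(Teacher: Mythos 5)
Your proposal is correct, and for the translation to $\calp_\bfw$, the existence argument, and the quasi-regularity dichotomy it matches the paper's proof step for step: the paper likewise reduces \eqref{KEintegral} to $w^\infty k\,p^+(k)=w^0 p^-(k)$ via Lemma \ref{pm}, computes $\calp_\bfw(1)=-\tfrac{(d+1)(d+2)}{2}(w^0-w^\infty)<0$ (your endpoint evaluation agrees with this after simplification), and invokes the intermediate value theorem together with the lattice characterization of quasi-regular Reeb fields. Where you genuinely diverge is the uniqueness of the root in $(1,\infty)$: the paper disposes of this in one line by observing that roots of $\calp_\bfw$ in $(1,\infty)$ correspond to S-$\eta$-E rays, which are unique by the volume-minimization result of \cite{MaSpYau06} --- a geometric fact imported wholesale. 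You instead propose an algebraic count, and although you hedge, your Descartes' rule of signs sketch actually closes completely: the full coefficient sequence $a_0<a_1<\cdots<a_d<a_{d+1}$ is strictly increasing (note $a_d=w^\infty d-w^0<w^\infty(d+1)=a_{d+1}$ since their difference is $-(w^0+w^\infty)$), starts negative at $a_0=-w^0(d+1)$ and ends positive, so it has exactly one sign change; hence $\calp_\bfw$ has exactly one positive root counted with multiplicity, which must lie in $(1,\infty)$ because $\calp_\bfw(1)<0$ and $\calp_\bfw(k)\to+\infty$. This buys a self-contained, elementary proof of uniqueness that relies neither on \cite{MaSpYau06} nor on the monotonicity analysis of \cite{BoTo14a}, whereas the paper's route buys brevity and makes the geometric meaning of uniqueness (uniqueness of the Einstein polarization) transparent; you need not have wavered on this point, since the sign-change count alone suffices and no separate monotonicity argument on $(1,\infty)$ is required.
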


\begin{proof}
Given the relatively prime integers $w^0>w^\infty$ it follows as in Section 6.2 of \cite{BoTo14a} that there is a unique S-$\eta$-E ray corresponding to $k\in (1,\infty)$ such that
\begin{equation}\label{tjeqn2}
w^\infty kp^+(k)= w^0p^-(k).
\end{equation}
It follows easily from Lemma \ref{pm} that the solutions of \eqref{tjeqn2} correspond precisely to roots of the polynomial $\calp_\bfw$. This correspondence is that the ray $\gr_\bfv$ with $\bfv=(p^+(k),p^-(k))$ corresponds to the root $k\in(1,\infty)$.
That there exists a root $k\in(1,\infty)$ follows since 
$$\calp_\bfw(1)= -\frac{(d+1)(d+2)}{2}(w^0-w^\infty)<0$$ 
and $\lim_{k\rightarrow \infty}\calp_\bfw(k)=+\infty$. 
Moreover,  this root is unique since roots of $\calp_\bfw$ in $(1,\infty)$ correspond to S-$\eta$-E rays which by \cite{MaSpYau06} are unique. 

Since $\bfv=(v^0,v^\infty)$ parameterizes the $\bfw$ cone $\gt^+_\bfw$ on $M_{\bfl,\bfw}$, we see that a Reeb vector field $\xi_\bfv$ is quasi-regular if and only if $\bfv$ lies on the integral lattice $(\bbz^+)^2\subset (\bbr^+)^2$. Otherwise $\xi_\bfv$ will be irregular. Since we use the convention $w^0>w^\infty$ this translates to: $\xi_\bfv$ is quasiregular if and only if $k\in (1,\infty)\cap\bbq$. 
\end{proof} 

\begin{remark}
From the discussion in Section 3.1 in \cite{BoTo14a} it follows that the Gorenstein condition $c_1(\cald_{\bfl,\bfw})=0$ is equivalent to the conditions at the beginning of Theorem \ref{admjoinse} and thus Theorem \ref{almostreg} should be seen as a more explicit exposition of the Sasaki $\eta$-Einstein solutions implied by Theorem \ref{admjoinse}. It is straightforward to generalize these results to the $\bbq$-Gorenstein case when $c_1(\cald_{\bfl,\bfw})$ is a torsion class.
\end{remark}

\begin{remark}
The unique almost regular ray ($(v^0,v^\infty)=(1,1)$, i.e., $b=1$) is never  S-$\eta$-E (since $\calp_\bfw(1) \neq 0$). 
\end{remark}

\begin{remark}
Note that if $k\in\bbz^+$, $\bfw$ satisfies
\begin{equation}\label{weqn}
\bfw=(w^0,w^\infty)=\bigl(\frac{kp^+(k)}{(\gcd(p^-(k),kp^+(k))},\frac{p^-(k)}{(\gcd(p^-(k),kp^+(k))}\bigr).
\end{equation}
\end{remark}

%To prove (2) of Definition \ref{admisitjoin} we use induction and the analysis in \cite{BoTo14a} adopted to the quasi-regular case as in \cite{BoTo18c}. As is done there we can assume that all the intermediate stages are monotone. 
%Then it follows from Theorem \ref{admjoinse} and the analysis on page 1054 of \cite{BoTo14a} that the intermediate stages all have a unique cscS ray in the $\bfw$ Sasaki cone $\gt^+_\bfw$.  

We can give a slightly different way of treating quasi-regular S-$\eta$-E rays.
Note that if $k=\frac{p}{q}$ is any rational number and we define 
$$F(p,q)= (d+1)q^d+dq^{d-1}p+\cdots +2qp^{d-1}+p^d$$
then we have
\begin{equation}\label{ppmF}
\frac{p^-(k)}{p^+(k)}=\frac{F(p,q)}{F(q,p)}.
\end{equation}
We define the map $\grk:(1,\infty)\cap\bbq\longrightarrow \gR_{\bfw} \approx\bbr^+$ by
\begin{equation}\label{gImap}
\grk(k)= \frac{d+1+dk+\cdots +2k^{d-1}+k^{d}}{1+2k+\cdots +dk^{d-1} +(d+1)k^{d}}= \frac{v^\infty}{v^0}.
\end{equation}
We can identify $(1,\infty)\cap \bbq$ with the set $\calr$ of pairs $(p,q)$ of relatively prime positive integers with $p>q$. We have an injective map $\grk:\calr\longrightarrow \calr$ defined by 
\begin{equation}\label{gImap2}
\grk(p,q)= (\frac{F(q,p)}{C},\frac{F(p,q)}{C})=(v^0,v^\infty)
\end{equation}
where $C=\gcd(F(q,p),F(p,q))$.   
Then we have

\begin{corollary}\label{SEcor}
A quasi-regular ray $\gr_\bfv$ defined by $\bfv\in\calr$ is S-$\eta$-E if and only if $\bfv\in {\rm image}$ of $\grk$ and satisfies the constraint
\begin{equation}\label{wcon}
\frac{w^\infty}{w^0}=\frac{qF(p,q)}{pF(q,p)}=\frac{qv^\infty}{pv^0}.
\end{equation}
\end{corollary}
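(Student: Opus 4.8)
The plan is to derive Corollary \ref{SEcor} directly from Theorem \ref{almostreg} by unwinding the correspondence between rational points $k\in(1,\infty)\cap\bbq$ and the quasi-regular rays, keeping track of primitivity. First I would note that by Theorem \ref{almostreg}, a quasi-regular S-$\eta$-E ray is exactly one of the form $\gr_\bfv$ with $\bfv=(v^0,v^\infty)=(p^+(k),p^-(k))$ (up to transverse scaling, i.e.\ up to the common factor $\frac{2^{d+2}}{(d+1)(d+2)}$ which we agreed to drop) for some rational root $k\in(1,\infty)$ of $\calp_\bfw$. Writing $k=p/q$ in lowest terms with $p>q>0$ coprime, equation \eqref{ppmF} gives $\frac{p^-(k)}{p^+(k)}=\frac{F(p,q)}{F(q,p)}$, so the primitive integer vector representing the ray is $(v^0,v^\infty)=\bigl(\frac{F(q,p)}{C},\frac{F(p,q)}{C}\bigr)$ with $C=\gcd(F(q,p),F(p,q))$; this is precisely $\grk(p,q)$ by \eqref{gImap2}. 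Hence $\bfv$ lies in the image of $\grk$.

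Next I would translate the defining equation \eqref{tjeqn2} of the S-$\eta$-E ray, namely $w^\infty\,kp^+(k)=w^0\,p^-(k)$, into the constraint \eqref{wcon}. Substituting $k=p/q$ and using $p^+(p/q)\propto F(q,p)/q^d$ and $p^-(p/q)\propto F(p,q)/q^d$ (clearing the common denominator $q^d$ and the universal constant), \eqref{tjeqn2} becomes $w^\infty\cdot\frac{p}{q}\cdot F(q,p)=w^0\cdot F(p,q)$, i.e.\ $\frac{w^\infty}{w^0}=\frac{q\,F(p,q)}{p\,F(q,p)}$; and since $F(q,p)/F(p,q)=v^0/v^\infty$ up to the scaling $C$, the right side equals $\frac{q v^\infty}{p v^0}$ as well. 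This gives the forward direction: a quasi-regular S-$\eta$-E ray has $\bfv\in\mathrm{image}(\grk)$ and satisfies \eqref{wcon}. For the converse, suppose $\bfv=\grk(p,q)$ and \eqref{wcon} holds; then reversing the algebra shows $k=p/q$ satisfies \eqref{tjeqn2}, equivalently $\calp_\bfw(p/q)=0$ by Lemma \ref{pm}, so $k$ is the (unique) rational root of $\calp_\bfw$ in $(1,\infty)$, and Theorem \ref{almostreg} identifies $\gr_\bfv$ as the S-$\eta$-E ray, which is quasi-regular since $k\in\bbq$.

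The only genuinely delicate point—the one I'd flag as the main obstacle—is the bookkeeping of \emph{primitivity} and the constant $C=\gcd(F(q,p),F(p,q))$: one must check that the primitive integer generator of the ray $\gr_\bfv$ really is $\grk(p,q)$ and not a proper multiple or submultiple of it, and that dividing by $C$ is compatible with the coprimality requirement $\gcd(v^0,v^\infty)=1$ built into the notion of a quasi-regular Reeb field on the integral lattice $\grL_\bfw$. This is essentially the content of the map $\grk$ being \emph{injective} (asserted just before the corollary) together with the observation that $p>q$ forces $F(p,q)>F(q,p)$ so the ordering $v^\infty/v^0$ versus the ordering $w^0>w^\infty$ are consistent with the convention in Theorem \ref{almostreg}. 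Once this is in place the rest is a direct substitution and the corollary follows. I do not expect any analytic input beyond what Theorem \ref{almostreg} already supplies; the argument is purely the arithmetic of rewriting \eqref{tjeqn2} via \eqref{ppmF}.
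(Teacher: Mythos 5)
Your argument is correct and is exactly the derivation the paper intends: Corollary \ref{SEcor} is stated without proof as an immediate consequence of Theorem \ref{almostreg}, obtained by substituting $k=p/q$ into \eqref{tjeqn2} and using the identity \eqref{ppmF}, which is precisely what you carry out, including the correct attention to the primitivity constant $C$. One small slip: for $p>q$ one has $F(q,p)>F(p,q)$ (not the reverse, as you wrote), and it is this inequality that guarantees $v^0=F(q,p)/C>v^\infty=F(p,q)/C$ so that $\grk$ lands in $\calr$; the consistency of orderings you conclude from it is unaffected.
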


\section{Relation with K-stability}
The purpose of this section is to show that in a well defined sense the $S^3_\bfw$ join operation preserves certain K-stability properties. For quasi-regular Sasaki metrics such stability properties are equivalent to the stability properties of certain K\"ahler orbifolds which was investigated in detail in \cite{RoTh11}. However, to treat the general Sasaki metrics one needs to work on the affine cone. For any contact manifold $M$ we consider the cone $C(M)=M\times \bbr^+$. Choosing a 1-form $\eta$ in the contact structure of $M$ we form an exact symplectic structure on $C(M)$ by defining $\gro=d(r^2\eta)$ where $r\in\bbr^+$. The pair $(C(M),\gro)$ is called the symplectization of the (strict) contact structure $(M,\eta)$. Using the Liouville vector field $\Psi=r\partial_r$ we define a natural almost complex structure $I$ on $C(M)$ by
$$ IX=\Phi X +\eta(X)\Psi,\qquad  I\Psi=-\xi$$
where $X$ is a vector field on $M$, and the Reeb field $\xi$ is understood to be lifted to $C(M)$. Without further ado we shall identify $M$ with $M\times\{r=1\}\subset C(M)$. By adding the cone point we obtain an affine variety $Y=C(M)\cup\{0\}$ which is invariant under the complexification $\bbt^\bbc$ of the torus action $\bbt^k$. This was done by Collins and Sz\'ekelyhidi \cite{CoSz12} and it is this approach that we follow here. We begin by defining two important functionals, the {\it total volume} and the {\it total transverse scalar curvature}, viz. 
\begin{equation}\label{VS}
\bfV_\xi=\int_Mdv_g,\qquad \bfS_\xi=\int_Ms^Tdv_g
\end{equation}
where $s^T$ denotes the transverse scalar curvature of the Sasakian structure $\cals=(\xi,\eta,\Phi,g)$. We define the {\it Donaldson-Futaki invariant} on the polarized affine cone $(Y,\xi)$ by
\begin{equation}\label{DonFut}
{\rm Fut}(Y,\xi,a) := \frac{\bfV_\xi}{n}  D_{a}\left( \frac{\bfS_\xi}{\bfV_\xi} \right)+ \frac{\bfS_\xi D_{a}\bfV_\xi}{n(n+1)\bfV_\xi}.
\end{equation}
To proceed further we need the definition of a special type of degeneration known as a {\it test configuration} due to Donaldson \cite{Don02} in the K\"ahler case and Collins-Sz\'ekelyhidi \cite{CoSz12} in the Sasaki case:

\begin{definition}\label{testconf}
Let $(Y,\xi)$ be a polarized affine variety with an action of a torus $T^{\bbc^*}$ for which the Lie algebra $\gt$ of a maximal compact subtorus $T^\bbr$ contains the Reeb vector field $\xi$. A {\it $T^{\bbc}$-equivariant test configuration} for $(Y,\xi)$ is given by a set of $k$ $T^{\bbc}$-invariant generators $f_1,\dots, f_k$ of the coordinate ring $\calh$ of $Y$ and $k$ integers $w_1, \ldots, w_k$ (weights). The functions $f_1,\dots, f_k$ are used to embed $Y$ in $\bbc^k$ on which the weights $w_1, \ldots, w_k$ determine a $\bbc^*$ action. By taking the flat limit of the orbits of $Y$ to $0\in \bbc$ we get a family of affine schemes $\mathcal{Y} \longrightarrow \bbc.$ There is then an action of $\bbc^*$ on the `central fiber' $Y_0$, generated by $a \in\gt'$, the Lie algebra of  some torus $T'^\bbc \subset {\rm GL}(k,\bbc)$ containing $T^\bbc$. 
\end{definition}

We now obtain the correct notions of K-stability following \cite{Tia97,Don02,CoSz12,CoSz15}

\begin{definition}\label{stabdef}
We say that the polarized affine variety $(Y,\xi)$ is {\it K-semistable} if for each $T^\bbc$ such that $\xi \in\gt$ the Lie algebra of $T^\bbr$ and any $T^\bbc$-equivariant test configuration we have
\begin{equation}\label{Futinv}
{\rm Fut}(Y,\xi,a)\geq 0
\end{equation}
where $a\in \gt'$ is the infinitesimal generator of the induced $S^1$ action on the central fiber $Y_0$. The polarized variety $(Y,\xi)$ is said to be {\it K-polystable} if equality holds only for the product configuration $\mathcal{Y}=Y\times\bbc$.
\end{definition}

The general Yau-Tian-Donaldson conjecture in the Sasaki case, which says that cscS metrics corresponds to some kind of K-stability on the affine cone, is still open. It has has been affirmed in one direction by Collins and Sz\'ekelyhidi. 

\begin{theorem}[\cite{CoSz12}]\label{CoSzcscthem} 
Let $(M,\cals)$ be a Sasaki manifold of constant scalar curvature. Then its polarized affine cone $(Y,\xi)$ is K-semistable. 
\end{theorem}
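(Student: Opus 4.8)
The plan is to follow the argument of Collins and Sz\'ekelyhidi \cite{CoSz12}. The strategy is to recognize the Donaldson--Futaki invariant $\mathrm{Fut}(Y,\xi,a)$ as the asymptotic slope of a natural energy functional --- a Sasaki analogue of the Mabuchi $K$-energy, which, by way of the equivariant index character of the polarized affine cone $(Y,\xi)$, can equivalently be read off from the two leading coefficients of the Hilbert series of $Y$ together with the weight of the induced $\bbc^*$-action on the central fibre $Y_0$ --- and then to use that a constant scalar curvature Sasaki metric is a critical point of this energy along which the energy is convex, so that its slope at infinity dominates its slope at the cscS metric, which is zero.

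In more detail I would proceed as follows. First, given a $T^\bbc$-equivariant test configuration for $(Y,\xi)$, namely a system of $T^\bbc$-invariant generators $f_1,\dots,f_k$ of the coordinate ring and weights $w_1,\dots,w_k$, use the embedding $Y\hookrightarrow\bbc^k$ and the associated $\bbc^*$-action to produce the flat family $\mathcal{Y}\to\bbc$ together with a ray of Sasakian structures on $M$ obtained by pulling back a fixed background structure under the moving embeddings; as $t\to\infty$ these degenerate to the Sasakian structure carried by the central fibre $Y_0$. Second, express $\mathrm{Fut}(Y,\xi,a)$ through the asymptotic expansion of the equivariant Hilbert series of $(Y,\xi)$: its two leading coefficients recover $\bfV_\xi$ and $\bfS_\xi$ as in \cite{CoSz12}, while the $\bbc^*$-weight on $Y_0$ supplies the directional derivatives $D_a\bfV_\xi$ and $D_a(\bfS_\xi/\bfV_\xi)$, so that $\mathrm{Fut}$ appears as a limit of normalized Chow-type weights along the degeneration. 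Third --- the crucial step --- establish a Sasaki version of Donaldson's lower bound on the Calabi functional,
\[
\bigl\| s^T_g - \overline{s^T} \bigr\|_{L^2(M,dv_g)} \;\geq\; -\,C\,\frac{\mathrm{Fut}(Y,\xi,a)}{\|a\|}, \qquad \overline{s^T}=\frac{\bfS_\xi}{\bfV_\xi},
\]
for a suitable norm on the test configuration; equivalently, show that the Sasaki $K$-energy is convex along the weak geodesic ray associated with the degeneration and that its asymptotic slope is bounded below by $\mathrm{Fut}(Y,\xi,a)$, which in turn is non-negative at a zero of the corresponding moment map. Finally, since $g$ is cscS the left-hand side vanishes, forcing $\mathrm{Fut}(Y,\xi,a)\geq 0$; as the test configuration was arbitrary, $(Y,\xi)$ is $K$-semistable.

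The main obstacle is the irregular case, where $\xi$ generates an irrational orbit and $(Y,\xi)$ carries no honest algebraic polarization, so one cannot directly invoke Donaldson's projective machinery or the orbifold theory of Ross and Thomas \cite{RoTh11}. One must instead work with the affine cone $Y$ and its index character with real weights, controlling the resulting non-polynomial asymptotics carefully, and supply genuine analytic input for the lower-bound step: either regularity of the weak geodesic ray together with boundedness and convexity of the $K$-energy along it, or an approximation of the irregular Reeb field and of the given test configuration by quasi-regular data --- reducing locally to polarized K\"ahler orbifolds, where the Calabi-functional bound is classical --- together with a continuity statement showing that $\mathrm{Fut}$ passes to the limit. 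Making this approximation of test configurations across nearby Reeb vector fields precise is the technical heart of the argument.
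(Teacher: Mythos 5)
The paper does not prove this statement: Theorem \ref{CoSzcscthem} is imported verbatim from Collins--Sz\'ekelyhidi \cite{CoSz12} and is used purely as a black box (its only role is to feed into Corollary \ref{stabcor}), so there is no internal argument to compare yours against. Measured against the actual proof in \cite{CoSz12}, your outline has the right skeleton: the Donaldson--Futaki invariant extracted from the pole coefficients of the equivariant index character of $(Y,\xi)$ and of the central fibre, a Sasaki analogue of Donaldson's lower bound for the Calabi functional, and the observation that a cscS metric makes $\bigl\| s^T-\bfS_\xi/\bfV_\xi\bigr\|_{L^2}$ vanish and hence forces ${\rm Fut}(Y,\xi,a)\geq 0$. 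That final deduction is logically sound as you state it.

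The caveats concern the mechanism you propose for the ``crucial step''. Collins and Sz\'ekelyhidi do not obtain the lower bound from convexity of a $K$-energy along weak geodesic rays; they adapt Donaldson's finite-dimensional scheme \cite{Don02}, comparing Chow-type weights of the $\bbc^*$-action on spaces of elements of the multigraded coordinate ring whose Reeb weight lies in a band around $k$ with the $L^2$-norm of $s^T-\bar{s}^T$, via a transverse Bergman-kernel expansion, and then letting $k\to\infty$. Moreover, your fallback of approximating the irregular Reeb field and the test configuration by quasi-regular data and invoking the orbifold theory of \cite{RoTh11} is precisely what \cite{CoSz12} was designed to avoid: the continuity of ${\rm Fut}$ and of the relevant norms under perturbation of $\xi$ is delicate, and their solution is to work directly with the index character with real weights, controlling the lattice-point sums near the irrational hyperplane $\langle\alpha,\xi\rangle=k$. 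So your proposal is a fair high-level reading of the cited theorem, but the ``technical heart'' you flag is exactly where both of your suggested routes diverge from, and would require substantially more justification than, the argument actually carried out in \cite{CoSz12}.
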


Since there is a one to one correspondence between a Sasaki manifold and its polarized affine variety, we can also refer to this as the K-semistability of the corresponding Sasaki manifold. We now have an immediate corollary of Theorems \ref{admjoincsc} and \ref{CoSzcscthem}:

\begin{corollary}\label{stabcor}
Let $M$ be a quasi-regular Sasaki manifold with constant scalar curvature, let $Y_{\bfl,\bfw}$ denote the affine cone of the $S^3_\bfw$ join $M_{\bfl,\bfw}=M\star_\bfl S^3_\bfw$. Then there exists a K-semistable polarization $\xi$ of $Y_{\bfl,\bfw}$, or equivalently a K-semistable Sasakian structure $(\xi,\eta,\Phi,g)$ on $M_{\bfl,\bfw}$.
\end{corollary}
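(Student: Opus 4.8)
The plan is to chain together the two results already in hand: the existence statement Theorem~\ref{admjoincsc} and the stability statement Theorem~\ref{CoSzcscthem}. The corollary asserts nothing beyond what these two yield when combined, so the argument reduces to checking that the output of the first is a legitimate input for the second.

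First I would verify the hypotheses of Theorem~\ref{admjoincsc}. Since $(M,\cals)$ is quasi-regular with constant scalar curvature, the circle action generated by its Reeb field exhibits $M$ as an $S^1$ orbibundle $M\longrightarrow N$ over a compact K\"ahler orbifold $N$, and the transverse K\"ahler structure of $\cals$ descends to a K\"ahler orbifold metric on $N$ whose scalar curvature $s_N$ equals the (constant) transverse scalar curvature of $\cals$. Thus $M$ is precisely an $S^1$ orbibundle over a compact K\"ahler orbifold of constant scalar curvature, exactly as Theorem~\ref{admjoincsc} requires.

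Next, fix any pair $\bfw=(w^0,w^\infty)$ of relatively prime positive integers with $w^0>w^\infty$ and any pair $\bfl=(l^0,l^\infty)$ of relatively prime positive integers. Theorem~\ref{admjoincsc} produces a Reeb vector field $\xi_\bfv$ in the $2$-dimensional $\bfw$-subcone $\gt^+_\bfw\subset\gt^+_{\bfl,\bfw}$ for which the ray $\cals_a=(a^{-1}\xi_\bfv,a\eta_\bfv,\Phi,g_a)$ of Sasakian structures on $M_{\bfl,\bfw}$ has constant scalar curvature; taking $a=1$ gives a cscS Sasakian structure $(\xi_\bfv,\eta_\bfv,\Phi,g)$ on $M_{\bfl,\bfw}$ (which by Remark~\ref{cscrem} may be quasi-regular or irregular, but in either case is a genuine Sasakian structure on the same underlying space). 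I would then feed this into Theorem~\ref{CoSzcscthem}: a cscS Sasaki manifold has K-semistable polarized affine cone, so $(Y_{\bfl,\bfw},\xi_\bfv)$ is K-semistable. The claimed equivalence with K-semistability of the Sasakian structure $(\xi_\bfv,\eta_\bfv,\Phi,g)$ on $M_{\bfl,\bfw}$ is exactly the one-to-one correspondence between a Sasaki manifold and its polarized affine variety noted immediately after Theorem~\ref{CoSzcscthem}; this also makes harmless the fact that the cscS ray may be irregular, since the affine-cone formulation of K-stability is insensitive to the regularity of the Reeb field.

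The only point requiring care --- and I expect it to be the sole obstacle, a minor bookkeeping one --- is that Theorem~\ref{CoSzcscthem} is stated for Sasaki \emph{manifolds}, whereas $M_{\bfl,\bfw}$ as produced above may a priori be an orbifold. To stay safely inside the manifold category I would invoke Lemma~\ref{smoothlem} and restrict to join parameters satisfying $\gcd(l^\infty\Upsilon_\cals,l^0w^0w^\infty)=1$, so that $M_{\bfl,\bfw}$ is a smooth compact Sasaki manifold; such $\bfl$ always exist (for instance any $\bfl$ with $l^\infty=1$). Alternatively, one observes that the test-configuration and central-fiber machinery of Collins--Sz\'ekelyhidi extends verbatim to the finite cyclic quotient singularities occurring here, so that no restriction on $\bfl$ is needed and the statement holds for the orbifold $M_{\bfl,\bfw}$ as written. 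In either case the substantive content of the corollary is carried entirely by Theorem~\ref{admjoincsc}, which has already been established, and nothing genuinely new needs to be proved.
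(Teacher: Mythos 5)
Your argument is exactly the paper's: the authors present Corollary \ref{stabcor} as an immediate consequence of chaining Theorem \ref{admjoincsc} (existence of a cscS Reeb field in $\gt^+_\bfw$) with Theorem \ref{CoSzcscthem} (cscS implies K-semistability of the polarized affine cone), with no further proof given. Your additional care about the manifold-versus-orbifold hypothesis of Theorem \ref{CoSzcscthem} is a sensible refinement the paper leaves implicit, but it does not change the route.
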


In the $\bbq$-Gorenstein case, i.e. $c_1(Y)$ is a torsion class, it is well known that the existence of Sasaki-Einstein metrics of $M$ is equivalent to the existence of Ricci flat K\"ahler metrics on the affine cone $C(M)=M\times\bbr^+$, and it is convenient to include the ``cone point'' and define $Y=C(M)\cup \{0\}$. As before the pair $(Y,\xi)$ is a polarized affine variety, and we can also assume that $\xi$ belongs to the Lie algebra $\gt$ of a maximal torus $\bbt$. However, in this case we have a nowhere vanishing section $\grO$ of the canonical line bundle $\grL^{n+1,0}(C(M))$ that satisfies
$$\pounds_\xi\grO=i(n+1)m\grO$$
for some $m\in\bbz^+$. Collins and Sz\'ekelyhidi  call this pair $(Y,\xi)$ a {\it normalized Fano cone singularity}, and they restrict their test configurations to those with normal central fibers.

\begin{definition}\label{Kstabdef}
A normalized Fano cone singularity $(Y,\xi)$ is called a {\it $\bbt$-equivariantly K-stable Reeb polarization} if for all test configurations with normal central fiber $(Y_0,\xi)$ equation \eqref{Futinv} holds with equality only if $(Y,\xi)$ is isomorphic to $(Y_0,\xi)$.
\end{definition}

This gives the Sasaki version of the famous Chen-Donaldson-Sun theorem.

\begin{theorem}[\cite{CoSz15}]\label{CoSzsethem} 
Let $(M,\cals)$ be a Sasaki manifold with $c_1(\cald)$ a torsion class. Then $(M,\cals)$ admits a Sasaki-Einstein metric if and only if its polarized affine cone $(Y,\xi)$ is $\bbt$-equivariantly K-stable.
\end{theorem}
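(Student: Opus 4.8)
The statement is quoted from Collins--Sz\'ekelyhidi, so what follows is a sketch of the strategy behind it rather than a self-contained argument; it is the Sasaki analogue of the Chen--Donaldson--Sun theorem and relies on \cite{CoSz12,CoSz15}.

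\textbf{The easy direction.} If $(M,\cals)$ carries a Sasaki-Einstein metric then it is in particular cscS, so by Theorem \ref{CoSzcscthem} the polarized cone $(Y,\xi)$ is K-semistable. To promote this to $\bbt$-equivariant K-stability one must rule out equality in \eqref{Futinv} for any test configuration with normal central fiber $(Y_0,\xi)$ that is not isomorphic to $(Y,\xi)$. The plan here is a Matsushima--Lichnerowicz type argument on the cone: the Sasaki-Einstein condition gives a spectral gap for the relevant Laplacian, so equality in the Donaldson--Futaki invariant along a degeneration forces the generator $a$ of the $\bbc^*$-action on $Y_0$ to be, after conjugation, a holomorphic vector field commuting with the Sasaki-Einstein structure; hence the degeneration is a product in the appropriate sense, and normality of $Y_0$ together with the fixed Reeb polarization upgrades ``product'' to ``isomorphic to $(Y,\xi)$'', so strict inequality holds otherwise.

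\textbf{The hard direction.} For the converse one runs the Chen--Donaldson--Sun program transversally, on the affine cone. The plan is: (i) set up a continuity path (an Aubin--Yau type path for the transverse K\"ahler--Ricci problem, equivalently a Ricci-flat K\"ahler cone equation on $Y=C(M)\cup\{0\}$) joining a reference metric to the sought Sasaki-Einstein metric, and show the set of parameters admitting a solution is open via the implicit function theorem; (ii) if closedness fails, extract a pointed Gromov--Hausdorff limit of the cone metrics along a sequence approaching the critical parameter, and use Cheeger--Colding--Tian regularity to identify the limit as a metric cone over a Sasaki space whose singular set has Hausdorff codimension at least four; (iii) establish the partial $C^0$ estimate (Donaldson--Sun) in the cone setting, so that sections of high powers of the polarizing bundle give a uniform embedding and the limit is realized as a normal affine variety $Y_\infty$ carrying a Reeb field $\xi$; (iv) show that $Y_\infty$ arises as the central fiber of a $\bbt^\bbc$-equivariant test configuration for $(Y,\xi)$ whose Donaldson--Futaki invariant is $\le 0$; (v) invoke $\bbt$-equivariant K-stability to conclude $Y_\infty\cong(Y,\xi)$, which contradicts the assumed breakdown, forcing closedness and hence the existence of the Sasaki-Einstein metric.

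\textbf{Main obstacle.} The crux is steps (iii)--(iv): the partial $C^0$ estimate and, more delicately, the algebraicity and convergence statement — that Gromov--Hausdorff limits of Ricci-flat K\"ahler cone metrics with the relevant bounded geometry are normal affine varieties, and that the convergence is compatible enough with the $\bbt^\bbc$-action to produce a genuine test configuration with controlled Donaldson--Futaki invariant. One must also handle irregular Reeb vector fields, where there is no quasi-regular K\"ahler orbifold quotient to reduce to, so the entire analysis has to be carried out directly on the symplectization $C(M)=M\times\bbr^+$. This is precisely the technical content of \cite{CoSz15}.
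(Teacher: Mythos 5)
The paper offers no proof of Theorem \ref{CoSzsethem}: it is imported verbatim from Collins--Sz\'ekelyhidi \cite{CoSz15} as the Sasaki analogue of the Chen--Donaldson--Sun theorem, so there is nothing internal to compare your argument against. Your sketch is a fair account of the strategy actually carried out in \cite{CoSz15} (continuity path, Gromov--Hausdorff limits, partial $C^0$ estimate, production of a destabilizing test configuration with normal central fiber), with the one caveat that the ``easy'' direction in the Einstein case is not a pure Matsushima--Lichnerowicz argument but a Berman-type argument on the Ding/Futaki functional adapted to the cone; as you correctly flag, all of this is the technical content of the cited reference rather than of the present paper.
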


Combining this theorem with Theorem \ref{admjoinse} gives

\begin{corollary}\label{joinstable}
The $S^3_\bfw$-join, $M_{\bfl,\bfw}=M\star_{\bfl}S^3_\bfw$, described in Theorem \ref{admjoinse} admits a $\bbt$-equivariantly K-stable Reeb polarization.
\end{corollary}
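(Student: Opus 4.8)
The plan is to chain together the two results already in hand: Theorem \ref{admjoinse}, which produces a Sasaki--Einstein metric on $M_{\bfl,\bfw}=M\star_\bfl S^3_\bfw$ under the stated hypotheses (namely $M$ is an $S^1$ orbibundle over a compact positive K\"ahler--Einstein orbifold $N$ with primitive orbifold K\"ahler class, and $(l^0,l^\infty)$ are the relative Fano indices), and Theorem \ref{CoSzsethem} (Collins--Sz\'ekelyhidi), which in the torsion-$c_1(\cald)$ setting characterizes the existence of a Sasaki--Einstein metric as equivalent to $\bbt$-equivariant K-stability of the polarized affine cone $(Y,\xi)$. So the proof is essentially: verify that the hypotheses of Theorem \ref{admjoinse} place us in the situation covered by Theorem \ref{CoSzsethem}, then invoke both.

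Concretely, the first step is to check that the contact structure $\cald_{\bfl,\bfw}$ on $M_{\bfl,\bfw}$ appearing in Theorem \ref{admjoinse} has $c_1(\cald_{\bfl,\bfw})=0$ (or at least a torsion class), so that $(Y_{\bfl,\bfw},\xi_\bfv)$ is a normalized Fano cone singularity in the sense of Definition \ref{Kstabdef}; this is exactly the Gorenstein condition, and by the remark following Theorem \ref{almostreg} this Gorenstein condition is equivalent to the hypotheses at the start of Theorem \ref{admjoinse}, so it holds automatically. The second step is to note that Theorem \ref{admjoinse} gives a Reeb vector field $\xi_\bfv$ in the $\bfw$-cone $\gt^+_\bfw$ for which $\cals_\bfv=(\xi_\bfv,\eta_\bfv,\Phi,g)$ is Sasaki--Einstein, and $\xi_\bfv$ lies in the Lie algebra $\gt$ of a maximal torus by construction of the $\bfw$-cone. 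The third step is to apply Theorem \ref{CoSzsethem} to the Sasaki manifold $(M_{\bfl,\bfw},\cals_\bfv)$: since it admits a Sasaki--Einstein metric and $c_1(\cald_{\bfl,\bfw})$ is torsion, its polarized affine cone $(Y_{\bfl,\bfw},\xi_\bfv)$ is $\bbt$-equivariantly K-stable, i.e.\ $\xi_\bfv$ is a $\bbt$-equivariantly K-stable Reeb polarization. This is precisely the asserted conclusion.

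The only genuine subtlety — and the point I would spell out carefully rather than wave through — is the smoothness/orbifold bookkeeping: Theorem \ref{CoSzsethem} is stated for Sasaki \emph{manifolds}, whereas $M_{\bfl,\bfw}$ may a priori only be an orbifold (Lemma \ref{smoothlem} gives the precise smoothness criterion \eqref{adcond}, restated in the Gorenstein case as \eqref{orderjoin}). If one wants the statement at the level of smooth manifolds one restricts to $\bfw$ and $M$ satisfying $\gcd(\Upsilon_\cals,l^0w^0w^\infty)=1$; otherwise one must appeal to the orbifold version of the Collins--Sz\'ekelyhidi theorem, which holds by the same arguments since everything in \cite{CoSz15} is set up on the affine cone $Y$ and the quasi-regular Sasaki--Einstein structure corresponds to a Ricci-flat K\"ahler cone metric regardless of whether the link is smooth. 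I expect this orbifold-versus-manifold point to be the main thing a referee would want clarified; the rest is a direct citation of Theorems \ref{admjoinse} and \ref{CoSzsethem}.

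Thus the proof I would write is short: \emph{By Theorem \ref{admjoinse} the join $M_{\bfl,\bfw}=M\star_\bfl S^3_\bfw$ carries a Sasaki--Einstein metric with Reeb field $\xi_\bfv\in\gt^+_\bfw\subset\gt$. Since under these hypotheses $c_1(\cald_{\bfl,\bfw})=0$, the polarized affine cone $(Y_{\bfl,\bfw},\xi_\bfv)$ is a normalized Fano cone singularity, and Theorem \ref{CoSzsethem} applies to give that $(Y_{\bfl,\bfw},\xi_\bfv)$ is $\bbt$-equivariantly K-stable, i.e.\ $\xi_\bfv$ is a $\bbt$-equivariantly K-stable Reeb polarization.}
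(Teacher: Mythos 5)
Your proposal is correct and is essentially the paper's own argument: the corollary is obtained precisely by combining Theorem \ref{admjoinse} (existence of the Sasaki--Einstein metric on the join, with the Gorenstein condition $c_1(\cald_{\bfl,\bfw})=0$ built into its hypotheses) with the Collins--Sz\'ekelyhidi equivalence of Theorem \ref{CoSzsethem}. Your extra remarks on the orbifold-versus-manifold bookkeeping are sensible caution but do not change the route.
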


\begin{remark}\label{QGor}
By relaxing the primitivity assumption in Theorem \ref{admjoinse} this corollary easily generalizes to the $\bbq$-Gorenstein case with $c_1(\cald)$ a torsion class. 
\end{remark}

\begin{remark}
In both Corollaries \ref{stabcor} and \ref{joinstable} the K-(semi)stable Sasakian structures on $M_{\bfl,\bfw}$ could be irregular.
\end{remark}

\section{Applications}
It is the purpose of this section to apply our main Theorems \ref{admjoincsc} and \ref{admjoinse} to give explicit examples of cone decomposable Sasakian structures that admit constant scalar curvature or S-$\eta$-E metrics. As shown by Corollary \ref{5cor} cone decomposability is well understood in dimension 5, so we concentrate here on dimension 7 and higher. We shall only give explicit examples in dimension 7, since the complexity increases greatly with dimension.

\subsection{Toric Examples}
Recall that toric contact structures of Reeb type have families of Sasakian structures \cite{BG00b,Ler02a}. It is easy to see that the $S^3_\bfw$ join of a toric contact structure of Reeb type is a toric contact structure of Reeb type. In dimension 5 all toric contact structures of Reeb type on $S^3$ bundles over $S^2$ can be realized as joins and have been studied in \cite{Leg10,BoPa10}. Examples in dimension 7 can easily be constructed for example $Y^{p,q}\star_\bfl S^3_\bfw$ as in \cite{BoTo18c}. This is a special case of the iterated $S^3$ joins which are treated in \cite{BoTo19b}.

\subsection{Complexity One Examples in Dimension $7$}\label{compl1sect}
Here we consider the $S^3_\bfw$ join with a complexity one Sasaki 5-manifold recently studied in \cite{CoSz15}. It is the link $L_{p,q}$ of the Brieskorn varieties 
\begin{equation}\label{Brieslink}
f_{p,q}=z_0^p+z_1^q+z_2^2+z_3^2=0
\end{equation}
of degree $d=2pq$ with its naturally induced Sasakian structure with Reeb vector field $\xi_0$. We denote this join by $M_{p,q,\bfl,\bfw}$. Topologically the 5-manifolds $L_{p,q}$ are precisely the $k$-fold connected sums $k(S^2\times S^3)$ with $k=\gcd(p,q)-1$ and $k=0$ means the five sphere $S^5$. So from Proposition \ref{elemtopprop} we see that the 7-manifold  $M_{p,q,\bfl,\bfw}$ is simply connected with $\pi_2(M_{p,q,\bfw})=H_2(M_{p,q,\bfl,\bfw},\bbz)=\bbz^k$.

The weight vector of the naturally induced Reeb vector field $\xi_0$ is $(2q,2p,pq,pq)$. The Sasaki cone $\gt^+(L_{p,q})$ of the link $L_{p,q}$ is 2 dimensional and was computed in \cite{CoSz15,BovCo16} using the Hilbert series of the coordinate ring on the polarized affine variety $(Y_{p,q},\xi_0)=(\bbc[z_0,z_1,z_2,z_3]/(f_{p,q}),\xi_0)$. It is given by 
\begin{equation}\label{tanform}
\gt^+(L_{p,q})=\{\xi=a\xi_0+ b\grz~|~a>0,~ -\frac{ad}{2}<b<\frac{ad}{2}\}
\end{equation}
where $\grz\in\gt$ has weight $(1,-1)$ with respect to $(u=z_2+iz_3,v=z_2-iz_3)$ and $0$ elsewhere. 
In all cases there is a unique ray  that minimizes the volume on the planar slice fixed by $a$ which corresponds to $b=0$. Moreover, a Theorem of Collins and Sz\'ekelyhidi \cite{CoSz15} says that this ray is cscS, hence S-$\eta$-E, if and only if $2p>q$ and $2q>p$, or the quadric\footnote{We exclude the case $p=q=2$ in what follows. This gives the homogeneous Einstein metric on $S^2\times S^3$, hence, it is regular and thus treated in \cite{BoTo14a}.} $p=q=2$. The Fano index $\cali_{p,q}$ of this ray is 
\begin{equation}\label{pqFanoind}
\cali_{p,q}=2q+2p+pq+pq-d=2(p+q).
\end{equation}
Thus, as in Equation (9) of \cite{BoTo14a} we have
\begin{lemma}\label{c1orbNpq}
There is a generator $\grg\in H^2(M_{p,q,\bfl,\bfw},\bbz)$ such that 
$$c_1(\cald_{p,q,\bfl,\bfw})=(2l^\infty(p+q)-l^0|\bfw|)\grg.$$
Moreover, the second Stiefel-Whitney class is $w_2(M_{p,q,\bfl,\bfw})=l^0|\bfw|\grg\mod 2$.
\end{lemma}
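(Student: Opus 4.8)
The plan is to compute $c_1(\cald_{p,q,\bfl,\bfw})$ directly from the join construction in Section~2, following the pattern of Equation~(9) of \cite{BoTo14a}. First I would recall that for the join $M_{p,q,\bfl,\bfw}=L_{p,q}\star_\bfl S^3_\bfw$ the contact bundle splits as $\cald_{p,q,\bfl,\bfw}=\cald_1\oplus\cald_2$ (see \eqref{tbunsplit}), so that $c_1(\cald_{p,q,\bfl,\bfw})$ is the sum of the pullbacks of $c_1$ of the transverse holomorphic structures on the two factors $L_{p,q}$ and $S^3_\bfw$. Concretely, $\pi_L^*c_1(\cald_{p,q,\bfl,\bfw})=\pi_{\bbt^2}^*\bigl(pr_1^*c_1^{orb}(L_{p,q}/N_{p,q})+pr_2^*c_1^{orb}(S^3_\bfw/\bbc\bbp^1[\bfw])\bigr)$, where $N_{p,q}$ is the quasi-regular quotient orbifold of $L_{p,q}$ by $\xi_0$. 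Using the Fano-index computation \eqref{pqFanoind}, namely $c_1^{orb}(N_{p,q})=\cali_{p,q}[\gro_{N_{p,q}}]=2(p+q)[\gro_{N_{p,q}}]$, together with $c_1^{orb}(S^3_\bfw/\bbc\bbp^1[\bfw])=|\bfw|[\gro_\bfw]$, I would then transport these classes to $M_{p,q,\bfl,\bfw}$.

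The key step is the identification, exactly as in Lemma~\ref{Kahclass} and the diagram \eqref{t3comdia}: there is a generator $\grg\in H^2(M_{p,q,\bfl,\bfw},\bbz)$ with $(pr_1\circ p_\bfw)^*[\gro_{N_{p,q}}]=l^\infty\grg$ and $(pr_2\circ p_\bfw)^*[\gro_\bfw]=-l^0\grg$. This uses that $c_1^{orb}(L_{p,q}/N_{p,q})=[\gro_{N_{p,q}}]$ is primitive (the assumption running through the paper) and that the Gysin/Leray–Hirsch analysis of \eqref{S3joineqn} identifies $H^2$ of the join; the sign and the coefficients $l^\infty,-l^0$ come from the weights in the $S^1$-action \eqref{S1action} and the Reeb vector \eqref{Reebjoin}. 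Substituting, $c_1(\cald_{p,q,\bfl,\bfw})=2(p+q)\,l^\infty\grg+|\bfw|\,(-l^0)\grg=(2l^\infty(p+q)-l^0|\bfw|)\grg$, which is the claimed formula. The fact that $\grg$ is a genuine integral generator, not merely a rational class, is where primitivity of $[\gro_{N_{p,q}}]$ (and the smoothness condition \eqref{adcond} guaranteeing $M_{p,q,\bfl,\bfw}$ is a manifold with $H^2(M_{p,q,\bfl,\bfw},\bbz)$ torsion-free in this rank-one case by Proposition~\ref{elemtopprop}) is essential.

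For the second assertion, $w_2(M_{p,q,\bfl,\bfw})=l^0|\bfw|\grg\bmod 2$, I would use the standard fact that for a Sasaki manifold $w_2$ is the mod-$2$ reduction of $c_1$ of the tangent bundle, and that $TM_{p,q,\bfl,\bfw}=\cald_{p,q,\bfl,\bfw}\oplus L_\xi$ with $L_\xi$ trivial, so $w_2(M_{p,q,\bfl,\bfw})$ is the mod-$2$ reduction of $c_1(\cald_{p,q,\bfl,\bfw})$ \emph{plus} the contribution of the canonical class of the transverse structure — more precisely, one compares $c_1(\cald_{p,q,\bfl,\bfw})$ with $c_1^{orb}$ of the quotient and reduces mod $2$. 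Reducing $2l^\infty(p+q)-l^0|\bfw|$ modulo $2$ kills the first term and leaves $-l^0|\bfw|\equiv l^0|\bfw|\pmod 2$, giving $w_2(M_{p,q,\bfl,\bfw})=l^0|\bfw|\grg\bmod 2$.

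The main obstacle I expect is bookkeeping the integral (as opposed to rational) structure carefully: verifying that $\grg$ generates a $\bbz$-summand of $H^2(M_{p,q,\bfl,\bfw},\bbz)$ so that the coefficient $2l^\infty(p+q)-l^0|\bfw|$ is well-defined and not just defined up to the order of some torsion, and making sure the primitivity of $[\gro_{N_{p,q}}]$ together with the smoothness condition \eqref{adcond} really does force this. Everything else is a direct transcription of the corresponding computation in \cite{BoTo14a}, with $\cali_{p,q}=2(p+q)$ in place of the regular Fano index there.
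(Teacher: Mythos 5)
Your argument is correct and is essentially the computation the paper intends: the paper gives no proof of this lemma, merely citing Equation (9) of \cite{BoTo14a}, and you have reconstructed that computation via the splitting of $\cald_{p,q,\bfl,\bfw}$, the identifications $(pr_1\circ p_\bfw)^*[\gro_N]=l^\infty\grg$ and $(pr_2\circ p_\bfw)^*[\gro_\bfw]=-l^0\grg$ from Lemma \ref{Kahclass}, and the index $\cali_{p,q}=2(p+q)$ from \eqref{pqFanoind}. One small cleanup: for the second claim there is no extra ``contribution of the canonical class'' to track --- since $TM_{p,q,\bfl,\bfw}=\cald_{p,q,\bfl,\bfw}\oplus L_\xi$ with $L_\xi$ trivial and $\cald_{p,q,\bfl,\bfw}$ a complex bundle, one has directly $w_2(M_{p,q,\bfl,\bfw})\equiv c_1(\cald_{p,q,\bfl,\bfw})\pmod 2$, and reducing $2l^\infty(p+q)-l^0|\bfw|$ mod $2$ gives the stated formula.
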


\begin{remark}\label{typechangerem}
For those contact structures $\cald_{p,q,\bfl,\bfw}$ such that $2(p+q)l^\infty>l^0w^0$ the entire $\bfw$ Sasaki cone $\gt^+_\bfw$ consists of positive Sasakian structures. When this inequality does not hold type changing occurs. See Proposition 4.2 of \cite{BoTo15}.
\end{remark}
 
 We can now apply Theorem \ref{admjoincsc} to give a cscS metric in the $\bfw$ Sasaki cone $\gt^+_\bfw$ of the join $M_{p,q,\bfl,\bfw}$. But first we construct the $S^1$ quotient $N$ of $L_{p,q}$ as a log pair. 

\begin{lemma}\label{quotorb5}
Let $k=\gcd(p,q)-1$, then the quotient orbifold $(N,\grD)$ generated by the Reeb vector field $\xi_0$ is $(N,\grD_{p,q})$ where $N$ is the weighted projective hypersurface  embedded in $\bbc\bbp^3[2,2,k+1,k+1]$ as $z_0^{k+1}+z_1^{k+1}+z_2^2+z_3^2=0$ and $\grD_{p,q}$ is the branch divisor
$$\grD_{p,q}=(1-\frac{1}{p})(z_0=0) +(1-\frac{1}{q})(z_1=0).$$
Moreover, there are algebraic quotient singularities with isotropy $\bbz_{k+1}$ occuring at the intersection of the irreducible components of $\grD_{p,q}$, namely the two points $[0,0,1,\pm i]\in N$, and $\bbz_2$ quotient singularities at the points at the $k+1$ points $[1,e^{\frac{i\pi (2m+1)}{k+1}},0,0]$, $m=0,\ldots,k$.
\end{lemma}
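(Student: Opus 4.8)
The plan is to recognise $L_{p,q}$ as an $S^1$-Seifert orbibundle over its quotient complex orbifold and then to identify that orbifold explicitly, using the quasi-homogeneity of $f_{p,q}$ of degree $d=2pq$ with weight vector $(2q,2p,pq,pq)$. By the standard correspondence between links of quasi-homogeneous hypersurface singularities and Seifert $S^1$-orbibundles (Orlik--Wagreich; Boyer--Galicki, Chapters 4 and 9 of \cite{BG05}), the quotient $L_{p,q}/S^1$ by the circle generated by $\xi_0$ is, as a complex space, the weighted projective hypersurface $\{z_0^p+z_1^q+z_2^2+z_3^2=0\}\subset\bbc\bbp^3[2q,2p,pq,pq]$, its orbifold structure being dictated by the isotropy of the weighted circle action along the coordinate strata. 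So the lemma splits into identifying the underlying variety and reading off the branch divisor together with the algebraic quotient singularities.

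For the variety: since $\gcd(p,q)=k+1$, write $p=(k+1)\hat p$, $q=(k+1)\hat q$ with $\gcd(\hat p,\hat q)=1$; then $\bbc\bbp^3[2q,2p,pq,pq]$ is not well formed. I would make the torus-equivariant substitution $z_0\mapsto z_0^{\hat p}$, $z_1\mapsto z_1^{\hat q}$ (fixing $z_2,z_3$), which carries $z_0^p+z_1^q$ to $z_0^{k+1}+z_1^{k+1}$ and, after cancelling the common factors that now appear among the weights, identifies the ambient space (as a variety) with $\bbc\bbp^3[2,2,k+1,k+1]$ and the hypersurface with $N=\{z_0^{k+1}+z_1^{k+1}+z_2^2+z_3^2=0\}$. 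Equivalently and more concretely, $(z_0,z_1,z_2,z_3)\mapsto[z_0^{\hat p}:z_1^{\hat q}:z_2:z_3]$ is an $S^1$-invariant holomorphic map $L_{p,q}\to N\subset\bbc\bbp^3[2,2,k+1,k+1]$; since two points of $L_{p,q}$ in the same fibre differ by an element of the \emph{unit} circle (one forces $|\lambda|=1$ from $\sum|\lambda|^{2w_i}|z_i|^2=\sum|z_i|^2$ on $S^7$), a dimension count plus normality of $L_{p,q}/S^1$ and of $N$ shows the induced map $L_{p,q}/S^1\to N$ is a biholomorphism.

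For the orbifold structure it remains to compute the local uniformizing groups, i.e.\ the $S^1$-isotropy subgroups of $L_{p,q}$ along the coordinate loci, and to compare them with the generic isotropy. Along $\{z_0=0\}$ and $\{z_1=0\}$ one extracts the ramification indices $p$ and $q$, giving $\grD_{p,q}=(1-\tfrac{1}{p})(z_0=0)+(1-\tfrac{1}{q})(z_1=0)$, while $\{z_2=0\}$ and $\{z_3=0\}$ carry no extra isotropy and so contribute nothing. The two coordinate lines $\{z_0=z_1=0\}$ and $\{z_2=z_3=0\}$ of $\bbc\bbp^3[2,2,k+1,k+1]$ have transverse isotropy $\bbz_{k+1}$ and $\bbz_2$ respectively (immediate from the weights $(2,2,k+1,k+1)$); intersecting with $N$ pins the first down to the two points with $z_2^2+z_3^2=0$, namely $[0:0:1:\pm i]$, and the second to the $k+1$ points with $z_0^{k+1}+z_1^{k+1}=0$, namely $[1:e^{i\pi(2m+1)/(k+1)}:0:0]$, $m=0,\dots,k$. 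Combining this with the previous step gives the statement.

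The genuinely delicate step is the last one. Because the presenting weight vector $(2q,2p,pq,pq)$ is far from primitive once $k\ge1$, one must carefully separate, over each coordinate stratum, the part of the $S^1$-isotropy that is absorbed into the orbibundle (and so is invisible on the base) from the part that survives as a branch-divisor index or as an algebraic quotient singularity. This is precisely the gcd bookkeeping that has to be tracked consistently through the well-formedness reduction of the previous step, and it is where a careless computation can produce the wrong ramification indices or miscount the singular points.
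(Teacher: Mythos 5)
Your overall route---realise $L_{p,q}/S^1$ as the hypersurface $\{f_{p,q}=0\}$ in the non-well-formed space $\bbc\bbp^3[2q,2p,pq,pq]$ and then reduce to $\bbc\bbp^3[2,2,k+1,k+1]$---is the same as the paper's, which carries out the reduction by splitting into the three parity cases for $(p,q)$, invoking Delorme's isomorphism (Definition 4.5.7 and Proposition 4.5.10 of \cite{BG05}) and Lemma 5.4.10 there to identify the hypersurface, and then reading the algebraic singularities off the ambient space exactly as you do. Your explicit substitution $z_0\mapsto z_0^{\hat p}$, $z_1\mapsto z_1^{\hat q}$ is a reasonable hands-on replacement for the Delorme citation, though your argument that the induced map $L_{p,q}/S^1\to N$ is a biholomorphism is only sketched: the unit-circle observation shows each fibre of $L_{p,q}\to N$ is a \emph{union} of Reeb orbits, not a single orbit, and verifying the latter on the strata where $z_0$ or $z_1$ vanishes (where $\hat p$-th and $\hat q$-th roots must be extracted) is precisely what Delorme's theorem packages. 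That part is repairable.

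The genuine gap is the step you yourself single out as delicate and then do not perform: the computation of the ramification indices. You assert that the indices along $(z_0=0)$ and $(z_1=0)$ are $p$ and $q$, but the procedure you describe---take the $S^1$-isotropy at a generic point of the stratum and divide out the generic stabiliser---yields $p/(k+1)$ and $q/(k+1)$. For instance, with $p,q$ odd the stabiliser of a point of $L_{p,q}$ with only $z_0=0$ is $\bbz_{\gcd(2p,pq)}=\bbz_p$, while the kernel of the action on all of $L_{p,q}$ is $\bbz_{\gcd(2q,2p,pq)}=\bbz_{k+1}$, so the leaf holonomy there has order $p/(k+1)$; concretely, for $p=q=3$ the effective weights are $(2,2,3,3)$ and the stabiliser of a point with only $z_0=0$ is $\bbz_{\gcd(2,3,3)}$, which is trivial, so that stratum is not a branch divisor at all. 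This agrees with the stated $\grD_{p,q}$ only when $\gcd(p,q)=1$, i.e.\ $k=0$ (which is the case used downstream in the paper). So either you must supply the argument reconciling your bookkeeping with the asserted indices $p,q$, or the bookkeeping you defer genuinely changes the answer for $k\ge 1$; in either case the branch-divisor statement---the one part of the lemma that the paper's own terse proof also leaves unaddressed---is not established by the proposal. The identification of the $\bbz_{k+1}$ and $\bbz_2$ singular points from the ambient weights is fine and coincides with the paper's argument.
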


\begin{proof}
There are three cases to consider, $p,q$ both odd, $p,q$ both even, and $p,q$ have different parity. In the last two cases, the weights have a common factor of $4$ and $2$, respectively which should be factored out of the weights and the degree. Once this is done the proof uses standard techniques described in Chapter 4 of \cite{BG05}, in particular Lemma 5.4.10. First we define a new weight vector $\bar{\bfw}$ as described in Definition 4.5.7 of \cite{BG05} which by a result of Delorme (Proposition 4.5.10 in \cite{BG05}) gives an isomorphism $\bbc\bbp^3[\bfw]\approx \bbc\bbp^3[\bar{\bfw}]$ as algebraic varieties. For example if $p$ and $q$ are both odd we have from Definition 4.5.7 that  
$$\bar{\bfw}=(2,2,\gcd(p,q),\gcd(p,q))=(2,2,k+1,k+1)$$
and then Lemma 5.4.10 determines the hypersurface. The other two cases are similar once the factorization described above is done. The second statement then follows since the hypersurface inherits the algebraic quotient singularities of $\bbc\bbp^3[2,2,k+1,k+1]$.
\end{proof}

This gives
 \begin{theorem}\label{pqw7man}
 If $2p>q$ and $2q>p$ then for each of the relatively prime pairs $(l^0,l^\infty)$ and $(w^0,w^\infty)$ there exists a ray of constant scalar curvature Sasakian structures in $\gt^+_\bfw$ on the 7 dimensional Sasaki orbifolds $M_{p,q,\bfl,\bfw}$, and the entire $\bfw$ cone $\gt^+_\bfw$ is exhausted by extremal Sasaki metrics. Moreover, if $l^\infty$ is large enough there are at least 3 cscS metrics in $\gt^+_\bfw$. $M_{p,q,\bfl,\bfw}$ is a smooth manifold if and only if $\gcd(2l^\infty pq,l^0w^0w^\infty)=1$.
 \end{theorem}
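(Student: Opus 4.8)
The plan is to deduce this from the general machinery already set up, essentially by checking hypotheses. First I would verify that $L_{p,q}$ is a quasi-regular Sasaki manifold whose $S^1$ quotient is a compact K\"ahler orbifold $(N,\grD_{p,q})$ of the type demanded by Theorem \ref{admjoincsc}: this is exactly the content of Lemma \ref{quotorb5}, which identifies $(N,\grD_{p,q})$ as a weighted projective hypersurface with cyclic quotient singularities. The remaining input Theorem \ref{admjoincsc} requires is that $N$ carry a K\"ahler orbifold metric of \emph{constant scalar curvature} $s_N$, and that one control the sign of $s_N$. Here I would invoke the theorem of Collins--Sz\'ekelyhidi quoted above: the ray $b=0$ in $\gt^+(L_{p,q})$ is cscS (indeed S-$\eta$-E) precisely when $2p>q$ and $2q>p$ (the case $p=q=2$ being excluded as regular). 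Since that cscS Sasakian structure on $L_{p,q}$ along the $\xi_0$-ray is quasi-regular, its transverse K\"ahler metric descends to a constant scalar curvature K\"ahler orbifold metric on $(N,\grD_{p,q})$; moreover, because the ray is actually S-$\eta$-E, the orbifold $(N,\grD_{p,q})$ is a positive (Fano) K\"ahler--Einstein orbifold, so $s_N>0$.

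With $s_N>0$ in hand, I would apply Theorem \ref{admjoincsc} directly to $M = L_{p,q}$ and $M_{\bfl,\bfw} = M_{p,q,\bfl,\bfw} = L_{p,q}\star_\bfl S^3_\bfw$. For each pair of relatively prime positive integers $(w^0,w^\infty)$ with $w^0>w^\infty$ (the ordering being the harmless normalization used throughout), Theorem \ref{admjoincsc} produces a Reeb vector field $\xi_\bfv$ in the $2$-dimensional $\bfw$-Sasaki cone $\gt^+_\bfw$ whose ray carries a constant scalar curvature Sasaki metric; since $s_N>0\geq 0$, the same theorem asserts that all of $\gt^+_\bfw$ is exhausted by extremal Sasaki metrics; and since $s_N>0$, for $l^\infty$ sufficiently large the $\bfw$-cone contains at least $3$ cscS rays. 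This gives the first three assertions verbatim. (One should note that the ordering $w^0>w^\infty$ versus $w^0<w^\infty$ is immaterial here because of the involution $\gri^\perp$ interchanging $S^3_\bfw$ with $S^3_{\bfw^\perp}$, so "each relatively prime pair" is legitimate.)

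For the smoothness statement I would apply Lemma \ref{smoothlem}: $M_{\bfl,\bfw}$ is smooth iff $\gcd(l^\infty\Upsilon_\cals, l^0 w^0 w^\infty)=1$, where $\Upsilon_\cals=\Upsilon_N$ is the order of the quotient orbifold $N$. So the remaining task is to compute $\Upsilon_N$ for $(N,\grD_{p,q})$ and check $\Upsilon_N = pq$, which would turn the smoothness criterion into $\gcd(2l^\infty pq, l^0 w^0 w^\infty)=1$ as claimed — noting the extra factor $2$ comes from the order $d=2pq$ of $\xi_0$ on $L_{p,q}$, i.e. from the $\bbz_2$-part rather than $\Upsilon_N$ alone. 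The order of $(N,\grD_{p,q})$ is the least common multiple of the orders of the local uniformizing groups: along $(z_0=0)$ this is $p$, along $(z_1=0)$ this is $q$, the algebraic quotient singularities contribute $k+1=\gcd(p,q)$ and $2$, and $\lcm(p,q,\gcd(p,q),2)$ together with the residual $\bbz_2$ of the degree yields the factor $2pq$ after one checks the parity bookkeeping (the three cases $p,q$ both odd, both even, mixed parity, exactly as in the proof of Lemma \ref{quotorb5}).

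\emph{Main obstacle.} The geometric heavy lifting — existence of the cscS/KE metric on $N$ and the exhaustion/$\geq 3$-rays conclusions — is entirely outsourced to the Collins--Sz\'ekelyhidi theorem and to Theorem \ref{admjoincsc}, so those are not where the work lies. The only genuinely delicate point is the order computation: correctly tracking how the common factors ($4$, $2$, or $1$) that were divided out of the weights of $\bbc\bbp^3[2,2,k+1,k+1]$ in Lemma \ref{quotorb5} interact with the ramification indices $p,q$ and the degree $d=2pq$, so that one arrives at exactly $\gcd(2l^\infty pq, l^0 w^0 w^\infty)=1$ and not at some spurious extra power of $2$. I would do this parity case-by-case, mirroring the structure of the proof of Lemma \ref{quotorb5}, and that is the step most likely to need care.
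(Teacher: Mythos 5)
Your proposal is correct and assembles exactly the ingredients the paper itself uses: the theorem is stated in the paper as an immediate consequence (``This gives'') of Theorem \ref{admjoincsc} applied with base orbifold $(N,\grD_{p,q})$ from Lemma \ref{quotorb5}, the Collins--Sz\'ekelyhidi criterion guaranteeing the positive K\"ahler--Einstein (hence cscK with $s_N>0$) metric on that quotient precisely when $2p>q$ and $2q>p$, and Lemma \ref{smoothlem} for smoothness. The one point where your write-up wobbles --- whether the factor of $2$ sits in $\Upsilon_N$ or elsewhere --- is harmless: the order is $\Upsilon_N=\lcm(2,p,q)$ (the $2$ coming from the $\bbz_2$ quotient singularities of Lemma \ref{quotorb5}), and since a $\gcd=1$ condition depends only on prime support, $\gcd(l^\infty\Upsilon_N,l^0w^0w^\infty)=1$ is equivalent to $\gcd(2l^\infty pq,l^0w^0w^\infty)=1$ with no parity bookkeeping needed.
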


\begin{remark}\label{spinrem}
It is interesting to note that the last statement of the Theorem together with Lemma \ref{c1orbNpq} imply that if $M_{p,q,\bfl,\bfw}$ is smooth it must be a spin manifold. All the non-spin cases are orbifolds.
\end{remark}

Excluding the homogeneous quadric $p=q=2$, Collins and Sz\'ekelyhidi \cite{CoSz15} prove that the link $L_{p,q}$ has an SE metric if and only if $2p>q$ and $2q>p$. 
The slice that gives the possible SE metric is determined by $c_1(\calf_\xi)=4[d\eta]_B$.

Applying Theorem \ref{admjoinse} gives SE metrics on the $S^3_\bfw$ joins $M_{p,q,\bfl,\bfw}=L_{p,q}\star_\bfl S^3_\bfw$ where the components of $\bfl$ are the relative Fano indices. Since these indices are fixed we relabel the join as $M_{p,q,\bfw}$. We have from Theorem \ref{admjoinse} and Theorem 1.2 of \cite{CoSz15}

\begin{theorem}\label{comp1join}
If $2q>p$ and $2p>q$ there is an SE metric of complexity one  in the $\bfw$ Sasaki cone $\gt^+_\bfw$ of the join 
$$M_{p,q,\bfw}=k(S^2\times S^3)\star_\bfl S^3_\bfw$$
where 
$$l^0=\frac{2(p+q)}{\gcd(2(p+q),|\bfw|)}, \qquad l^\infty=\frac{|\bfw|}{\gcd(2(p+q),|\bfw|)}.$$

\end{theorem}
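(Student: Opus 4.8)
The plan is to recognize $M_{p,q,\bfw}=L_{p,q}\star_\bfl S^3_\bfw$ as an instance of the join covered by Theorem \ref{admjoinse}, taking the quasi-regular base $M$ to be the link $L_{p,q}$ with transverse Kähler--Einstein orbifold the log pair $(N,\grD_{p,q})$ of Lemma \ref{quotorb5}. Once this is in place, the existence of a Sasaki--Einstein Reeb field $\xi_\bfv$ in the two-dimensional $\bfw$-Sasaki cone $\gt^+_\bfw$ is exactly the output of Theorem \ref{admjoinse}, and only the assertion that this structure has complexity one requires an additional (short) argument.

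First I would verify that $(N,\grD_{p,q})$ is a compact positive (Fano) Kähler--Einstein orbifold. This is where Theorem 1.2 of \cite{CoSz15} enters: under the hypothesis $2p>q$, $2q>p$ the volume-minimizing ray of the cone $\gt^+(L_{p,q})$ in \eqref{tanform}, which is the $b=0$ ray spanned by $\xi_0$, carries a quasi-regular Sasaki--Einstein metric, and by Lemma \ref{quotorb5} the corresponding transverse Kähler metric is a Kähler--Einstein metric on $(N,\grD_{p,q})$; in particular $(N,\grD_{p,q})$ is Fano. The degenerate quadric $p=q=2$ makes $L_{p,q}$ regular and is subsumed by \cite{BoTo14a}, so it needs no separate discussion.

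Next I would carry out the arithmetic of the relative Fano indices. By \eqref{pqFanoind} the orbifold Fano index of $(N,\grD_{p,q})$ with respect to its primitive orbifold Kähler class is $\cali_N = 2q+2p+pq+pq-2pq = 2(p+q)$, and $L_{p,q}$ is the $S^1$-orbibundle over $(N,\grD_{p,q})$ realizing that primitive class, the orbifold first Chern class bookkeeping being parallel to that done for the $Y^{p,q}$ family in Example \ref{c1join}. Substituting $\cali_N = 2(p+q)$ and $w^0+w^\infty=|\bfw|$ into the relative-Fano-index formulas of Theorem \ref{admjoinse} reproduces precisely the $l^0$ and $l^\infty$ in the statement, so (after relabelling by $\gri^\perp_\calg$ if necessary to arrange $w^0>w^\infty$) $M_{p,q,\bfw}$ is exactly the join to which Theorem \ref{admjoinse} applies, and that theorem produces the sought-after Sasaki--Einstein $\xi_\bfv\in\gt^+_\bfw$.

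It remains to confirm complexity one and to flag the delicate point. Since $\gt^+(L_{p,q})$ is two-dimensional, $\gh_{L_{p,q}}$ is one-dimensional, and $\gh_\bfw$ is one-dimensional as well, so by Lemma \ref{torjoinlem} the maximal torus of $M_{p,q,\bfw}$ has dimension $1+1+1=3$, one short of the $n+1=4$ needed for a toric $7$-manifold; hence the Sasakian structure is of complexity one. The step I expect to need the most care is the second one --- identifying the Sasakian structure on $L_{p,q}$ entering the join with the \emph{primitive} $S^1$-orbibundle over $(N,\grD_{p,q})$ and pinning down its Fano index as $2(p+q)$ --- because this is precisely where the quasi-regular (rather than regular) nature of $M$ forces one to track orbifold multiplicities carefully, as in Theorem \ref{orbquot} and Lemma \ref{joinorderlem}; granting that matching, the remainder of the argument follows verbatim from Section 6.2 of \cite{BoTo14a}.
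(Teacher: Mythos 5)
Your proposal is correct and follows essentially the same route as the paper: the theorem is obtained by combining Theorem \ref{admjoinse} (with $\cali_N=2(p+q)$ from \eqref{pqFanoind} substituted into the relative Fano index formulas) with Theorem 1.2 of \cite{CoSz15}, which guarantees under $2p>q$, $2q>p$ that the $b=0$ ray of $L_{p,q}$ is Sasaki--Einstein so that the quotient $(N,\grD_{p,q})$ of Lemma \ref{quotorb5} is a positive K\"ahler--Einstein orbifold. Your explicit complexity-one count via Lemma \ref{torjoinlem} is a correct elaboration of a point the paper leaves implicit.
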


The converse of this theorem is not known at this time.

From Proposition \ref{elemtopprop} we see that the 7-manifolds $M_{p,q,\bfw}=k(S^2\times S^3)\star_\bfl S^3_\bfw$ are simply connected with $\pi_2(M_{p,q,\bfw})=\bbz^{k+1}$. 

\subsection{Complexity Two Examples in Dimension $7$}\label{comp2sect}
In this case $\gt^+_\bfw$ is the full Sasaki cone, i.e. the dimension of the Sasaki cone is two. Since any Sasakian structure on $M$ is quasi-regular, there are many cases. Here it suffices to consider a simple example. Consider the Brieskorn-Pham link $L_{k,p}$ defined by
\begin{equation}\label{kplink}
z^k_0+z_1^k+z_2^{k+1}+z_3^p=0
\end{equation}
of degree $pk(k+1)$ and weight vector $\bfw=((k+1)p,(k+1)p,kp,k(k+1))$. The index of the natural quotient is 
\begin{equation}\label{com2index}
\cali_{k,p}=|\bfw|-d=2pk+2p+k-(p-1)k^2.
\end{equation}
These hypersurfaces give infinitely many negative Sasakian structures, finitely many positive Sasakian structures, and one null Sasakian structure. First we only consider $k,p>1$ since if $k$ or $p$ is one we obtain the standard 5 sphere. For simplicity we consider the case that $p$ is relatively prime to both $k$ and $k+1$ in which case $L_{k,p}$ is diffeomorphic to $S^5$ (see Example 10.3.12 in \cite{BG05}). This eliminates the null case which has $(k,p)=(3,12)$. It is easy to see that $S^5$ admits no null (Corollary 10.3.9 in \cite{BG05}) nor indefinite Sasakian structures. We also note that $k=2$ is a special case of the complexity one hypersurfaces considered in Section \ref{compl1sect}. So here we consider $k\geq 3$. It is easy to see that the induced Sasakian structure is negative if 
$$k>3,p>3,\quad k=3, p>12, \quad k\geq 6, p=2,3, \quad \text{or}\quad k=5,p=3$$ 
and positive otherwise. So the only positive Sasakian structures on the hypersurfaces  $L_{k,p}\approx S^5$ with $\gcd(k,p)=\gcd(k+1,p)=1$ are $L_{3,5},L_{3,7},L_{3,11},L_{4,3}$ all of which admit SE metrics, hence cscS metrics, which can be seen from \cite{BGK05,BG05}. All negative Sasakian structures on links of weighted hypersurface singularities admit S-$\eta$-E metrics, hence cscS metrics, by the transverse Aubin-Yau Theorem \cite{ElK}. Summarizing we have

\begin{theorem}\label{kp7man}
For all $k,p$ such that $p$ is relatively prime to $k$ and $k+1$, and relatively prime pairs $(l^0,l^\infty)$ and $(w^0,w^\infty)$, there exists a ray of constant scalar curvature Sasakian structures in $\gt^+_\bfw$ on the 7 dimensional Sasaki orbifolds $M_{k,p,\bfl,\bfw}$, and the entire $\bfw$ cone $\gt^+_\bfw$ is exhausted by extremal Sasaki metrics. Moreover, if $l^\infty$ is large enough there are at least 3 cscS metrics in $\gt^+_\bfw$. $M_{k,p,\bfl,\bfw}$ is a smooth manifold if and only if $\gcd(\lcm(k,k+1,p)l^\infty,w^0w^\infty l^0)=1$.
 \end{theorem}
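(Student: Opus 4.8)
The plan is to realize $M_{k,p,\bfl,\bfw}$ as the $S^3_\bfw$-join $L_{k,p}\star_\bfl S^3_\bfw$ of the quasi-regular Sasaki manifold $L_{k,p}$, equipped with its natural weighted Sasakian structure with Reeb field $\xi_0$, to check that this $L_{k,p}\longrightarrow N$ satisfies the hypotheses of Theorem \ref{admjoincsc}, and then to read off the stated conclusions, deducing the smoothness criterion from Lemma \ref{smoothlem}. The one genuinely new ingredient is the identification of the base orbifold $N$ of $L_{k,p}$ and the computation of its order $\Upsilon_N$; everything else is a direct appeal to results already in hand.

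First I would compute $\Upsilon_N$, where $N$ is the quotient of $L_{k,p}$ by the circle generated by $\xi_0$. That circle acts on $\bbc^4$ with weights $(w_0,w_1,w_2,w_3)=((k+1)p,(k+1)p,kp,k(k+1))$, and on the Brieskorn--Pham hypersurface $z_0^k+z_1^k+z_2^{k+1}+z_3^p=0$ every coordinate $2$-plane meets the link in a nonempty locus, so the isotropy groups of the induced orbifold structure are exactly the cyclic groups of order $\gcd(w_i,w_j)$; hence
$$\Upsilon_N=\lcm_{i<j}\gcd(w_i,w_j)=\lcm\bigl((k+1)p,\,k\bigr)=k(k+1)p=\lcm(k,k+1,p),$$
the last two equalities using $\gcd(p,k)=\gcd(p,k+1)=\gcd(k,k+1)=1$. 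To present $(N,\grD)$ as an honest projective orbifold (branch divisor and cyclic quotient singularities), one proceeds verbatim as in the proof of Lemma \ref{quotorb5}: Delorme's reduction (Proposition 4.5.10 of \cite{BG05}) replaces the non--well-formed $\bbc\bbp^3[w_0,w_1,w_2,w_3]$ by a well-formed weighted projective space, Lemma 5.4.10 of \cite{BG05} pins down the hypersurface equation, and one then records the branch divisor along the images of the coordinate hyperplanes together with the inherited cyclic quotient singularities.

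Next I would verify that $N$ carries a constant scalar curvature Kähler metric, so that Theorem \ref{admjoincsc} applies with $M=L_{k,p}$. Under the coprimality hypothesis $L_{k,p}$ is diffeomorphic to $S^5$ and the null case $(k,p)=(3,12)$ is excluded (there $\gcd(k+1,p)=4$), so the induced Sasakian structure on $L_{k,p}$ is either positive or negative, as recorded above by the sign of $\cali_{k,p}$. In the finitely many positive cases $N$ is a Fano orbifold admitting a Kähler--Einstein metric by \cite{BGK05}, and in the remaining (negative) cases $N$ admits a negative Kähler--Einstein metric by the transverse orbifold Aubin--Yau theorem \cite{ElK}; in either case $N$ is a compact Kähler orbifold of constant scalar curvature $s_N$, with $s_N>0$ exactly in the positive cases. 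Theorem \ref{admjoincsc} then produces, for each pair of coprime positive integers $(l^0,l^\infty)$ and $(w^0,w^\infty)$, a cscS ray $\cals_a$ in the $\bfw$-Sasaki cone $\gt^+_\bfw$ on $M_{k,p,\bfl,\bfw}$; together with the exhaustion of $\gt^+_\bfw$ by extremal Sasaki metrics and (when $l^\infty$ is large) at least three cscS rays, these are precisely the conclusions of that theorem. Finally, since $\Upsilon_\cals=\Upsilon_N=\lcm(k,k+1,p)$, Lemma \ref{smoothlem} gives that $M_{k,p,\bfl,\bfw}$ is a smooth manifold if and only if $\gcd(\lcm(k,k+1,p)\,l^\infty,\,w^0w^\infty l^0)=1$.

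The main obstacle is the first step: the explicit determination of $(N,\grD)$ and hence of $\Upsilon_N$. Because $\bbc\bbp^3[w_0,w_1,w_2,w_3]$ is not well-formed, the Delorme reduction, the multiplicities of the branch divisor, and the locations and orders of the cyclic quotient singularities all require careful bookkeeping --- this is the exact analogue of what was carried out for the complexity-one family in Lemma \ref{quotorb5}, now for the exponent pattern $(k,k,k+1,p)$. Everything downstream --- the Kähler--Einstein metric on $N$, the cscS and extremal conclusions, and the smoothness criterion --- is then a direct invocation of \cite{BGK05}, \cite{ElK}, Theorem \ref{admjoincsc}, and Lemma \ref{smoothlem}.
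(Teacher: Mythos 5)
Your proposal follows essentially the same route as the paper: identify the quotient orbifold of $L_{k,p}$, observe that it carries a cscK (indeed KE) metric --- via \cite{BGK05} in the finitely many positive cases and the transverse Aubin--Yau theorem \cite{ElK} in the negative ones --- and then invoke Theorem \ref{admjoincsc} together with Lemma \ref{smoothlem} and $\Upsilon_N=\lcm(k,k+1,p)$ for the smoothness criterion. Your explicit isotropy computation of $\Upsilon_N$ is a welcome addition (the paper merely records the order after Lemma \ref{quotorb52}); note only that, exactly as in the paper, the extremal-exhaustion and three-csc-ray conclusions are quoted from Theorem \ref{admjoincsc}, whose hypotheses $s_N\ge 0$ and $s_N>0$ hold only in the positive cases, so that part of the claim rests on the same unstated extension to $s_N<0$ in both your argument and the paper's.
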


\begin{remark}\label{spinrem2}
Again as in Remark \ref{spinrem} if $M_{k,p,\bfl,\bfw}$ is smooth it must be a spin manifold. All non-spin cases are orbifolds.
\end{remark}

We now have the analogue of Example \ref{S5ex} and Lemma \ref{quotorb5}
\begin{lemma}\label{quotorb52}
If $\gcd(k,p)=\gcd(k+1,p)=1$, then the quotient orbifold $(N,\grD_{k,p})$ generated by the natural Reeb vector field $\xi_0$ on the link $L_{k,p}$ is the log pair $(\bbc\bbp^2[k,1,1],\grD_{k,p})$ with the branch divisor
$$\grD_{k,p}=(1-\frac{1}{k+1})(z_2=0) +(1-\frac{1}{p})(z_3=0).$$
Moreover, there is an algebraic quotient singularity with isotropy $\bbz_{k}$ occuring at the intersection of the irreducible components of $\grD_{k,p}$, namely the points $[1,e^{\frac{i\pi (2m+1)}{k}},0,0]$, $m=0,\ldots,k-1$.
\end{lemma}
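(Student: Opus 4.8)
The plan is to follow the proof of Lemma~\ref{quotorb5} (and of Example~\ref{S5ex}) almost verbatim, the only new feature being that here the quotient turns out to be a full weighted projective plane rather than a hypersurface in a weighted $\bbc\bbp^3$. First I would record the data: the natural Reeb field $\xi_0$ on $L_{k,p}$ generates the weighted circle action with weights $\bfw=((k+1)p,(k+1)p,kp,k(k+1))$, and the polynomial in \eqref{kplink} is weighted homogeneous of degree $d=\lcm(k,k+1,p)=k(k+1)p$; the hypotheses $\gcd(k,p)=\gcd(k+1,p)=1$ (together with $\gcd(k,k+1)=1$) are used to evaluate $d$, to see that the four weights have overall gcd one, and to know that $L_{k,p}$ is diffeomorphic to $S^5$. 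By Lemma~5.4.10 of \cite{BG05} the underlying complex space of the quotient orbifold $N$ is the weighted-projective hypersurface $\{z_0^k+z_1^k+z_2^{k+1}+z_3^p=0\}\subset\bbc\bbp^3[\bfw]$, carrying the orbifold structure prescribed by the Seifert data of $L_{k,p}\to N$.

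Next I would simplify this. The partial gcd's $\gcd\{w_j:j\neq i\}$ equal $1,1,k+1,p$ for $i=0,1,2,3$ respectively, each coprime to the omitted weight, so Definition~4.5.7 and Proposition~4.5.10 of \cite{BG05} permit passing to well-formed weights. Since $z_3$ occurs in \eqref{kplink} only in the pure power $z_3^p$ and $\gcd(p,k(k+1))=1$, for every $[z_0:z_1:z_2]$ the $p$ solutions $z_3$ of $z_3^p=-(z_0^k+z_1^k+z_2^{k+1})$ form a single orbit of the weighted $\bbc^*$-action; hence ``forgetting $z_3$'' is a finite birational bijection of normal varieties, and so an isomorphism of $N$ onto $\bbc\bbp^2[(k+1)p,(k+1)p,kp]\cong\bbc\bbp^2[k+1,k+1,k]\cong\bbc\bbp^2[k,1,1]$. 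This identifies $N$, as a variety, with $\bbc\bbp^2[k,1,1]$, and shows that the images of $\{z_2=0\}$ and $\{z_3=0\}$ are, respectively, a coordinate line and the curve $\{z_0^k+z_1^k+z_2^{k+1}=0\}$. (One gets the same answer, with the roles of $z_2$ and $z_3$ interchanged, by instead eliminating $z_2$, using $\gcd(k+1,kp)=1$.)

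Finally I would pin down the orbifold structure. The branch divisor comes from the isotropy of the weighted circle action along the coordinate loci $\{z_i=0\}\cap L_{k,p}$: the Seifert-invariant bookkeeping of Chapter~4 of \cite{BG05} gives trivial contributions from the images of $\{z_0=0\}$ and $\{z_1=0\}$ (the repeated exponent $k$) and cyclic groups of orders $k+1$ and $p$ transverse to the images of $\{z_2=0\}$ and $\{z_3=0\}$, which is exactly $\grD_{k,p}=(1-\tfrac1{k+1})(z_2=0)+(1-\tfrac1p)(z_3=0)$. For the algebraic quotient singularities I would localize at a point of $N$ over $\{z_2=z_3=0\}$, where $z_0^k+z_1^k=0$ singles out the $k$ points $[1:e^{i\pi(2m+1)/k}:0:0]$, write down the local uniformizing group there, absorb its pseudo-reflection part via Chevalley--Shephard--Todd, and combine with the $\bbz_k$ coming from the weight $k$ of $\bbc\bbp^2[k,1,1]$, to read off the quotient singularity at that point; running over $m=0,\dots,k-1$ then yields the list in the statement. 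The hard part will be exactly this last bookkeeping — correctly distinguishing, at $\{z_2=0\}$, $\{z_3=0\}$, and their intersection, the part of the ambient isotropy of the non-well-formed model $\bbc\bbp^3[\bfw]$ that is absorbed by passing to well-formed weights from the part that survives as genuine orbifold data of $(N,\grD_{k,p})$; this is precisely where the coprimality hypotheses $\gcd(k,p)=\gcd(k+1,p)=1$ are needed.
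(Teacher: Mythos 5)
Your overall strategy is the one the paper intends (its own proof is literally ``similar to that of Lemma \ref{quotorb5} and left as an exercise''), and the first two thirds of your plan are sound: the weight reduction, the identification of the underlying variety of $N$ with $\bbc\bbp^2[k,1,1]$ by eliminating $z_3$ (using $\gcd(p,k(k+1))=1$), and the computation of the branch divisor all check out. The problem is the last step, which you defer as ``bookkeeping''; carried out, it does not produce a singularity where the statement (and your plan) put it. At a point $[1,e^{i\pi(2m+1)/k},0,0]$ the local uniformizing group is the isotropy $\mu_{(k+1)p}$ of the circle action (coming from the two nonvanishing coordinates of weight $(k+1)p$), acting on the slice $\bbc^2_{(z_2,z_3)}$ with weights $(kp,\,k(k+1))$ modulo $(k+1)p$. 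The subgroup acting trivially on $z_3$ is $\mu_{k+1}$ (acting faithfully on $z_2$), the subgroup acting trivially on $z_2$ is $\mu_p$ (acting faithfully on $z_3$), and since $\gcd(k+1,p)=1$ these two pseudo-reflection subgroups generate all of $\mu_{(k+1)p}$; by Chevalley--Shephard--Todd the underlying variety is therefore \emph{smooth} at these $k$ points, the whole local orbifold structure being carried by the two branch-divisor components crossing with ramifications $k+1$ and $p$. There is no residual $\bbz_k$ to extract there.

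Moreover, your own intermediate result already locates the genuine singularity: in your model $\bbc\bbp^2[k+1,k+1,k]\cong\bbc\bbp^2[1,1,k]$ the weight-$k$ coordinate is (the image of) $z_2$, so the unique singular point of the variety is $[0:0:1]$, i.e.\ the single point of $N$ with $z_0=z_1=0$ (where $z_2^{k+1}+z_3^p=0$, hence $z_2,z_3\neq 0$). A direct slice computation confirms this: the isotropy there is $\mu_k$ acting on $\bbc^2_{(z_0,z_1)}$ with equal weights $((k+1)p,(k+1)p)\equiv(p,p)\bmod k$, a faithful scalar action with no pseudo-reflections, giving a $\bbz_k$ quotient singularity --- and this point lies off $\grD_{k,p}$, not on the intersection of its components. (It is the true analogue of the $\bbz_{k+1}$ points $[0,0,1,\pm i]$ of Lemma \ref{quotorb5}, which arise from the repeated exponent; there the two exponents at the branch-divisor crossing were both $2$, whereas here they are the coprime pair $k+1,p$, which is exactly why the crossing points are now smooth.) So the proposal cannot be completed as written: the step you flag as ``the hard part'' is not hard but contradictory, and your suggestion to ``combine with the $\bbz_k$ coming from the weight $k$ of $\bbc\bbp^2[k,1,1]$'' at points where the weight-$k$ coordinate vanishes conflates two different loci. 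The singularity claim has to be relocated to the point $z_0=z_1=0$ before any proof along these lines can close.
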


\begin{proof}
The proof is similar to that of Lemma \ref{quotorb5} and left as an exercise.
\end{proof}

Note that the order of the link $L_{k,p}$ is $\lcm(k,k+1,p)$.

%{\color{blue} I'm still working here}

%The computations are similar to those in Section 3 of \cite{BoTo18c}. We compute $H^2_{orb}((N,\grD_N),\bbz)$.
%\begin{lemma}\label{orbcoh}
%We have
%$$H^{r}_{orb}((N_{p,q},\grD_{p,q}),\bbz)=\begin{cases} \bbz &~~\text{if $r=0$} \\
   %                                  \bbz^{\gcd(p,q)}&  ~~\text{if $r=2$} \\
     %
     %                             \bbz\oplus \bbz_{p}\oplus \bbz_{q} & ~~\text{if $r=4$} \\
        %                          \bbz_{p}\oplus \bbz_{q} & ~~\text{if $r=6,8,\cdots$} \\
           %                       0 & ~~\text{if $r$ is odd.}
              %                    \end{cases}$$
%\end{lemma}

%\begin{proof}
%The proof is quite similar to that of Lemma 3.4 of \cite{BoTo18c}. The difference is that the branch divisors in \cite{BoTo18c} do not intersect, whereas, here they intersect in two points which are $\bbz_2$ quotient singularities.

%\end{proof}

\subsection{Topology of the Joins}
Clearly, we can apply our Theorems \ref{admjoincsc} and \ref{admjoinse} to obtain many new cscS and SE metrics. A particularly interesting case is when $M$ is a Sasaki homotopy sphere as studied, for example, in \cite{BGN03b,BGK05,GhKo05}. It is well known that there are precisely 28 oriented diffeomorphism types of homotopy spheres in dimension 7 and these all bound a parallelizable manifold. If we take the $S^3_\bfw$ join of these we obtain a simply connected Sasaki 9-manifold with $\pi_2=\bbz$ that bounds a parallelizable manifold. What are the possible diffeomorphism types? There are only 2 diffeomorphism types of homotopy spheres that bound a parallelizable manifold in dimension 9. Similarly there are many more oriented diffeomorphism types in dimension 11 than in dimension 9. Although it is beyond the scope of the present work, it would be very interesting to understand how the join operation relates to these diffeomorphism types.

We have already some elementary results in Proposition \ref{elemtopprop}. Here we adopt the method described in \cite{BoTo14a} to the general quasi-regular situation. The fibration \eqref{S3joineqn} and Diagram (\ref{s2comdia}) together with the torus bundle with total space $M\times S^3_\bfw$ gives the commutative diagram of fibrations
\begin{equation}\label{orbifibrationexactseq}
\begin{matrix}M\times S^3_\bfw &\longrightarrow &M_{\bfl,\bfw}&\longrightarrow
&\mathsf{B}S^1 \\
\decdnar{=}&&\decdnar{\pi}&&\decdnar{\psi}\\
M\times S^3_\bfw&\longrightarrow & \mathsf{B}(N,\grD)\times\mathsf{B}\bbc\bbp^1[\bfw]&\longrightarrow
&\mathsf{B}S^1\times \mathsf{B}S^1\, 
\end{matrix} \qquad \qquad
\end{equation}
where $\mathsf{B}G$ is the classifying space of a group $G$ or Haefliger's classifying space \cite{Hae84} of an orbifold if $G$ is an orbifold. The map $\psi$ of Diagram (\ref{orbifibrationexactseq}) is that induced by the inclusion $e^{i\theta}\mapsto (e^{il^\infty\theta},e^{-il^0\theta})$. Note that the lower fibration is a product of fibrations. The orbifold cohomology of $\bbc\bbp^1[\bfw]$ was computed in \cite{BoTo14a}
$$H^r_{orb}(\bbc\bbp^1[\bfw],\bbz)=H^r( \mathsf{B}\bbc\bbp^1[\bfw],\bbz)= \begin{cases}
                    \bbz &\text{for $r=0,2$,}\\                  
                    \bbz_{w^0w^\infty} &\text{for $r>2$ even,}\\
                     0 &\text{for $r$ odd.}
                     \end{cases}$$      
The method then consists of computing the differentials of the fibration
\begin{equation}\label{Mfib}
M\longrightarrow \mathsf{B}(N,\grD)\longrightarrow  \mathsf{B}S^1
\end{equation}
and using the commutative diagram \eqref{orbifibrationexactseq} to obtain the cohomology ring of $M_{\bfl,\bfw}$. This method dates back to \cite{WaZi90} for the manifold case and was generalized to the $S^3_\bfw$-join for $M=S^{2n+1}$ with its standard constant curvature metric in \cite{BoTo14a}, and in \cite{BoTo18c} when $M$ is a $Y^{p,q}$ with a quasi-regular SE metric. In both cases the torsion group $\bbz_{w^0w^\infty(l^0)^2}$ occurs in $H^4(M_{\bfl,\bfw},\bbz)$. 

\begin{proposition}\label{cohjoin}
Let $M_{\bfl,\bfw}=M\star_\bfl S^3_\bfw$ and assume that $M$ has dimension greater than 3, is simply connected and that $b_3(M)=0$. Then $H^4(M_{\bfl,\bfw},\bbz)$ contains the torsion group $\bbz_{w^0w^\infty(l^0)^2}$.
\end{proposition}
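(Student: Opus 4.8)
The plan is to run the Leray--Serre spectral sequence for the orbifold fibration \eqref{Mfib}, feed the result into the two-column comparison of fibrations in Diagram \eqref{orbifibrationexactseq}, and track the differentials through degree $4$. First I would set up the spectral sequence of $M\longrightarrow \mathsf{B}(N,\grD)\longrightarrow \mathsf{B}S^1$. Since $\mathsf{B}S^1$ is an orbifold classifying space with $H^*(\mathsf{B}S^1,\bbz)=\bbz[x]$, $|x|=2$, and the Euler class of the associated orbibundle is the orbifold K\"ahler class which, by the standing primitivity assumption $c_1^{orb}(M/N)=[\gro_N]$, is an \emph{integral primitive} class $[\gro_N]_I=\Upsilon_N[\gro_N]$ pulled back from $\mathsf{B}(N,\grD)$; the transgression $d_2\colon H^0(M)\otimes x\to H^2(\mathsf{B}(N,\grD))$ therefore sends $x$ to this primitive generator. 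The hypotheses ($M$ simply connected of dimension $>3$ with $b_3(M)=0$, hence $H^1(M,\bbz)=H^3(M,\bbz)=0$ and $H^2(M,\bbz)$ free since $M$ is simply connected) guarantee that in low degrees the only nonzero fibre cohomology entering the relevant antidiagonals is $H^0(M)$ and $H^2(M)$, so the $E_2$-page is controlled; this is precisely the point where we import that the situation mirrors \cite{WaZi90} and \cite{BoTo14a}.

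Next I would do the same for the $S^3_\bfw$ factor, using the orbifold cohomology of $\bbc\bbp^1[\bfw]$ recorded in the excerpt, where $H^r_{orb}(\bbc\bbp^1[\bfw],\bbz)=\bbz_{w^0w^\infty}$ for even $r>2$. The map $\psi$ in \eqref{orbifibrationexactseq} is induced by $e^{i\theta}\mapsto(e^{il^\infty\theta},e^{-il^0\theta})$, so on $H^2(\mathsf{B}S^1\times\mathsf{B}S^1)=\bbz x_1\oplus\bbz x_2$ the pullback $\psi^*$ sends $x_1\mapsto l^\infty x$ and $x_2\mapsto -l^0 x$, where $x$ generates $H^2(\mathsf{B}S^1,\bbz)$ for the top fibration. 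Comparing the two Serre spectral sequences via the vertical maps of \eqref{orbifibrationexactseq}, the class $x\in H^2(\mathsf{B}S^1,\bbz)$ (top right) pulls back under $\pi^*$ and the differentials in the $M_{\bfl,\bfw}$ spectral sequence are governed by the differentials in the product spectral sequence together with $\psi^*$. The key computation is then to chase a generator of $H^4(\bbc\bbp^1[\bfw])\cong\bbz_{w^0w^\infty}$, combined with the $l^0$-twist coming from $\psi^*(x_2)=-l^0x$ occurring twice (once for each of the two transgressions that land in $H^4$), to see that the cyclic torsion appearing in $H^4(M_{\bfl,\bfw},\bbz)$ has order $w^0w^\infty(l^0)^2$, exactly as in the $M=S^{2n+1}$ and $M=Y^{p,q}$ cases treated in \cite{BoTo14a,BoTo18c}. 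The condition $b_3(M)=0$ is what prevents extra differentials or extension problems from altering this torsion, and $\dim M>3$ ensures $H^4$ of the fibre does not interfere.

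The main obstacle I anticipate is the extension problem at the $E_\infty$-stage: $H^4(M_{\bfl,\bfw},\bbz)$ is assembled from several associated-graded pieces along the antidiagonal $p+q=4$ of the $E_\infty$-page, and the assertion is only that $\bbz_{w^0w^\infty(l^0)^2}$ is \emph{contained in} $H^4$, so one must show the relevant short exact sequence does not split off or absorb this cyclic summand in a way that changes its order. I would handle this by arguing that the $\bbz_{w^0w^\infty(l^0)^2}$ subgroup is precisely the image of the edge homomorphism $\pi^*\colon H^4(\mathsf{B}(N,\grD)\times\mathsf{B}\bbc\bbp^1[\bfw])\to H^4(M_{\bfl,\bfw})$ restricted to the pullback of $H^4_{orb}(\bbc\bbp^1[\bfw])$ composed with multiplication by the square of the twisting class, and that this image is a genuine subgroup of the stated order because the pairing with the (free) classes coming from $H^2(M)$ detects it; the freeness of $H^2(M,\bbz)$ and the vanishing $H^1(M)=H^3(M)=0$ make the bookkeeping of which classes survive to $E_\infty$ unambiguous, so the extension cannot collapse the torsion. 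A clean way to present this is to phrase the whole argument as a naturality diagram chase, deferring the purely computational verification to the parallel arguments already carried out in \cite{BoTo14a} for $S^{2n+1}$ and in \cite{BoTo18c} for $Y^{p,q}$, noting that neither of those arguments used anything about $M$ beyond simple connectivity, $b_3(M)=0$, and $\dim M>3$.
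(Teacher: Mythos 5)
Your proposal is correct and follows essentially the same route as the paper: the Leray--Serre spectral sequences of the two fibrations in Diagram \eqref{orbifibrationexactseq}, the differential $d_4(\gra)=w^0w^\infty s_2^2$ for the $S^3_\bfw$ factor, naturality through $\psi^*(s_2)=-l^0 s$ (so that $\psi^*(s_2^2)=(l^0)^2s^2$, which is the cleaner way to see the square rather than ``two transgressions''), with $H^1(M)=H^3(M)=0$ ensuring no other classes feed into $E^{4,0}$. Your observation that $E_\infty^{4,0}$ is the bottom filtration step and hence an honest subgroup of $H^4(M_{\bfl,\bfw},\bbz)$, so no extension problem arises for the containment claim, is exactly the point the paper leaves implicit.
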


\begin{proof}
Consider the differentials in the Leray-Serre spectral sequence of the bottom fibration of \eqref{orbifibrationexactseq} which is a product. For the $S^3$ fibration we have $d_4(\gra)=w^0w^\infty s_2^2$ where $\gra$ is the orientation class of $S^3$ and $s_2$ is the generating class of the second $\mathsf{B}S^1$. Now for the $M$ fibration Proposition \ref{elemtopprop}, $b_3(M)=0$ and the Universal Coeffients Theorem imply that $H^3(M,\bbz)=0$. It follows that in the spectral sequence of the fibration \eqref{Mfib} $s_1^2$ survives to $E_\infty$. By naturality the differentials in the bottom product fibrations of Diagram \eqref{orbifibrationexactseq} pullback to the differentials of the top fibration, and since the diagram commutes we have $d_4(\gra)=w^0w^\infty(l^0)^2s^2$. This class survives to $E_\infty$ of the top fibration, thus, producing the torsion group in $H^4$.
\end{proof}

Further details depend on the differentials in the spectral sequence of the fibration \eqref{Mfib}.

\begin{example}\label{S5ex}
A particular case of interest in Lemma \ref{quotorb5} is when $\gcd(p,q)=1$ or equivalently $k=0$ which gives $M=S^5$. However, we only need the manifold $M$ to be homeomorphic to $S^5$. In what follows we assume that $M_{\bfl,\bfw}=S^5\star_\bfl S^3_\bfw$. We have

%In this case $N$ is a weighted hyperplane isomorphic to $\bbc\bbp^2[2,1,1]$, so it can be represented by the log pair $(\bbc\bbp^2[2,1,1],\grD_{p,q})$. In this case our join is $M_{0,p,q,\bfl,\bfw}=S^5\star_\bfl S^3_\bfw$. It follows from Proposition \ref{cohjoin} that the homotopy types of $M_{0,p,q,\bfl,\bfw}$ are distinguished by torsion in $H^4$. In principal these depend on the integers $p,q,l^0,l^\infty,w^0,w^\infty$. 

\begin{lemma}\label{7topQ}
$H^3(M_{\bfl,\bfw},\bbz)=0$ and $M_{\bfl,\bfw}$ has the rational cohomology ring of $S^2\times S^5$.
\end{lemma}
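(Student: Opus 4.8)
The plan is to compute $H^*(M_{\bfl,\bfw};\bbq)$ from the Leray--Serre spectral sequence of the top row of \eqref{orbifibrationexactseq}, i.e. of the (orbifold) fibration $M\times S^3_\bfw\longrightarrow M_{\bfl,\bfw}\longrightarrow \mathsf{B}S^1$, and then to upgrade the degree-$3$ part to integral coefficients via the Universal Coefficient Theorem. Since $M$ is homeomorphic to $S^5$, the fibre is rationally $S^5\times S^3$, so with field coefficients
$$E_2^{*,*}=H^*(\mathsf{B}S^1;\bbq)\otimes H^*(M\times S^3_\bfw;\bbq)=\bbq[s]\otimes\Lambda_\bbq(\gra_3,\gra_5),\qquad |s|=2,\ |\gra_3|=3,\ |\gra_5|=5,$$
where $\gra_3$ is the orientation class of the $S^3$ factor and $\gra_5$ that of $M$. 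For degree reasons $d_2=d_3=0$, so $E_4=E_2$, and by the same naturality argument used in the proof of Proposition \ref{cohjoin} (applied to the map of fibrations \eqref{orbifibrationexactseq}, whose bottom row is the product of $M\to\mathsf{B}(N,\grD)\to\mathsf{B}S^1$ with $S^3_\bfw\to\mathsf{B}\bbc\bbp^1[\bfw]\to\mathsf{B}S^1$) one gets $d_4(\gra_3)=w^0w^\infty(l^0)^2 s^2$. Moreover $\gra_5$ survives to $E_\infty$: its only potential nonzero differential is $d_6(\gra_5)\in E_6^{6,0}$, and $E_6^{6,0}=0$ because $s^3\in H^6(\mathsf{B}S^1;\bbq)$ is already hit by $d_4(s\gra_3)$.

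Running the spectral sequence, $d_4$ is injective on the $\gra_3$- and $\gra_3\gra_5$-rows (as $w^0,w^\infty,l^0\ge 1$) and kills $s^j$ for $j\ge 2$ in the $1$-row and $s^j\gra_5$ for $j\ge 2$ in the $\gra_5$-row; hence $E_5$ is spanned by $E_5^{0,0}=\bbq\cdot 1$, $E_5^{2,0}=\bbq\cdot s$, $E_5^{0,5}=\bbq\cdot\gra_5$ and $E_5^{2,5}=\bbq\cdot s\gra_5$, and since these sit in total degrees $0,2,5,7$ no higher differential can act, so $E_5=E_\infty$. Thus $\dim_\bbq H^n(M_{\bfl,\bfw};\bbq)=1$ for $n\in\{0,2,5,7\}$ and $0$ otherwise --- the rational Betti numbers of $S^2\times S^5$. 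Since each $H^n$ is at most one-dimensional there are no additive extension problems, and writing $x$ for the class of $s$ and $y$ for the class of $\gra_5$, the multiplicative structure of $E_\infty$ gives $x^2=0$ (because $H^4=0$), $y^2=0$, and $xy\neq 0$ generating $H^7$ (it is the class of $s\gra_5\in E_\infty^{2,5}$; equivalently, the Poincar\'e duality pairing $H^2\otimes H^5\to H^7$ of the closed oriented $7$-orbifold $M_{\bfl,\bfw}$ is perfect). Hence $H^*(M_{\bfl,\bfw};\bbq)\cong\bbq[x,y]/(x^2,y^2)$ with $|x|=2,\ |y|=5$, which is the rational cohomology ring of $S^2\times S^5$; in particular $H^3(M_{\bfl,\bfw};\bbq)=0$.

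To finish, $H^3(M_{\bfl,\bfw};\bbz)$ is a finite group because $H^3(M_{\bfl,\bfw};\bbq)=0$. The Universal Coefficient Theorem gives $0\to\mathrm{Ext}^1_\bbz(H_2(M_{\bfl,\bfw};\bbz),\bbz)\to H^3(M_{\bfl,\bfw};\bbz)\to\mathrm{Hom}(H_3(M_{\bfl,\bfw};\bbz),\bbz)\to 0$. By Proposition \ref{elemtopprop} (with $k=0$, since $\pi_2(M)=0$) we have $H_2(M_{\bfl,\bfw};\bbz)=\bbz$, which is torsion-free, so the $\mathrm{Ext}$ term vanishes; and $\mathrm{Hom}(H_3(M_{\bfl,\bfw};\bbz),\bbz)$ has rank $b_3(M_{\bfl,\bfw})=0$. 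Therefore $H^3(M_{\bfl,\bfw};\bbz)=0$.

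I expect the one point requiring care to be the determination of the differentials of the top fibration: identifying the $E_2$ page only uses that $M$ is a rational homology $5$-sphere (which holds as $M$ is homeomorphic to $S^5$), and the crucial relation $d_4(\gra_3)=w^0w^\infty(l^0)^2 s^2$ is imported essentially verbatim from the proof of Proposition \ref{cohjoin}. After that the collapse at $E_5$, the ring identification, and the integral refinement are routine; the only mild subtlety is to work throughout with $\bbq$-coefficients and Haefliger classifying spaces so that the (possibly genuine) orbifold $M_{\bfl,\bfw}$ creates no difficulty.
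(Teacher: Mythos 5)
Your argument is correct, and its skeleton coincides with the paper's: both run the Leray--Serre spectral sequence of $S^3\times S^5\to M_{\bfl,\bfw}\to \mathsf{B}S^1$ over $\bbq$, observe $E_2=E_4$, show the transgression of the $3$-class is nonzero, read off $E_\infty$, and then deduce $H^3(M_{\bfl,\bfw},\bbz)=0$ from $H^3(M_{\bfl,\bfw},\bbq)=0$ together with $H_2(M_{\bfl,\bfw},\bbz)=\bbz$ (Proposition \ref{elemtopprop}) via the Universal Coefficient Theorem. The one genuine divergence is how the key nonvanishing $d_4(\gra_3)\neq 0$ is established. You import it from the naturality computation in the proof of Proposition \ref{cohjoin} (legitimately, since $M\cong S^5$ satisfies its hypotheses: simply connected, $\dim>3$, $b_3=0$), obtaining the sharp integral statement $d_4(\gra_3)=w^0w^\infty(l^0)^2s^2$. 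The paper instead argues by contradiction from the geometry: if $d_4(\gra_3)=0$ then $\gra_3$ survives and $b_3(M_{\bfl,\bfw})=1$, which is impossible because a compact Sasaki $7$-manifold has even $b_3$ (transverse Hodge theory). Your route is purely topological, needs no Sasakian input, and hands you the precise torsion coefficient that Proposition \ref{mainpropexam} uses anyway; the paper's route is shorter in context and is a nice instance of geometry constraining topology. Your treatment of the survival of $\gra_5$, the collapse at $E_5$, the ring identification via the multiplicative structure of $E_\infty$ (or rational Poincar\'e duality for the orbifold), and the integral refinement in degree $3$ are all sound.
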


\begin{proof}
From the Leray-Serre spectral sequence of the fibration 
$$S^3\times S^5\longrightarrow M_{\bfl,\bfw}\longrightarrow \mathsf{B}S^1$$
we see that $E^{r,s}_2=E^{r,s}_4$. We let $\gra$ be the orientation class of $S^3$, then, over $\bbq$ , there are two possibilities for $d_4$, either $d_4(\gra)=s^2$ or $d_4(\gra)=0$ where $s$ is the 2-class of $\mathsf{B}S^1$. If the latter holds then the 3-class $\gra$ will survive to $E_\infty$, and there is only one such class. But by the Leray-Serre Theorem this converges to the cohomology of $M_{\bfl,\bfw}$ which has a Sasaki metric. This implies that the rank of $H^3(M_{\bfl,\bfw},\bbq)$ must be even which gives a contradiction. Thus, we must have $d_4(\gra)=s^2$, and this implies that $H^3(M_{\bfl,\bfw},\bbq)=0$. But from Proposition \ref{elemtopprop} we have $H_2(M_{\bfl,\bfw},\bbz)=\bbz$, so $H^3$ vanishes over $\bbz$ as well by the Universal Coefficients Theorem. 
Then by naturality we have $d_4(\gra\otimes s^j)=s^{j+1}$. So for $E^{2r,0}$ only $E^{2,0}$ survives to $E_\infty$. The only classes that survive to $E^\infty$ are the 2-class $s$ on the base, the 5-class $\grb$ from the fiber, and the 7-class $\grb\otimes s$.
\end{proof}

We now have
\begin{proposition}\label{mainpropexam}
Consider the join $M_{\bfl,\bfw}=S^5\star_\bfl S^3_\bfw$ and assume also that $M_{\bfl,\bfw}$ is a smooth manifold. Then the cohomology ring of $M_{\bfl,\bfw}$ is
$$\bbz[x,y]/(w^0w^\infty (l^0)^2x^2,x^3,x^2y,y^2)$$
where $x$ is a 2-class and $y$ is a 5-class.
\end{proposition}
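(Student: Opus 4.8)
The plan is to run the integral Leray--Serre spectral sequence of the fibration
$$S^3\times S^5\longrightarrow M_{\bfl,\bfw}\longrightarrow \mathsf{B}S^1$$
coming from \eqref{S3joineqn} (here $M$ is (homeomorphic to) $S^5$ and $S^3_\bfw\cong S^3$, so the fibre has $H^*(S^3\times S^5;\bbz)=\Lambda(\gra,\grb)$ with $\deg\gra=3$, $\deg\grb=5$), exactly as in the proof of Lemma \ref{7topQ} but now tracking torsion. Write $H^*(\mathsf{B}S^1;\bbz)=\bbz[s]$ with $\deg s=2$ and put $N_0:=w^0w^\infty(l^0)^2$. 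Since $H^*(S^3\times S^5;\bbz)$ is concentrated in degrees $0,3,5,8$ and every odd differential lands in $H^{\mathrm{odd}}(\mathsf{B}S^1)=0$, one checks $E_2=E_4$ and that the only possibly nonzero differentials are $d_4$ and $d_6$.

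First I would pin down $d_4$. By naturality applied to the commutative square of fibrations \eqref{orbifibrationexactseq} — this is precisely the computation in the proof of Proposition \ref{cohjoin} — we get $d_4(\gra)=N_0\,s^2$, while $d_4(\grb)=0$ for degree reasons (its target $E_4^{4,2}$ vanishes) and $d_4(\gra\grb)=N_0\,s^2\grb$ by the Leibniz rule. Passing to $E_5$: the rows in fibre-degree $3$ and $8$ die (the relevant $d_4$'s are injective into torsion-free groups), the base row $E_5^{\ast,0}$ becomes $\bbz,\ \bbz s,\ \bbz/N_0\,(s^2),\ \bbz/N_0\,(s^3),\dots$ in horizontal degrees $0,2,4,6,\dots$, and the fibre-degree-$5$ row becomes $\bbz\grb,\ \bbz s\grb,\ \bbz/N_0\,(s^2\grb),\dots$. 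There are no $d_5$'s, so $E_5=E_6$. For $d_6\colon E_6^{p,5}\to E_6^{p+6,0}$ I would note that $M_{\bfl,\bfw}$ is a smooth, simply connected (Proposition \ref{elemtopprop}), hence closed orientable $7$-manifold, so Poincar\'e duality gives $H^6(M_{\bfl,\bfw};\bbz)\cong H_1=0$ and $H^k(M_{\bfl,\bfw};\bbz)=0$ for $k>7$; since nothing hits $E^{\ast,0}$ after page $6$, the groups $E_7^{p,0}$ with $p\ge 6$ must all vanish, which forces each $d_6\colon E_6^{p-6,5}\to E_6^{p,0}=\bbz/N_0$ (for $p\ge6$ even) to be surjective. (Equivalently one may compute directly, as in Proposition \ref{cohjoin}, that $d_6(\grb)=\pm(l^\infty)^3 s^3$ and invoke $\gcd(l^\infty,N_0)=1$, which holds by the smoothness criterion of Lemma \ref{smoothlem} since $\Upsilon_\cals=1$ here.) Hence $E_7=E_\infty$ with $E_\infty^{0,0}=\bbz$, $E_\infty^{2,0}=\bbz s$, $E_\infty^{4,0}=\bbz/N_0$ (generated by $s^2$), $E_\infty^{0,5}=\bbz$ (generated by $N_0\grb$), $E_\infty^{2,5}=\bbz$ (generated by $N_0 s\grb$), and all other $E_\infty$ entries zero.

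Reading off the associated graded gives $H^*(M_{\bfl,\bfw};\bbz)$ additively: $\bbz$ in degrees $0$ and $2$, $\bbz/N_0$ in degree $4$, $\bbz$ in degrees $5$ and $7$, and $0$ otherwise (consistent with Proposition \ref{cohjoin}, Lemma \ref{7topQ} and Poincar\'e duality). Let $x\in H^2$ be the generator corresponding to $s$ and $y\in H^5$ the one corresponding to $N_0\grb$. Multiplicativity of the spectral sequence shows that $x^2$ corresponds to $s^2$, hence generates $H^4=\bbz/N_0$, so $N_0x^2=0$ and $x^2$ has order exactly $N_0$; likewise $xy$ corresponds to $N_0 s\grb$ and therefore generates $H^7\cong\bbz$. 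The classes $x^3,\ x^2y,\ y^2$ lie in $H^6=H^9=H^{10}=0$ and so vanish. Thus $H^*(M_{\bfl,\bfw};\bbz)$ is generated as a ring by $x$ and $y$, the relations $N_0x^2=x^3=x^2y=y^2=0$ hold, and since the graded abelian group underlying $\bbz[x,y]/(w^0w^\infty(l^0)^2x^2,x^3,x^2y,y^2)$ has $\bbz$-basis $1,x,x^2,y,xy$ in degrees $0,2,4,5,7$ with $x^2$ of order $N_0$, it matches $H^*(M_{\bfl,\bfw};\bbz)$ exactly; hence there are no further relations and we obtain the claimed isomorphism.

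The main obstacle I anticipate is not the differentials (which are forced as above) but the multiplicative bookkeeping: one must genuinely rule out hidden extensions in the ring structure, i.e.\ verify that the honest cohomology classes $x^2$ and $xy$ — not merely some classes in those degrees — are the generators of $H^4$ and $H^7$. This is handled by working inside the multiplicative spectral sequence and comparing leading terms in the filtration, which is also the reason it is cleaner to identify $y$ with the class represented by $N_0\grb\in E_\infty^{0,5}$ rather than restricting to the fibre directly (the fibre restriction of $y$ is $N_0$ times the fibre generator, not the generator). The smoothness hypothesis enters only to guarantee that $M_{\bfl,\bfw}$ is a closed manifold, so that Poincar\'e duality is available — or, in the alternative route, to ensure $\gcd(l^\infty,N_0)=1$ so that $d_6(\grb)$ is a generator of $\bbz/N_0$.
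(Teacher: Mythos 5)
Your proof is correct and takes essentially the same route as the paper: the Leray--Serre spectral sequence of the upper fibration in Diagram \eqref{orbifibrationexactseq}, naturality against the bottom product fibration to get $d_4(\gra)=w^0w^\infty(l^0)^2s^2$, and Poincar\'e duality to kill $H^6$ and force the $d_6$ differentials; you are simply writing out in full the steps the paper compresses into ``one easily sees.'' The only (non-load-bearing) quibble is the parenthetical $d_6(\grb)=\pm(l^\infty)^3s^3$, which as stated presumes the standard quotient $S^5\to\bbc\bbp^2$, whereas your main argument via Poincar\'e duality works for any quasi-regular Sasakian structure on $S^5$.
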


\begin{proof}
As with Theorem 4.5 of \cite{BoTo14a} we use naturality applied to the commutative diagram \eqref{orbifibrationexactseq}. In fact the result agrees with Theorem 4.5 although no assumption of regularity of the Reeb vector on $S^5$ is made. However, here we need to impose the smoothness condition; whereas, it is automatic in Theorem 4.5 of \cite{BoTo14a}. As in Theorem \ref{pqw7man} $M_{\bfl,\bfw}$ is smooth if and only if $\gcd(2pql^\infty,w^0w^\infty l^0)=1$. Now we know from Proposition \ref{elemtopprop} and Poincar\'e duality that $H^6(M_{\bfl,\bfw},\bbz)=0$ and $H^5(M_{\bfl,\bfw},\bbz)=\bbz$. Moreover, from Lemma \ref{7topQ} $H^3(M_{\bfl,\bfw},\bbz)=0$ and from Proposition \ref{cohjoin} $H^4(M_{\bfl,\bfw},\bbz)$ contains the torsion group $\bbz_{w^0w^\infty(l^0)^2}$. One easily sees from the Leray-Serre spectral sequence of the upper fibration in Diagram \ref{orbifibrationexactseq} that $H^4(M_{\bfl,\bfw},\bbz)=\bbz_{w^0w^\infty(l^0)^2}$ from which the cohomology ring easily follows.
\end{proof}

\end{example}

\begin{remark}\label{highsphrem}
This proposition generalizes to higher dimensional spheres as long as the smoothness condition of the join holds. That is, if $M_{\bfl,\bfw}=S^{2r+1}\star_\bfl S^3_\bfw$ is a smooth manifold, then its cohomology ring is
$$\bbz[x,y]/(w^0w^\infty (l^0)^2x^2,x^{r+1},x^2y,y^2)$$
where $x$ is a 2-class, and $y$ is a $(2r+1)$-class.
\end{remark}

Proposition \ref{mainpropexam} implies that the $S^3_\bfw$ join with any 5-sphere has the same cohomology ring. In particular, we apply this to the standard toric join, the complexity one links $L_{p,q}$ with $\gcd(p,q)=1$ and the complexity two links $L_{k,p}$ with $\gcd(p,k)=\gcd(p,k+1)=1$. We are now ready to apply Sullivan's rational homotopy theory \cite{Sul77}. As described in \cite{WaZi90} this implies that if a collection of simply connected closed manifolds have the same cohomology ring, the same rational Pontrjagin classes, and if the so-called minimal model is a formal consequence of the rational cohomology ring, then the collection has only finitely many diffeomorphism types. Let $\calc_{p,q,\bfl,\bfw}$ be the collection of smooth $S^3_\bfw$-joins $L_{p,q}\star_\bfl S^3_\bfw$ of Brieskorn links with $w^0w^\infty(l^0)^2$ fixed and satisfy Equation \eqref{Brieslink} with $\gcd(p,q)=1$. Similarly let $\calc_{k,p,\bfl,\bfw}$ be the collection of smooth $S^3_\bfw$-joins $L_{k,p}\star_\bfl S^3_\bfw$ of Brieskorn links with $w^0w^\infty(l^0)^2$ fixed and satisfy Equation \eqref{kplink} with $\gcd(p,k)=\gcd(p,k+1)=1$. These collections have the cardinality of $\bbn$. Then we have

\begin{proposition}\label{rathomprop}
There are finitely many diffeomorphism types within the collection $\calc_{p,q,\bfl,\bfw}$. There are finitely many diffeomorphism types within the collection $\calc_{k,p,\bfl,\bfw}$ with fixed $k$. 
\end{proposition}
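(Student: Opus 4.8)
The plan is to invoke the finiteness principle recalled above (Sullivan \cite{Sul77}, as applied in \cite{WaZi90}): a collection of closed, simply connected, smooth manifolds of a fixed dimension $\ge 5$ that all share the same cohomology ring, have matching rational Pontryagin classes, and whose minimal model is a formal consequence of the rational cohomology ring contains only finitely many diffeomorphism types. So I would reduce the statement to verifying these three hypotheses for $\calc_{p,q,\bfl,\bfw}$ and, with $k$ held fixed, for $\calc_{k,p,\bfl,\bfw}$. Simple connectivity and the fixed dimension $7$ are already available from Proposition \ref{elemtopprop}.

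Next the cohomology ring. Each member of $\calc_{p,q,\bfl,\bfw}$ is a smooth join $L_{p,q}\star_\bfl S^3_\bfw$ with $\gcd(p,q)=1$, so $L_{p,q}$ is $S^5$; likewise each member of $\calc_{k,p,\bfl,\bfw}$ is $L_{k,p}\star_\bfl S^3_\bfw$ with $L_{k,p}$ diffeomorphic to $S^5$ under the coprimality hypotheses. Hence Proposition \ref{mainpropexam} (see also Remark \ref{highsphrem}) applies and gives the integral cohomology ring $\bbz[x,y]/(w^0w^\infty(l^0)^2x^2,x^3,x^2y,y^2)$ with $|x|=2$, $|y|=5$. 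Since $w^0w^\infty(l^0)^2$ is held fixed along each collection, all members share this ring; the collections are nonetheless infinite because $l^\infty$ and $p,q$ (respectively $p$, with $k$ fixed) are unbounded and do not enter the ring. The rational Pontryagin classes are immediate: $H^4(\cdot;\bbz)=\bbz_{w^0w^\infty(l^0)^2}$ is finite, so $H^4(\cdot;\bbq)=0$, and since the manifolds have dimension $7$ this forces every rational Pontryagin class to vanish, so they trivially agree across each collection.

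The one point needing real work is formality --- more precisely, the assertion that the minimal model is determined by the ring above. Rationally the ring is $\bbq[x,y]/(x^2,y^2)$, the cohomology ring of $S^2\times S^5$, and I would show it is intrinsically formal. The minimal model it forces is pinned down without choices: a closed degree-$2$ generator $a\mapsto x$; a degree-$3$ generator $b$ with $db=a^2$ to kill $a^2$ (the only closed degree-$5$ element of $(\Lambda(a,b),d)$ is $0$, so no ambiguity); a closed degree-$5$ generator $c\mapsto y$; and then $(\Lambda(a,b,c),d)$ with $da=dc=0$, $db=a^2$ --- the minimal model of $S^2\times S^5$ --- already has the correct cohomology in every remaining degree, so no further generators are needed. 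This model is formal via the obvious quasi-isomorphism to $(H^*,0)$ sending $b\mapsto 0$, $a\mapsto x$, $c\mapsto y$ (note $a^2\mapsto x^2=0$). Hence every simply connected space with this rational cohomology ring is formal and its minimal model is a formal consequence of the ring. This is precisely the verification carried out in the regular case in \cite{BoTo14a}, and it does not depend on $p,q$ or $k$.

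With the three hypotheses in place, the finiteness principle yields the result for $\calc_{p,q,\bfl,\bfw}$, and the identical argument with $k$ held fixed yields it for $\calc_{k,p,\bfl,\bfw}$. I expect the only genuine obstacle to be the formality step --- specifically, being sure there is no Massey-product ambiguity in the minimal model, i.e. that the ring really does determine it --- which the explicit two-stage construction above settles; the rest is bookkeeping with facts already in the excerpt (Proposition \ref{elemtopprop}, Proposition \ref{mainpropexam}, Lemma \ref{7topQ}).
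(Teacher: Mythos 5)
Your proposal is correct and follows the paper's overall strategy (the Sullivan/Wang--Ziller finiteness criterion, the common integral cohomology ring from Proposition \ref{mainpropexam}, and formality from the rational cohomology ring of $S^2\times S^5$ via Lemma \ref{7topQ}), but it diverges from the paper in the one step where the two collections are actually distinguished: the rational Pontryagin classes. The paper computes $p_1(M_{\bfl,\bfw})$ as the pullback of $p_1$ of the orbifold quotient $(N,\grD)\times\bbc\bbp^1[\bfw]$ and then invokes the combinatorial invariance of rational Pontryagin classes (Rokhlin--\v{S}varc, Thom) to argue that these agree whenever the underlying topological quotient spaces are combinatorially equivalent; it is precisely this step that forces $k$ to be fixed in $\calc_{k,p,\bfl,\bfw}$, since by Lemma \ref{quotorb52} the quotient singularities depend on $k$. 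You instead observe that $H^4(M_{\bfl,\bfw};\bbz)\cong\bbz_{w^0w^\infty(l^0)^2}$ is finite, so $H^4(M_{\bfl,\bfw};\bbq)=0$ and (as $H^{4i}=0$ for $i\ge 2$ on a $7$-manifold) all rational Pontryagin classes vanish identically, making the agreement automatic. This is a legitimate and strictly simpler argument in dimension $7$; it even renders the ``fixed $k$'' hypothesis unnecessary for the conclusion as stated, whereas the paper's geometric computation is what one would need in situations (e.g.\ the higher-dimensional joins of Remark \ref{highsphrem}) where $H^4$ is no longer pure torsion and the Pontryagin classes carry real information. Your explicit verification that the ring $\bbq[x,y]/(x^2,y^2)$ is intrinsically formal (no Massey-product ambiguity in the two-stage minimal model $\Lambda(a,b,c)$, $db=a^2$) is also more detailed than the paper's one-line appeal to formality, and is sound.
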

 
\begin{proof}
Since $w^0w^\infty(l^0)^2$ is a fixed odd number within each collection, the cohomology rings coincide. So according to Sullivan's theory we need only check that the first Pontrjagin class is fixed within each collection, and that the join is formal in each case. By Lemma \ref{7topQ} all the elements of each collection have the rational cohomology ring of $S^2\times S^5$, so they are formal. To compute the Pontrjagin class we make note of the map $\pi$ in the Commutative Diagram \eqref{orbifibrationexactseq}. Since the vertical bundle is trivial, $p_1(M_{\bfl,\bfw})$ is the pullback of $p_1(\mathsf{B}(N,\grD)\times\mathsf{B}\bbc\bbp^1[\bfw])$. This is well defined since the underlying topological space $(N,\grD)\times \bbc\bbp^1$ is a combinatorial manifold determined up to combinatorial equivalence \cite{RoSv57,Tho58}. Then it follows from Lemma \ref{quotorb5} that for the collection $\calc_{p,q,\bfl,\bfw}$ the underlying topological space is independent of $p,q,l^\infty$, so the first statement follows. In the case of the collection $\calc_{k,p,\bfl,\bfw}$ Lemma \ref{quotorb52} tells us that the quotient singularity of the underlying topological space depends on $k$, so we should fix $k$ for the underlying spaces to be combinatorially equivalent.
\end{proof}

From Lemma \ref{c1orbNpq} and Proposition \ref{rathomprop} we obtain
\begin{corollary}\label{infcon}
For each odd natural number of the form $w^0w^\infty(l^0)^2$ there exists a simply connected smooth 7-manifold with infinitely many inequivalent contact structures of Sasaki type which admit cscS metrics.
\end{corollary}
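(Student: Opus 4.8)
The plan is to feed the finiteness statement of Proposition~\ref{rathomprop} into a pigeonhole argument and then separate contact structures by the first Chern class of the contact bundle. Fix an odd natural number $N_0$ and a representation $N_0=w^0w^\infty(l^0)^2$ with $\gcd(w^0,w^\infty)=1$ (relabelling by $\bfw\mapsto\bfw^\perp$ if necessary so that $w^0\ge w^\infty$); only finitely many such representations exist, so it suffices to treat one. First I would produce an infinite family of admissible data. Taking coprime pairs $(p,q)$ with $2p>q$ and $2q>p$ --- for instance $(p,q)=(n+1,n)$ with $n\ge 2$, infinitely many of which also satisfy $\gcd(pq,l^0w^0w^\infty)=1$ --- together with any $l^\infty$ satisfying $\gcd(l^\infty,l^0w^0w^\infty)=1$, Theorem~\ref{pqw7man} (equivalently Lemma~\ref{smoothlem}, using that $N_0$ odd makes $l^0w^0w^\infty$ odd, so that $\gcd(2l^\infty pq,l^0w^0w^\infty)=1$) shows that $M_{p,q,\bfl,\bfw}=L_{p,q}\star_\bfl S^3_\bfw$, with $\bfl=(l^0,l^\infty)$, is a smooth $7$-manifold carrying a cscS ray in its $\bfw$-Sasaki cone $\gt^+_\bfw$; moreover $L_{p,q}$ is a homotopy $5$-sphere since $\gcd(p,q)=1$, so by Proposition~\ref{elemtopprop} the join is simply connected with $H^2(M_{p,q,\bfl,\bfw},\bbz)\cong\bbz$ (via the universal coefficient theorem), and by Proposition~\ref{mainpropexam} its cohomology ring is $\bbz[x,y]/(N_0x^2,x^3,x^2y,y^2)$. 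This family is infinite, and since $w^0w^\infty(l^0)^2=N_0$ and $k=\gcd(p,q)-1=0$ are fixed throughout, it meets only finitely many diffeomorphism types by Proposition~\ref{rathomprop}. By pigeonhole, some single smooth $7$-manifold $M$ is the diffeomorphism type of infinitely many members $M_{p_i,q_i,\bfl_i,\bfw}$, $i\in\bbn$, each carrying a contact structure of Sasaki type $\cald_{p_i,q_i,\bfl_i,\bfw}$ that admits a cscS metric.

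It remains to see that these contact structures on $M$ are pairwise inequivalent for infinitely many $i$. By Lemma~\ref{c1orbNpq}, $c_1(\cald_{p,q,\bfl,\bfw})=\bigl(2l^\infty(p+q)-l^0(w^0+w^\infty)\bigr)\grg$ with $\grg$ a generator of $H^2(M_{p,q,\bfl,\bfw},\bbz)\cong\bbz$, and $c_1$ of the contact bundle is a contact invariant: any contactomorphism of $M$ pulls it back, and $\Aut(H^2(M,\bbz))=\Aut(\bbz)=\{\pm 1\}$, so the nonnegative integer $|c_1(\cald)|$ is a genuine invariant of the contact structure. If only finitely many values of $\bigl|2l^\infty_i(p_i+q_i)-l^0(w^0+w^\infty)\bigr|$ occurred over the infinite index set, then $l^\infty_i(p_i+q_i)$ would be bounded; but for any bound $B$ there are only finitely many triples of positive integers $(l^\infty,p,q)$ with $l^\infty(p+q)\le B$, contradicting the infinitude of the sub-collection. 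Hence $|c_1(\cald_{p_i,q_i,\bfl_i,\bfw})|$ takes infinitely many values, so infinitely many of these contact structures of Sasaki type on $M$ are pairwise inequivalent, and each admits a cscS metric by Theorem~\ref{pqw7man}, which proves the corollary.

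I expect the last step to be the main obstacle --- not for depth but for care. The diffeomorphism type $M$ arises non-constructively from pigeonhole, so one cannot simply enumerate the contact invariants of its members; instead one argues indirectly that boundedness of $|c_1|$ on the chosen sub-collection would force boundedness of $l^\infty(p+q)$, hence finiteness of that sub-collection. One must also be sure that the Chern class invoked is a diffeomorphism/contactomorphism invariant of $M$ itself, not merely of the individual join presentations, which is precisely why the identification $H^2(M,\bbz)\cong\bbz$ --- forcing the automorphism group to be $\{\pm1\}$ --- is essential. Everything else (smoothness of the join, the cohomology ring, the cscS metric in $\gt^+_\bfw$, and the finiteness of diffeomorphism types in the collection) is furnished directly by Lemma~\ref{smoothlem}, Proposition~\ref{mainpropexam}, Theorem~\ref{pqw7man}, and Proposition~\ref{rathomprop}.
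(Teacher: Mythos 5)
Your proposal is correct and follows essentially the same route the paper intends: the paper derives Corollary \ref{infcon} in one line from Lemma \ref{c1orbNpq} and Proposition \ref{rathomprop}, i.e.\ finiteness of diffeomorphism types plus pigeonhole, with the contact structures separated by $|c_1(\cald_{p,q,\bfl,\bfw})|=|2l^\infty(p+q)-l^0|\bfw||$ in $H^2\cong\bbz$. Your additional care --- verifying the smoothness condition via oddness of $l^0w^0w^\infty$, and showing that boundedness of $|c_1|$ on the recurring subfamily would force finiteness of the admissible triples $(l^\infty,p,q)$ --- simply makes explicit what the paper leaves implicit.
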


These results can be compared to a very similar result in our unpublished manuscript \cite{BoTo14b} in which the contact structures are toric; whereas, those above have complexity one or two.

\def\cprime{$'$} \def\cprime{$'$} \def\cprime{$'$} \def\cprime{$'$}
  \def\cprime{$'$} \def\cprime{$'$} \def\cprime{$'$} \def\cprime{$'$}
  \def\cdprime{$''$} \def\cprime{$'$} \def\cprime{$'$} \def\cprime{$'$}
  \def\cprime{$'$}
\providecommand{\bysame}{\leavevmode\hbox to3em{\hrulefill}\thinspace}
\providecommand{\MR}{\relax\ifhmode\unskip\space\fi MR }
% \MRhref is called by the amsart/book/proc definition of \MR.
\providecommand{\MRhref}[2]{%
  \href{http://www.ams.org/mathscinet-getitem?mr=#1}{#2}
}
\providecommand{\href}[2]{#2}


\begin{thebibliography}{ACGTF11}

\bibitem[AC21]{ApCa18}
Vestislav Apostolov and David M.~J. Calderbank, \emph{The {CR} geometry of
  weighted extremal {K}\"{a}hler and {S}asaki metrics}, Math. Ann. \textbf{379}
  (2021), no.~3-4, 1047--1088. \MR{4238260}

\bibitem[ACG06]{ApCaGa06}
Vestislav Apostolov, David M.~J. Calderbank, and Paul Gauduchon,
  \emph{Hamiltonian 2-forms in {K}\"ahler geometry. {I}. {G}eneral theory}, J.
  Differential Geom. \textbf{73} (2006), no.~3, 359--412. \MR{2228318
  (2007b:53149)}

\bibitem[ACGL20]{ACGL17}
Vestislav Apostolov, David M.~J. Calderbank, Paul Gauduchon, and Eveline
  Legendre, \emph{Levi-{K}\"{a}hler reduction of {CR} structures, products of
  spheres, and toric geometry}, Math. Res. Lett. \textbf{27} (2020), no.~6,
  1565--1629. \MR{4216597}

\bibitem[ACGTF04]{ACGT04}
V.~Apostolov, D.~M.~J. Calderbank, P.~Gauduchon, and C.~W.
  T{\o}nnesen-Friedman, \emph{Hamiltonian $2$-forms in {K}\"ahler geometry.
  {II}. {G}lobal classification}, J. Differential Geom. \textbf{68} (2004),
  no.~2, 277--345. \MR{2144249}

\bibitem[ACGTF08]{ACGT08}
Vestislav Apostolov, David M.~J. Calderbank, Paul Gauduchon, and Christina~W.
  T{\o}nnesen-Friedman, \emph{Hamiltonian 2-forms in {K}\"ahler geometry.
  {III}. {E}xtremal metrics and stability}, Invent. Math. \textbf{173} (2008),
  no.~3, 547--601. \MR{MR2425136 (2009m:32043)}

\bibitem[ACGTF11]{ACGT11}
\bysame, \emph{Extremal {K}\"ahler metrics on projective bundles over a curve},
  Adv. Math. \textbf{227} (2011), 2385--2424.

\bibitem[AH06]{AlHa06}
Klaus Altmann and J\"urgen Hausen, \emph{Polyhedral divisors and algebraic
  torus actions}, Math. Ann. \textbf{334} (2006), no.~3, 557--607. \MR{2207875}

\bibitem[Bel01]{Bel01}
F.~A. Belgun, \emph{Normal {CR} structures on compact 3-manifolds}, Math. Z.
  \textbf{238} (2001), no.~3, 441--460. \MR{2002k:32065}

\bibitem[BG00a]{BG00b}
Charles~P. Boyer and Krzysztof Galicki, \emph{A note on toric contact
  geometry}, J. Geom. Phys. \textbf{35} (2000), no.~4, 288--298. \MR{MR1780757
  (2001h:53124)}

\bibitem[BG00b]{BG00a}
\bysame, \emph{On {S}asakian-{E}instein geometry}, Internat. J. Math.
  \textbf{11} (2000), no.~7, 873--909. \MR{1792957}

\bibitem[BG08]{BG05}
\bysame, \emph{Sasakian geometry}, Oxford Mathematical Monographs, Oxford
  University Press, Oxford, 2008. \MR{MR2382957 (2009c:53058)}

\bibitem[BGK05]{BGK05}
C.~P. Boyer, K.~Galicki, and J.~Koll\'ar, \emph{Einstein metrics on spheres},
  Ann. of Math. (2) \textbf{162} (2005), no.~1, 557--580. \MR{2178969
  (2006j:53058)}

\bibitem[BGN03]{BGN03b}
C.~P. Boyer, K.~Galicki, and M.~Nakamaye, \emph{Sasakian geometry, homotopy
  spheres and positive {R}icci curvature}, Topology \textbf{42} (2003), no.~5,
  981--1002. \MR{1 978 045}

\bibitem[BGO07]{BGO06}
Charles~P. Boyer, Krzysztof Galicki, and Liviu Ornea, \emph{Constructions in
  {S}asakian geometry}, Math. Z. \textbf{257} (2007), no.~4, 907--924.
  \MR{MR2342558 (2008m:53103)}

\bibitem[BGS08]{BGS06}
Charles~P. Boyer, Krzysztof Galicki, and Santiago~R. Simanca, \emph{Canonical
  {S}asakian metrics}, Commun. Math. Phys. \textbf{279} (2008), no.~3,
  705--733. \MR{MR2386725}

\bibitem[BHLTF18]{BHLT16}
Charles~P. Boyer, Hongnian Huang, Eveline Legendre, and Christina~W.
  T{\o}nnesen-Friedman, \emph{Reducibility in {S}asakian geometry}, Trans.
  Amer. Math. Soc. \textbf{370} (2018), no.~10, 6825--6869. \MR{3841834}

\bibitem[BHLTF21]{BHLT19}
Charles~P. Boyer, Hongnian Huang, Eveline Legendre, and Christina~W.
  T{\o}nnesen-Friedman, \emph{Some open problems in {S}asaki geometry},
  Differential Geometry in the Large, Lecture Note Series, vol. 463, London
  Mathematical Society, Cambridge University Press, 2021, pp.~143--168.

\bibitem[BM93]{BM93}
A.~Banyaga and P.~Molino, \emph{G\'eom\'etrie des formes de contact
  compl\`etement int\'egrables de type toriques}, S\'eminaire Gaston Darboux de
  G\'eom\'etrie et Topologie Diff\'erentielle, 1991--1992 (Montpellier), Univ.
  Montpellier II, Montpellier, 1993, pp.~1--25. \MR{94e:53029}

\bibitem[Boy19]{Boy18}
Charles~P. Boyer, \emph{Contact structures of {S}asaki type and their
  associated moduli}, Complex Manifolds \textbf{6} (2019), no.~1, 1--30.
  \MR{3912431}

\bibitem[BP14]{BoPa10}
Charles~P. Boyer and Justin Pati, \emph{On the equivalence problem for toric
  contact structures on {$S^3$}-bundles over {$S^2$}}, Pacific J. Math.
  \textbf{267} (2014), no.~2, 277--324. \MR{3207586}

\bibitem[BT82]{BoTu82}
R.~Bott and L.~W. Tu, \emph{Differential forms in algebraic topology}, Graduate
  Texts in Mathematics, vol.~82, Springer-Verlag, New York, 1982. \MR{658304
  (83i:57016)}

\bibitem[BTF14a]{BoTo13}
Charles~P. Boyer and Christina~W. T{\o}nnesen-Friedman, \emph{Extremal
  {S}asakian geometry on {$S^3$}-bundles over {R}iemann surfaces}, Int. Math.
  Res. Not. IMRN (2014), no.~20, 5510--5562. \MR{3271180}

\bibitem[BTF14b]{BoTo14b}
\bysame, \emph{Simply connected manifolds with infinitely many toric contact
  structures and constant scalar curvature {S}asaki metrics}, preprint;
  arXiv:1404.3999 (2014).

\bibitem[BTF15]{BoTo14P}
\bysame, \emph{The {S}asaki join and admissible {K}\"ahler constructions}, J.
  Geom. Phys. \textbf{91} (2015), 29--39. \MR{3327046}

\bibitem[BTF16]{BoTo14a}
\bysame, \emph{The {S}asaki join, {H}amiltonian 2-forms, and constant scalar
  curvature}, J. Geom. Anal. \textbf{26} (2016), no.~2, 1023--1060.
  \MR{3472828}

\bibitem[BTF19]{BoTo18c}
\bysame, \emph{Sasaki-{E}instein metrics on a class of 7-manifolds}, J. Geom.
  Phys. \textbf{140} (2019), 111--124. \MR{3923473}

\bibitem[BTF20a]{BoTo19b}
\bysame, \emph{Iterated ${S}^3$ {S}asaki joins and {B}ott orbifolds},
  arXiv:2006.06596, to appear in {A}nnales {S}cientifiques de la {F}acult\'e de
  {T}oulouse. (2020).

\bibitem[BTF20b]{BoTo15}
\bysame, \emph{On positivity in {S}asaki geometry}, Geom. Dedicata \textbf{204}
  (2020), 149--164. \MR{4056696}

\bibitem[BvC18]{BovCo16}
Charles~P. Boyer and Craig van Coevering, \emph{Relative {K}-stability and
  extremal {S}asaki metrics}, Math. Res. Lett. \textbf{25} (2018), no.~1,
  1--19.

\bibitem[CS18]{CoSz12}
Tristan~C. Collins and G\'abor Sz\'ekelyhidi, \emph{K-semistability for
  irregular {S}asakian manifolds}, J. Differential Geom. \textbf{109} (2018),
  no.~1, 81--109. \MR{3798716}

\bibitem[CS19]{CoSz15}
Tristan~C. Collins and G\'{a}bor Sz\'{e}kelyhidi, \emph{Sasaki-{E}instein
  metrics and {K}-stability}, Geom. Topol. \textbf{23} (2019), no.~3,
  1339--1413. \MR{3956894}

\bibitem[Don02]{Don02}
S.~K. Donaldson, \emph{Scalar curvature and stability of toric varieties}, J.
  Differential Geom. \textbf{62} (2002), no.~2, 289--349. \MR{MR1988506
  (2005c:32028)}

\bibitem[EKA90]{ElK}
A.~El~Kacimi-Alaoui, \emph{Op\'erateurs transversalement elliptiques sur un
  feuilletage riemannien et applications}, Compositio Math. \textbf{73} (1990),
  no.~1, 57--106. \MR{91f:58089}

\bibitem[Gei97]{Gei97}
H.~Geiges, \emph{Normal contact structures on {$3$}-manifolds}, Tohoku Math. J.
  (2) \textbf{49} (1997), no.~3, 415--422. \MR{98h:53046}

\bibitem[GK07]{GhKo05}
Alessandro Ghigi and J{\'a}nos Koll{\'a}r, \emph{K\"ahler-{E}instein metrics on
  orbifolds and {E}instein metrics on spheres}, Comment. Math. Helv.
  \textbf{82} (2007), no.~4, 877--902. \MR{MR2341843 (2008j:32027)}

\bibitem[Gui94]{Gui94b}
V.~Guillemin, \emph{Kaehler structures on toric varieties}, J. Differential
  Geom. \textbf{40} (1994), no.~2, 285--309. \MR{1293656 (95h:32029)}

\bibitem[Hae84]{Hae84}
A.~Haefliger, \emph{Groupo\"\i des d'holonomie et classifiants}, Ast\'erisque
  (1984), no.~116, 70--97, Transversal structure of foliations (Toulouse,
  1982). \MR{86c:57026a}

\bibitem[HS15]{HeSu12b}
Weiyong He and Song Sun, \emph{The generalized {F}rankel conjecture in {S}asaki
  geometry}, Int. Math. Res. Not. IMRN (2015), no.~1, 99--118. \MR{3340296}

\bibitem[Leg11a]{Leg10}
Eveline Legendre, \emph{Existence and non-uniqueness of constant scalar
  curvature toric {S}asaki metrics}, Compos. Math. \textbf{147} (2011), no.~5,
  1613--1634. \MR{2834736}

\bibitem[Leg11b]{Leg09}
\bysame, \emph{Toric geometry of convex quadrilaterals}, J. Symplectic Geom.
  \textbf{9} (2011), no.~3, 343--385. \MR{2817779 (2012g:53079)}

\bibitem[Leg16]{Leg16}
\bysame, \emph{Toric {K}\"ahler-{E}instein metrics and convex compact
  polytopes}, J. Geom. Anal. \textbf{26} (2016), no.~1, 399--427. \MR{3441521}

\bibitem[Ler02]{Ler02a}
E.~Lerman, \emph{Contact toric manifolds}, J. Symplectic Geom. \textbf{1}
  (2002), no.~4, 785--828. \MR{2 039 164}

\bibitem[Ler04]{Ler04}
\bysame, \emph{Homotopy groups of {$K$}-contact toric manifolds}, Trans. Amer.
  Math. Soc. \textbf{356} (2004), no.~10, 4075--4083 (electronic). \MR{2 058
  839}

\bibitem[MSY08]{MaSpYau06}
Dario Martelli, James Sparks, and Shing-Tung Yau, \emph{Sasaki-{E}instein
  manifolds and volume minimisation}, Comm. Math. Phys. \textbf{280} (2008),
  no.~3, 611--673. \MR{MR2399609 (2009d:53054)}

\bibitem[RT11]{RoTh11}
Julius Ross and Richard Thomas, \emph{Weighted projective embeddings, stability
  of orbifolds, and constant scalar curvature {K}\"ahler metrics}, J.
  Differential Geom. \textbf{88} (2011), no.~1, 109--159. \MR{2819757}

\bibitem[Rv57]{RoSv57}
V.~A. Rohlin and A.~S. \v{S}varc, \emph{The combinatorial invariance of
  {P}ontrjagin classes}, Dokl. Akad. Nauk SSSR (N.S.) \textbf{114} (1957),
  490--493. \MR{0102070}

\bibitem[Sul77]{Sul77}
D.~Sullivan, \emph{Infinitesimal computations in topology}, Inst. Hautes
  \'Etudes Sci. Publ. Math. (1977), no.~47, 269--331 (1978). \MR{0646078 (58
  \#31119)}

\bibitem[Tho58]{Tho58}
R.~Thom, \emph{Les classes caract\'{e}ristiques de {P}ontrjagin des
  vari\'{e}t\'{e}s triangul\'{e}es}, Symposium internacional de topolog\'{\i}a
  algebraica {I}nternational symposium on algebraic topology, Universidad
  Nacional Aut\'{o}noma de M\'{e}xico and UNESCO, Mexico City, 1958,
  pp.~54--67. \MR{0102071}

\bibitem[Tia97]{Tia97}
G.~Tian, \emph{K\"ahler-{E}instein metrics with positive scalar curvature},
  Invent. Math. \textbf{130} (1997), no.~1, 1--37. \MR{99e:53065}

\bibitem[WZ90]{WaZi90}
M.~Y. Wang and W.~Ziller, \emph{Einstein metrics on principal torus bundles},
  J. Differential Geom. \textbf{31} (1990), no.~1, 215--248. \MR{91f:53041}

\end{thebibliography}
\end{document}